\theoremstyle{definition}
\theoremstyle{remark}
\numberwithin{equation}{section}
\def\E{\ifmmode{\mathbb E}\else{$\mathbb E$}\fi} 
\def\N{\ifmmode{\mathbb N}\else{$\mathbb N$}\fi} 
\def\R{\ifmmode{\mathbb R}\else{$\mathbb R$}\fi} 
\def\Q{\ifmmode{\mathbb Q}\else{$\mathbb Q$}\fi} 
\def\C{\ifmmode{\mathbb C}\else{$\mathbb C$}\fi} 
\def\H{\ifmmode{\mathbb H}\else{$\mathbb H$}\fi} 
\def\Z{\ifmmode{\mathbb Z}\else{$\mathbb Z$}\fi} 
\def\P{\ifmmode{\mathbb P}\else{$\mathbb P$}\fi} 
\def\T{\ifmmode{\mathbb T}\else{$\mathbb T$}\fi} 
\def\SS{\ifmmode{\mathbb S}\else{$\mathbb S$}\fi} 
\def\DD{\ifmmode{\mathbb D}\else{$\mathbb D$}\fi} 
\def\dudtau{{\frac{\del u}{\del \tau}}}
\def\dudt{{\frac{\del u}{\del t}}}
\def\delbar{{\overline \partial}}
\newcommand{\del}{\partial}
\def\CA{{\mathcal A}}
\def\CE{{\mathcal E}}
\def\CF{{\mathcal F}}
\def\CL{{\mathcal L}}
\def\CM{{\mathcal M}}
\newcommand{\ben}{\begin{enumerate}}
\newcommand{\een}{\end{enumerate}}
\newcommand{\be}{\begin{equation}}
\newcommand{\ee}{\end{equation}}
\newcommand{\bea}{\begin{eqnarray}}
\newcommand{\eea}{\end{eqnarray}}
\newcommand{\beastar}{\begin{eqnarray*}}
\newcommand{\eeastar}{\end{eqnarray*}}
\newcommand{\bc}{\begin{center}}
\newcommand{\ec}{\end{center}}
\theoremstyle{theorem}
\newtheorem{thm}{Theorem}[section]
\newtheorem{cor}[thm]{Corollary}
\newtheorem{lem}[thm]{Lemma}
\newtheorem{prop}[thm]{Proposition}
\theoremstyle{definition}
\newtheorem{defn}[thm]{Definition}
\newtheorem{rem}[thm]{Remark}
\newtheorem{ques}[thm]{Question}
\newtheorem{sit}[thm]{Situation}
\newtheorem*{thm*}{Theorem}
\numberwithin{equation}{section}
\begin{document}
\quad \vskip1.375truein

\title[Anchored Lagrangian submanifolds]
{Anchored Lagrangian submanifolds \\ and their Floer theory}

\author[K. Fukaya, Y.-G. Oh, H. Ohta, K.
Ono]{Kenji Fukaya, Yong-Geun Oh, Hiroshi Ohta, Kaoru Ono}
\thanks{KF is supported partially by JSPS Grant-in-Aid for Scientific Research
No.18104001 and Global COE Program G08, YO by US NSF grant \# 0503954, HO by JSPS Grant-in-Aid
for Scientific Research No.19340017, and KO by JSPS Grant-in-Aid for
Scientific Research, Nos. 18340014 and 21244002.}
\address{Department of Mathematics, Kyoto
University, Kyoto, Japan } \email{fukaya@math.kyoto-u.ac.jp}
\address{Department of Mathematics, University of
Wisconsin, Madison, WI, USA } \email{oh@math.wisc.edu}
\address{Graduate School of Mathematics,
Nagoya University, Nagoya, Japan } \email{ohta@math.nagoya-u.ac.jp}
\address{Department of Mathematics,
Hokkaido University, Sapporo, Japan }
\email{ono@math.sci.hokudai.ac.jp}

\begin{abstract}
We introduce the notion of (graded) anchored
Lagrangian submanifolds and use it
to study the filtration of Floer' s chain complex.
We then obtain an anchored version of Lagrangian
Floer homology and its (higher) product structures.
They are somewhat different from the more standard non-anchored version.
The anchored version discussed in this paper is more naturally related
to the variational picture of Lagrangian Floer theory and
so to the likes of spectral invariants.
We also discuss rationality of Lagrangian submanifold
and reduction of the coefficient ring of Lagrangian Floer cohomology
of thereof.
\end{abstract}
\keywords{Floer homology, anchored Lagrangian submanifolds, Novikov ring,
Fukaya category, (BS-)rational Lagrangian submanifolds, $N$-rationalization}
\date{July 1, 2009}
\maketitle
\tableofcontents
\section{Introduction}
\label{sec:intro}
Lagrangian Floer theory associates to each given pair of
Lagrangian submanifolds $L_0, L_1 \subset M$ a
group $HF(L_1,L_0)$, called the Floer cohomology group.
Floer cohomology group can be regarded as a
($\infty/2$-dimensional) homology group
of the space of paths $\Omega(L_0,L_1)$ joining
$L_0$ to $L_1$:
\begin{equation}\label{pathspace}
\Omega(L_0,L_1) = \{\ell: [0,1] \to M \mid \ell(0) \in L_0,\, \ell(1) \in L_1\}.
\end{equation}
Floer \cite{floer:intersect} used Morse theory to rigorously define this
cohomology group.
The exterior derivative of the `Morse function' Floer used is the \emph{action one-form} $\alpha$ defined
by
\begin{equation}\label{actiononeform}
\alpha(\ell)(\xi) = \int_0^1 \omega(\dot \ell(t),\xi(t)) \, dt
\end{equation}
for each tangent vector $\xi \in T_\ell\Omega(L_0,L_1)$.
\par
In general the one form $\alpha$ is closed but not necessarily exact.
So one needs to use Novikov's Morse theory \cite{Nov81} of closed one forms.
In order to take care of non-compactness of the moduli space of connecting orbits
which occurs from non-exactness of the closed one form involved,
Novikov uses a kind of formal power series ring, the so called Novikov ring
for his Morse theory of closed one forms.
\par
Floer, and later Hofer-Salamon \cite{HoSa95} and the fourth named author \cite{Ono96},
used a similar Novikov ring for Floer homology of
periodic Hamiltonian system.
The present authors also used a Novikov ring to study
Lagrangian Floer homology in \cite{fooo00}.
They however introduced a slightly different
ring which they call {\it universal Novikov ring}.
The same universal Novikov ring was used in \cite{Fuk02II} to
associate a filtered $A_{\infty}$ category
(Fukaya category) to a symplectic
manifold, which combine Floer cohomologies
of various pairs of Lagrangian submanifolds, together with
their (higher) product structures.
\par
In Section 5.1 \cite{fooo08}, the relationship between the
Floer cohomology over a (traditional) Novikov ring and the one over the
universal Novikov ring is discussed, which concerns pairs of
Lagrangian submanifolds. The discussion thereof involves a systematic
choice of associating base points on the connected components of $\Omega(L_0,L_1)$
when the pair $(L_0,L_1)$ varies. In this paper we extend this to the
cases of three or more Lagrangian submanifolds, which enter in
the product structure of Floer cohomology.
\par
We remark that the closed one form $\alpha$ above, determines a
single-valued function on an appropriate covering space of
$\Omega(L_0,L_1)$ {\it up to addition of a constant}. The choice of
this additive constant, which is closely related to the choice of a
base point, determines the filtration of Floer cohomology. When more
than two Lagrangian submanifolds are involved, to equip filtrations
of the Floer cohomologies `in a consistent way' for all pairs
$(L_0,L_1)$ is a somewhat nontrivial problem. The problem of finding
a systematic choice of the base point for the filtration shares some
similarity with the corresponding problems for the degree
(dimension) and for the orientation of the moduli space of
pseudo-holomorphic strips or polygons. (See Definition
\ref{abstractindex}.)
\par
For the purpose of systematically finding the base points of the path spaces,
we use the notion of \emph{anchored Lagrangian submanifold}.

\begin{defn}[Anchored Lagrangian submanifolds]\label{secanchored}
Fix a base point $y \in M$.
An \emph{anchor} of a Lagrangian submanifold $L \subset M$ to $y$ is a path
$\gamma : [0,1] \to M$ such that $\gamma(0) = y, \, \gamma(1) \in L$.
A pair $(L,\gamma)$ is called an anchored Lagrangian submanifold.
\end{defn}
\par

Roughly speaking, Floer cohomology group is a cohomology
group of a chain complex $CF(L_1,L_0)$ which is generated by
the set of intersections $L_0 \cap L_1$ and
whose boundary operator $\partial$ is defined by `counting' the
number of solutions $u: \R \times [0,1] \to M$ of
the (nonlinear) Cauchy-Riemann equation
\be\label{eq:CR}
\begin{cases} \dudtau + J \dudt = 0\\
u(\tau,0) \in L_0, \quad u(\tau,1) \in L_1.
\end{cases}
\ee
The moduli space of pseudo-holomorphic strips entering in this counting
problem is an appropriate compactification of the solution space.
\par
To properly defined the Floer cohomology group, we need to study:
\begin{enumerate}
\item {(Filtration):} a filtration of the Floer's chain complex $CF(L_1,L_0)$
\item {($\Z$-Grading):} a $\Z$-grading with respect to which $\partial$ has degree 1
\item {(Sign):} a sign on the generators which induces a
$\Z$-module (or at least $\Q$ vector space) structure on
$CF(L_1,L_0)$ with respect to which $\partial$ is a $\Z$-module
(resp. a $\Q$ vector space) homomorphism.
\end{enumerate}
In all of these structures, the relative version, i.e., the
`difference' between two generators $q, \, p \in L_0 \cap L_1$ is
canonically defined : for (1) it is the symplectic area, for (2) it
is nothing but the so called Maslov-Viterbo index
\cite{viterbo,floer:index} and for (3) it is based on the choice
of orientation of the determinant bundle $\det D_u\delbar \to
\CM(p,q;L_0,L_1)$ at a solution $u$ of \eqref{eq:CR}.
More precisely, the gluing formula for the indices shares a similar
behavior with the problems on (1) and (2).
(See Remarks \ref{connorbori} and \ref{fooo8-5}.)
Denote any of
these invariants associated to the Floer trajectory $u$ by
$I(q,p;u)$. The main problem to solve to provide these structures
then is to see if there exists some family of functions $I = I(q)$
\emph{independent of} the choice of $u \in \CM(p,q;L_0,L_1)$ such
that \be\label{eq:I(q)} I(q,p;u) = I(p) - I(q). \ee This is not
possible in general unless one puts various restrictions on the
triple $(L_0,L_1;M)$: for (1) exactness of $(M,\omega)$ and of
$(L_0,L_1)$, for (2) vanishing of $c_1$ of $(M,\omega)$ and of the
associated Maslov indices of $L_0, \, L_1$ and for (3) spinness of
the pair $L_0, \, L_1$ or (more generally \emph{relative spinness}
of the pair $(L_0,L_1)$). Under these restrictions respectively, it
has been well understood by now that such a choice is always
possible. See \cite{floer:intersect}, \cite{seidel:top} for (1),
\cite{floer:index} and \cite{fooo00} for (2) and \cite{fooo00} for
(3) respectively. (See also \cite{Fuk02II} where (1), (2) and (3)
are described in the setting of Fukaya category. There are some
technical errors and/or inconsistency with \cite{fooo08}, in the
description of \cite{Fuk02II}, which are corrected in this paper.)
\par
In this paper we define a filtered $A_{\infty}$ category on
each symplectic manifold, which is an anchored version
of Fukaya category. (See Theorem \ref{anchoredAinfty}.)
Its objects are anchored Lagrangian submanifolds $(L,\gamma)$
equipped with some extra data: (bounding cochain, spin structure and grading.)
The morphism is an (anchored version of) Floer's chain complex.
We however emphasize that the cohomology group $HF((L_1,\gamma_1),(L_0,\gamma_0))$
is {\it different} from usual Floer cohomology group
$HF(L_1,L_0)$ which is defined in \cite{fooo00,fooo06}. Namely the former is a component
of the latter where only one of the connected components of
$\Omega(L_0,L_1)$ is used for the construction.
The anchored versions of (higher) compositions $\frak m_k$ are also different from
the usual one.
The precise relationship between the anchored version and the non-anchored one
is rather complicate to describe.
\par
Necessity of studying this non-canonicality of filtration appears
in several situations: one is in the construction of Seidel's long
exact sequence as studied in \cite{oh:seidel} and the other is in the
study of \emph{Galois symmetry} in Floer homology \cite{fukaya:Galois}.
\par
Leaving the first problem to \cite{oh:seidel},
we will discuss the latter problem in Section \ref{sec:Galois} of this paper.
This involves the detailed discussion of the universal Novikov ring.
An element of the universal Novikov ring has the form
\be\label{eq:Novikov}
\sum_i a_i e^{\mu_i}T^{\lambda_i}
\ee
which is either a finite sum or an infinite sum with
$
\lambda_i \le \lambda_{i+1}, \, \lim_{i \to \infty} \lambda_i = + \infty.
$
Here $a_i $ is an element of a ground ring $R$ (for example $R=\Q$) and $\lambda_i$
are real numbers.
We consider the subring consisting of elements
(\ref{eq:Novikov}) such that $\lambda_i \in \Q$ in addition.
We denote it by $\Lambda_{0,nov}^{\text{\rm rat}}$.
We say that a Lagrangian submanifold $L$ of a symplectic manifold $M$
is \emph{rational} if the subgroup
$$
\Gamma_\omega(M,L) = \{\omega(\alpha) \mid \alpha \in \pi_2(M,L)\} \subset \R
$$
is discrete.
\par
Now we assume $[\omega] \in H^2(M;\Q)$. Then there exists
$m_{\text{\rm amb}} \in \Z_+$ and a complex line bundle
$\mathcal P$ with connection $\nabla$ such that
the curvature form of $(\mathcal P,\nabla)$ is
$2\pi \sqrt{-1} m_{\text{\rm amb}}\omega$.
We call $(\mathcal P,\nabla)$ the {\it pre-quantum bundle}.
A Lagrangian submanifold $L$ is called \emph{Bohr-Sommerfeld rational} or simply
{\it BS-rational} if holonomy group of the restriction of $(\mathcal P,\nabla)$
to $L$ is of finite order. (Such a Lagrangian submanifold is called `cyclic' in \cite{oh:cyclic} and just
`rational' in \cite{fukaya:Galois}. Since a Lagrangian submanifold
$L$ is called a Bohr-Sommerfeld orbit when the holonomy group is trivial,
the name `BS-rational' seems to be a more reasonable choice.)

\begin{thm}\label{GStheorem}
To each $(M,\omega)$ with $[\omega] \in H^2(M;\Q)$ with $c_1(M) =
0$, we can associate a filtered $A_{\infty}$ category with
$\Lambda_{nov}^{\text{\rm rat}}$ coefficients. Its object consists
of a system $(L,\mathcal L,sp,b,s,S_L)$ where $L$ is a BS-rational
Lagrangian submanifold of $M$ with its Maslov class $\mu_L = 0$,
$\mathcal L$ is a flat $U(1)$ bundle on $L$ with finite holonomy
group, $sp$ is a spin structure of $L$, $b$ is a bounding cochain,
$s$ is a $\Z$-grading, and $S_L$ is a rationalization of $L$. We
denote this category by ${\mathcal Fuk}_{\text{\rm rat}}(M,\omega)$.
\par
If $m_{\text{\rm amb}}[\omega] \in H^2(M;\Z)$, then
there exists a $m_{\text{\rm amb}}\widehat{\Z}$ action on this category which is compatible with the
$\widehat{\Z}$ action of $\Lambda_{0,nov}^{\text{\rm rat}}$ as
continuous Galois group.
\end{thm}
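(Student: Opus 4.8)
The strategy is to begin with the anchored Fukaya category over the universal Novikov ring $\Lambda_{0,nov}$ furnished by Theorem \ref{anchoredAinfty}, to show that under the standing hypotheses every structure map $\mathfrak{m}_k$ already takes its values in the rational subring $\Lambda_{0,nov}^{\text{\rm rat}}$, and then to deduce Part 2 by constructing an explicit semilinear action of the continuous Galois group. The grading and sign parts require nothing new: when $c_1(M)=0$ and $\mu_L=0$, the Maslov--Viterbo index admits an absolute $\Z$-valued refinement compatible with gluing (this is exactly the situation of item (2) in the Introduction and Definition \ref{abstractindex}), so the datum $s$ determines an honest $\Z$-grading for which each $\mathfrak{m}_k$ has the expected degree, and the spin structures $sp$ orient the relevant moduli spaces precisely as in the non-anchored theory. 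The new content is the behaviour of the $T$-exponents, i.e.\ of the filtration.

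For a single object, write $N_L$ for the order of the holonomy group of $\nabla|_L$; applying Stokes' theorem to the pre-quantum curvature $2\pi\sqrt{-1}\,m_{\text{\rm amb}}\omega$ along a disc $w\in\pi_2(M,L)$ gives $\exp\!\big(2\pi\sqrt{-1}\,m_{\text{\rm amb}}N_L\,\omega(w)\big)=1$, hence $\omega(w)\in\tfrac{1}{m_{\text{\rm amb}}N_L}\Z$, so $\Gamma_\omega(M,L)$ is not merely discrete but contained in $\Q$. I would then propagate this to the polygons contributing to $\mathfrak{m}_k$: by the anchoring bookkeeping of Section 5.1 of \cite{fooo08}, extended from pairs to $(k+1)$-tuples as in the body of this paper, a holomorphic polygon $u$ with boundary on anchored objects $(L_0,\gamma_0),\dots,(L_k,\gamma_k)$ carries a well-defined area $I(p_0,\dots,p_k;u)$, and capping each of its boundary arcs off with the aid of the anchors $\gamma_i$ and the rationalization data $S_{L_i}$ exhibits this area as a sum of disc areas of the kind just considered together with contributions read off from the $S_{L_i}$ and from the finite-order holonomies of the $\mathcal L_i$. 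Each such summand lies in $\tfrac{1}{m}\Z$ for a common denominator $m$ depending only on $m_{\text{\rm amb}}$, the $N_{L_i}$ and the orders of the $\mathcal L_i$, whence $I(p_0,\dots,p_k;u)\in\Q$. Taking the bounding cochains $b$ with coefficients in $\Lambda_{0,nov}^{\text{\rm rat}}$ --- legitimate, since the obstruction and Maurer--Cartan equations are defined over this subring as soon as the $\mathfrak{m}_k$ are --- one concludes that every structure constant of every $\mathfrak{m}_k$ lies in $\Lambda_{0,nov}^{\text{\rm rat}}$. Since $\Lambda_{0,nov}^{\text{\rm rat}}\subset\Lambda_{0,nov}$ is a subring, the $A_\infty$ relations, already valid over $\Lambda_{0,nov}$, hold over $\Lambda_{0,nov}^{\text{\rm rat}}$ verbatim; base-changing to $\Lambda_{nov}^{\text{\rm rat}}$ then produces ${\mathcal Fuk}_{\text{\rm rat}}(M,\omega)$. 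I expect the delicate point here to be checking that the $S_{L_i}$-contributions glue consistently along the edges of a polygon, so that no irrational remainder survives the summation; this is the main obstacle.

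For Part 2, assume in addition $m_{\text{\rm amb}}[\omega]\in H^2(M;\Z)$ and recall from Section \ref{sec:Galois} the continuous action of $\widehat{\Z}=\varprojlim_{n}\Z/n\Z$ on $\Lambda_{0,nov}^{\text{\rm rat}}$ by ring automorphisms, under which $\sigma$ multiplies a monomial $e^{\mu}T^{\lambda}$ (with $\lambda\in\Q$) by the root of unity prescribed by $\sigma$ and $\lambda$. For $\tau\in m_{\text{\rm amb}}\widehat{\Z}$ I would define a filtered $A_\infty$ functor $F_\tau$ on ${\mathcal Fuk}_{\text{\rm rat}}(M,\omega)$ which on objects twists the flat bundle $\mathcal L$ --- equivalently, translates the rationalization $S_L$ --- by the character of $\pi_1(L)$ obtained by pairing $\tau$ against the pre-quantum holonomy of $\nabla|_L$ (the factor $m_{\text{\rm amb}}$ appearing because that holonomy detects areas rescaled by $m_{\text{\rm amb}}$), and on each morphism complex $CF\big((L_1,\gamma_1),(L_0,\gamma_0)\big)$ acts by the identity on the intersection generators and by the above Galois automorphism on Novikov coefficients. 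The decisive step is then to verify that $F_\tau$ intertwines the $\mathfrak{m}_k$: by the area formula above, the contribution of a polygon $u$ to $\mathfrak{m}_k$ has the form $\pm\,e^{\mu(u)}T^{I(\vec p;u)}$ with $I(\vec p;u)\in\Q$ precisely the combination of pre-quantum holonomies by which the object-twist by $\tau$ rescales $u$, so the two rescalings coincide and compatibility of $F_\tau$ with the $\widehat{\Z}$-action on coefficients is immediate. Finally, $\tau\mapsto F_\tau$ is visibly continuous and satisfies $F_\tau\circ F_{\tau'}=F_{\tau+\tau'}$ and $F_0=\mathrm{id}$, so it is an action of the profinite group $m_{\text{\rm amb}}\widehat{\Z}$; that it is an action by filtered $A_\infty$ functors (respecting units, and so on) is formal once the intertwining identity is in hand.
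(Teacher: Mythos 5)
There is a genuine gap at the heart of the reduction step. You claim that the symplectic area of a holomorphic polygon with corners on a chain of BS-rational Lagrangian submanifolds is itself rational, by writing it as a sum of terms each lying in $\frac{1}{m}\Z$. This is true only for discs on a single BS-rational $L$ (that is Lemma \ref{lem:cyclic-rational}); for a strip or polygon with corners $p_{(i+1)i}\in L_i\cap L_{i+1}$ the holonomy argument around the boundary produces, at each corner, the phase discrepancy between the two parallel rationalizing sections $S_{L_i}(p)$ and $S_{L_{i+1}}(p)$, i.e.\ the constants $c(p)$ of \eqref{defcp}, and these are real numbers with no rationality whatsoever. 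Consequently $\int_B\omega$ is not rational in general; only the \emph{normalized} area $E'(B)=\int_B\omega-\sum_i c(p_{(i+1)i})$ of Proposition \ref{prop:E'} lies in $\Z[1/N]$. The paper's proof therefore does not show that the structure constants of $\frak m_k$ in the standard basis $\langle p\rangle$ (or $[p,w]$) are rational — they are not — but instead rescales the generators, $[[p]]=T^{c(p)}\langle p\rangle$, so that the matrix coefficients \eqref{mkcoefficient2} involve $T^{E'(B)}$, and then Proposition \ref{rationality} gives the reduction to $\Lambda_{nov}^{\text{\rm rat}}$. This change of basis (which is also why the rational submodule, and hence the rationalization $S_L$, must be part of the object data) is exactly the point you flag as ``the main obstacle'': the irrational remainders at the corners do survive, and the resolution is not cancellation but absorption into the definition of the generators. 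As written, your plan would leave you with operations whose $T$-exponents are irrational, and both the reduction and the subsequent Galois compatibility (where you let $F_\tau$ act ``by the identity on the intersection generators'') would fail.

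Two further points are glossed over relative to the paper's argument. First, different objects are BS $m$-rational for different $m$, so there is no single denominator: the paper constructs, for each $N$ divisible by $m_{\text{\rm amb}}$, a category ${\mathcal Fuk}_N(M,\omega)$ over $\Q[[T^{1/N}]][T^{-1}]$, compares them by a further rescaling $[[p]]_N\mapsto T^{-\Delta(p)/N'}[[p]]_{N'}$, and defines ${\mathcal Fuk}_{\text{\rm rat}}(M,\omega)$ as the inductive limit; the $m_{\text{\rm amb}}\widehat\Z$ action is likewise assembled from finite-level $m_{\text{\rm amb}}\Z/N\Z$ actions. Your direct definition of $F_\tau$ for $\tau\in m_{\text{\rm amb}}\widehat\Z$ presupposes this limit structure without building it. Second, your object-level description of the action (twisting $\mathcal L$ by the character coming from the pre-quantum holonomy) does agree in spirit with the paper's $(L,\mathcal L,\dots)\mapsto(L,\mathcal L\otimes\mathcal P_L,\dots)$, but verifying that it intertwines $\frak m_k$ with the Galois action on $T^{1/N}$ again requires the exponents to be the normalized ones $E'(B)\in\frac{1}{N}\Z$, so it cannot be separated from the change of basis you omitted. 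Finally, note that Theorem \ref{GStheorem} is the non-anchored statement; starting from the anchored category of Theorem \ref{anchoredAinfty} is a deviation the paper only takes up separately in Subsection \ref{subsec:seredanchor}.
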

We will explain the notions appearing in the theorem in Section \ref{sec:Galois}.
In fact our attempt to further reduce to a smaller ring leads us to
considering a collection of Lagrangian submanifolds for which one can
associate an $A_\infty$ category over a Novikov ring
like $\Q[[T^{1/m}]][T^{-1}][e,e^{-1}]$.
\begin{thm}\label{thm:N-rational}
Let $(M,\omega)$ be rational and $(\mathcal P,\nabla)$ be the
pre-quantum line bundle of $m_{amb}\omega$. Then for each fixed $N
\in \Z_+$, there exists a filtered $A_\infty$ category ${\mathcal
Fuk}_{N}(M,\omega)$ over the ring $\C[[T^{1/N}]][T^{-1}][e,e^{-1}]$:
\begin{enumerate}
\item its objects are $(L, \CL, sp, b,S_L)$ where $L$ is a $N$
BS-rational Lagrangian submanifold, $\CL$ is a flat
complex line bundle with its holonomy group $G(L,\nabla)$
in $\{\exp(2\pi k\sqrt{-1}/N) \mid k \in \Z\}$.
\item
The set of morphisms between two such objects is
$$
CF(L_1, \mathcal L_1,b_1,sp_1,S_{L_1}),(L_0, \mathcal L_0,b_0,sp_0,S_{L_0});\C[[T^{1/N}]][T^{-1}][e,e^{-1}]).
$$
\end{enumerate}
\end{thm}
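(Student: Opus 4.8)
The plan is to obtain $\mathcal{Fuk}_N(M,\omega)$ not by a fresh construction but by restricting the anchored Fukaya category of Theorem~\ref{anchoredAinfty} (and of its rational refinement, Theorem~\ref{GStheorem}) to a suitable class of objects, and then checking that on this class the structure maps $\mathfrak m_k$ already take values in the subring $\C[[T^{1/N}]][T^{-1}][e,e^{-1}]\subset\Lambda_{nov}^{\mathrm{rat}}$. Concretely, I would restrict the objects to anchored Lagrangians $(L,\gamma)$ that are $N$-BS-rational and carry a flat complex line bundle $\mathcal L$ with holonomy in $\{\exp(2\pi k\sqrt{-1}/N)\mid k\in\Z\}$, together with a rationalization $S_L$; everything else — bounding cochains, spin structures, the $A_\infty$ relations, the gauge equivalences — will then simply be inherited.

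First I would fix, using the anchor $\gamma$ together with the rationalization $S_L$, the data pinning down the \emph{absolute} value of the exponents appearing in \eqref{eq:Novikov}: for each pair of such objects and each intersection point $p\in L_0\cap L_1$ one obtains a well-defined real number (an action) and a well-defined integer (a graded Maslov index), as in the systematic choice of base points of Section~5.1 of \cite{fooo08}. Then I would track what a pseudo-holomorphic polygon $u$ contributing to $\mathfrak m_k$ does to these invariants: the power of $T$ in its contribution is $\int u^*\omega$ corrected by the actions of the inputs and the output; the power of $e$ is the Maslov--Viterbo index, an integer; and the coefficient in $\C$ is, besides the sign and the $b_i$-contributions, the product of the holonomies of the $\mathcal L_i$ along the boundary arcs of $u$, which lies in $\C$ because each such holonomy is an $N$-th root of unity.

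The heart of the argument is the quantization of the $T$-exponent. Here I would use the pre-quantum bundle $(\mathcal P,\nabla)$ of $m_{amb}\omega$: the curvature identity expresses $\exp\!\big(2\pi\sqrt{-1}\,m_{amb}\!\int u^*\omega\big)$ as the product of the parallel transports of $\nabla$ along the boundary arcs of $u$, the corner contributions being exactly what the base points built into the $S_L$ are designed to absorb. Since $(\mathcal P,\nabla)|_{L_i}$ has holonomy of order dividing $N$ along loops, each boundary-arc contribution lies in $\{\exp(2\pi k\sqrt{-1}/N)\}$, whence $m_{amb}\!\int u^*\omega\in\frac1N\Z$; normalizing the Novikov parameter so that $T$ records area $1/m_{amb}$, every $T$-exponent occurring in $\mathfrak m_k$ lies in $\frac1N\Z$. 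Gromov compactness bounds these exponents below and makes the set of occurring values discrete, so the sums defining $\mathfrak m_k$ converge in $\C[[T^{1/N}]][T^{-1}][e,e^{-1}]$, and the $A_\infty$ relations of Theorem~\ref{anchoredAinfty} restrict to give those of $\mathcal{Fuk}_N(M,\omega)$.

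I expect the main obstacle to be precisely the passage from \emph{relative} to \emph{absolute} quantization: the identity \eqref{eq:I(q)} only controls differences of actions between two generators, whereas here one needs the individual exponents to lie in $\frac1N\Z$, and this forces a genuinely consistent simultaneous choice of base points on \emph{all} the path spaces $\Omega(L_i,L_j)$ — the very consistency problem the anchoring formalism was introduced to solve — together with a careful match between the corner terms of the pre-quantum holonomy and the normalization encoded in the rationalizations $S_L$. Once that bookkeeping is in place, the remaining verifications are formal.
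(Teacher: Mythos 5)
Your central mechanism is the right one and is in fact the paper's: trivialize $w^*\mathcal P^{\otimes N/m_{\text{\rm amb}}}$ along the boundary arcs by the flat unit-norm sections $S_{L_i}$, absorb the corner mismatches, deduce quantization of the $T$-exponents, and then insert the flat bundles $\mathcal L_i$ through root-of-unity holonomy weights (this is Propositions \ref{prop:E'} and \ref{rationality} and the construction of Section \ref{redbynonanchor}). The genuine gap is in the route you choose to package it. Theorem \ref{thm:N-rational} is the \emph{non-anchored} statement: its objects $(L,\mathcal L,sp,b,S_L)$ carry no anchor, and its morphism module is the full Floer module generated by all intersection points. Restricting the anchored category of Theorem \ref{anchoredAinfty} to $N$-rational objects, as you propose, produces the anchored variant of Subsection \ref{subsec:seredanchor}, whose morphism spaces contain only admissible generators (a single component of $\Omega(L_0,L_1)$) and whose $\frak m_k$ count only admissible polygons; Section \ref{sec:relation} shows this category genuinely differs from the non-anchored one whenever $M$ is not simply connected, and that reassembling the latter from the former is not straightforward. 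Relatedly, your claim that the passage from relative to absolute quantization ``forces a genuinely consistent simultaneous choice of base points'' (i.e.\ anchors) misidentifies where the normalization comes from: in the paper's proof the rationalization $S_L$ alone pins down the constants $c(p)$ of (\ref{defcp}) and hence the rescaled basis $[[p]]=T^{c(p)}\langle p\rangle$ of (\ref{<>and[[]]}); no anchors are used, and the residual dependence on the base path $\ell_{01}$ is only a power of $e$ (Lemma \ref{independetofell01}), which is harmless over $\C[[T^{1/N}]][T^{-1}][e,e^{-1}]$. Anchors enter only in Subsection \ref{subsec:seredanchor}, and there merely to select a canonical $S_N$ by parallel transport from the base point.

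There is also a bookkeeping error in your quantization step. What is quantized is not $m_{\text{\rm amb}}\int u^*\omega$ but the corner-corrected area $E'(B)=\int_B\omega-\sum_i c(p_{(i+1)i})$: applying Stokes to $\mathcal P^{\otimes N/m_{\text{\rm amb}}}$, whose curvature is $2\pi\sqrt{-1}\,N\omega$, and comparing with the corner factors coming from the $S_{L_i}$ gives $E'(B)\in\frac1N\Z$ directly (Proposition \ref{prop:E'}). Consequently no renormalization of the Novikov parameter (``$T$ records area $1/m_{\text{\rm amb}}$'') is needed, and introducing one would rescale the energy filtration by $m_{\text{\rm amb}}$, so the resulting category would not agree with the ring and filtration conventions of the statement, in which $T$ tracks $\omega$-area exactly as elsewhere in the paper. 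Once the exponent is corrected and the construction is carried out on the non-anchored module with basis $[[p]]$, your remaining steps (coefficients in $\C$ via the $N$-th root of unity holonomies, convergence, inheritance of the $A_\infty$ relations) coincide with Proposition \ref{rationality} and complete the proof as in Section \ref{redbynonanchor}.
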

There are also the anchored versions of Theorems \ref{GStheorem}, \ref{thm:N-rational}.
See Subsetion \ref{subsec:seredanchor}.
\par
The category could be empty for some $N$. For example, one necessary
condition for ${\mathcal Fuk}_{N}(M,\omega)$ to be non-empty is
that $N$ should be divided by $m_{\text{\rm amb}}$. This leads us to the notions of
\emph{$N$-rational} Lagrangian submanifolds: $L$ is called \emph{$N$-rational} if
$(\mathcal P^{\otimes N/m_{\text{\rm amb}}},\nabla^{\otimes N/m_{\text{\rm amb}}})|_L$ is trivial.
Then ${\mathcal Fuk}_{N}(M,\omega)$ is generated by $N$-rational
Lagrangian submanifolds for each fixed $N$. The following question seems to
be interesting to study
\begin{ques} Is ${\mathcal Fuk}_{N}(M,\omega)$ generated by a finite number
of objects? More specifically, is the number of the Hamiltonian
isotopy class of compact BS $N$-rational Lagrangian submanifolds
finite?
\end{ques}
It is shown in Section \ref{sec:Galois} that the system $({\mathcal
Fuk}_{N}(M,\omega); <)$ with respect to the partial order `$N < N'$
if and only if $N | N'$' forms an inductive system. By definition,
${\mathcal Fuk}_{\text{\rm rat}}(M,\omega)$ will be the
corresponding inductive limit.
\section{Novikov rings}
\label{subsec:novikov}
The following ring was introduced in [FOOO00] which
plays an important role in the rigorous formulation of
Lagrangian Floer theory.
\begin{defn}[Universal Novikov ring]
Let $R$ be a commutative ring with unit. (In many cases, we take $R=\Q$.)
We define
\begin{eqnarray}
\Lambda_{nov} & = & \left\{\sum_{i=1}^\infty a_i T^{\lambda_i}
e^{\mu_i/2} ~\Big|~ a_i \in R, \, \lambda_i \in \R, \, \lambda_i
\leq \lambda_{i+1}, \,
\lim_{i\to \infty}\lambda_i = \infty \right\}\\
\Lambda_{0,nov} & = &\left\{\sum_{i=1}^\infty a_i T^{\lambda_i}
e^{\mu_i/2}\in \Lambda_{nov} ~\Big|~ \lambda_i \geq 0 \right\}.
\end{eqnarray}
\end{defn}
There is a natural filtration on these rings provided by the multiplicative non-Archimedean
valuation defined by
\begin{equation}\label{eq:vlambda1}
v\left(\sum_{i=1}^\infty a_i T^{\lambda_i}e^{\mu_i/2} \right): =
\inf \left\{\lambda_i \mid a_i \ne 0\right\}.
\end{equation}
Here we assume $(\lambda_i,\mu_i) \ne (\lambda_j,\mu_j)$ for $i\ne j$.
\par
(\ref{eq:vlambda1}) is well-defined by the definition of the Novikov ring and
induces a filtration $F^\lambda\Lambda_{nov}: = v^{-1}([\lambda,
\infty))$ on $\Lambda_{nov}$. The function $e^{-v}: \Lambda_{nov}
\to \R_+$ also provides a natural non-Archimedean norm on
$\Lambda_{nov}$.
\par
Let $\overline C$ be a free $R$ module. We consider
$\overline C \otimes_R \Lambda_{nov}$
or $\overline C \otimes_R \Lambda_{0,nov}$.
We define valuation $v$ on it by
$v(\sum x_i \text{\bf e}_i) = \inf v(x_i)$, where $\text{\bf e}_i$
is the basis of $\overline C$. It defines a metric.
We take the completion and denote it by
$\overline C \,\widehat{\otimes}_R \,\Lambda_{nov}$
or $\overline C \,\widehat{\otimes}_R \,\Lambda_{0,nov}$, respectively.
\par
In the point of view of Novikov's Morse theory of closed one forms,
it is natural to use the version of Novikov ring that
is a completion of the group ring of an appropriate quotient group
of the fundamental group of $\Omega(L_0,L_1)$.
This is the point of view taken in many classical references of
various Floer theories. The universal Novikov ring
introduced above is slightly different from this Novikov ring.
In this paper we also use this more traditional Novikov ring,
whose definition is now in order.
\par
We consider the space of paths (\ref{pathspace}),
on which we are given the action one-form $\alpha$ (\ref{actiononeform}).
By definition
$$
\operatorname{Zero}(\alpha) = \{\widehat p: [0,1] \to M \mid p \in
L_0\cap L_1, \,\,\, \widehat p\equiv p \}.
$$
Note that $\Omega(L_0,L_1)$ is not connected but has countably many
connected components. We pick up a based path $\ell_{01} \in
\Omega(L_0,L_1)$ and consider the corresponding component
$\Omega(L_0,L_1;\ell_{01})$. We now review the definition of Novikov
covering we used in Section \ref{subsec:novikov}. Let $ g :
\widetilde{\Omega}(L_0,L_1;\ell_{01}) \to
\widetilde{\Omega}(L_0,L_1;\ell_{01}) $ be an element of deck
transformation group of the universal cover
$\widetilde{\Omega}(L_0,L_1;\ell_{01})$ of
${\Omega}(L_0,L_1;\ell_{01})$. It induces a map $ w : [0,1]^2 \to M
$ with $w(0,t) = \ell_{01}(t) = w(1,t)$, $w(s,0) \in L_0$, $w(s,1)
\in L_1$. (Namely $s \mapsto w(s,\cdot)$ represent the path
corresponding to $g$.) We put
\begin{equation}\label{defEw}
E(g) = \int_{[0,1]^2} w^*\omega.
\end{equation}
We also obtain a Lagrangian loop
$\alpha_{\lambda_{01};\lambda_{01}}$ defined on $\partial [0,1]^2$
by
\begin{equation}\label{condw3}
\aligned &\alpha_{\lambda_{01};\lambda_{01}}(0,t) =
\alpha_{\lambda_{01};\lambda_{01}}(1,t) =\lambda_{01}(t),
\\
&\alpha_{\lambda_{01};\lambda_{01}}(s,0) = T_{w(s,1)}L_0, \quad
\alpha_{\lambda_{01};\lambda_{01}}(s,1) = T_{w(s,1)}L_1.
\endaligned
\end{equation}
Here $\lambda_{01}$ is any path of Lagrangian subspaces along
$\ell_{01}$ with
$$
\lambda_{01}(0) = T_{\ell_{01}(0)}L_0, \quad
\lambda_{01}(1) = T_{\ell_{01}(1)}L_1.
$$
We denote by $\mu(g)$ be the Maslov index of this Lagrangian
loop.
See Section \ref{subsec:bundlepairs} for the definition of Maslov index of this Lagrangian
loop.
We remark that this index does
not depend on the choice of $\lambda_{01}$ and can be expressed as
the index of a bundle pair over the annulus independently of this
choice. (See \cite{fooo00}.)
\begin{defn}\label{novcov}
The Novikov covering is the covering space of
${\Omega}(L_0,L_1;\ell_{01})$ which corresponds to the kernel of the
homomorphism
$$
(E,\mu) : \pi_1({\Omega}(L_0,L_1;\ell_{01})) \to \R \times \Z.
$$
\end{defn}
Since $\Pi(L_0,L_1;\ell_{01})$ is the deck transformation group of
Novikov covering it follows that there exists an (injective) group
homomorphism
\begin{equation}\label{Eandeta}
(E,\mu) : \Pi(L_0,L_1;\ell_{01}) \to \R \times \Z.
\end{equation}
Let $\Lambda_{nov}$ be the field of fraction of $\Lambda_{0,nov}$.
$(E,\mu)$ induces a ring homomorphism
\begin{equation}\label{maptouniN}
\Lambda(L_0,L_1;\ell_{01}) \to \Lambda_{nov}
\end{equation}
by
$$
\sum_g c_g [g] \mapsto \sum c_g e^{\mu(g)/2} T^{E(g)}.
$$
\par
On $\widetilde\Omega(L_0,L_1;\ell_{01})$ we have a unique single
valued action functional $\CA$ such that
$$
d\CA = \pi^*\alpha, \quad \CA([\widetilde\ell_{01}]) = 0
$$
where $[\widetilde\ell_{01}]$ is a base point of $\widetilde \Omega(L_0,L_1;\ell_{01})$.
\par
We then denote by $\Pi(L_0,L_1;\ell_{01})$ the
group of deck transformations. We define the associated Novikov ring
$\Lambda (L_0,L_1;\ell_{01})$ as a completion of the group ring
$\Q[\Pi(L_0,L_1;\ell_{01})]$.
\begin{defn} $\Lambda_k (L_0,L_1;\ell_{01})$ denotes the set of
all (infinite) sums
$$\sum_{g\in \Pi(L_0,L_1;\ell_{01}) \atop \mu (g) = k} a_g
[g]$$ such that $a_g \in \Q$ and that for each $C \in \R$, the set
$$
\# \{ g \in \Pi(L_0,L_1;\ell_{01}) \mid E(g) \leq C, \,\,a_g \not = 0\}
< \infty.
$$
We put $\Lambda (L_0, L_1;\ell_{01}) = \bigoplus_k \Lambda_k (L_0,
L_1;\ell_{01})$.
\end{defn}
We call this graded ring the \emph{Novikov ring} of the pair
$(L_0,L_1)$ relative to the path $\ell_{01}$. Note that this ring
depends on the connected component of $\ell_{01}$.
\section{Anchors and abstract index}
\label{sec:gluinghomotopy}
In this paper we always assume that $L_0$ intersects $L_1$ transversely.
\par
Let $p, \, q \in
L_0 \cap L_1$. We denote by $\pi_2(p,q)=\pi_2(p,q;L_0,L_1)$ the set
of homotopy classes of smooth maps $u: [0,1] \times [0,1] \to M$
relative to the boundary
$$
u(0,t) \equiv p , \quad u(1,t) = q; \quad u(s,0) \in L_0, \quad
u(s,1) \in L_1
$$
and by $[u] \in \pi_2(p,q)$ the homotopy class of $u$ and by $B$ a
general element in $\pi_2(p,q)$. For given $B \in \pi_2(p,q)$, we
denote by ${Map}(p,q;B)$ the set of such $w$'s in class $B$. Each
element $B \in \pi_2(p,q)$ induces a map given by the obvious gluing
map $[p,w] \mapsto [q,w \# u]$ for $u \in Map(p,q;B)$. There is also
the natural gluing map \be\label{eq:pi2pqr} \pi_2(p,q) \times
\pi_2(q,r) \to \pi_2(p,r) \ee induced by the concatenation $(u_1,
u_2) \mapsto u_1\# u_2$. These `relative' homotopy classes are
canonically defined.
\par
On the other hand, if we have chosen a base path $\ell_{01} \in \Omega(L_0,L_1)$,
then we can define the set of path homotopy classes of the maps
$
w:[0,1]^2 \to M
$
satisfying the boundary condition
\begin{equation}\label{condw}
w(0,t) = \ell_{01}(t), \quad w(1,t)\equiv p, \quad w(s,0) \in L_0, \quad w(s,1) \in L_1.
\end{equation}
We denote the corresponding set of homotopy classes of the maps by
$\pi_2(\ell_{01};p)$. Then we have the obvious gluing map
\be\label{eq:pi2pell0}
\pi_2(\ell_{01};p) \times \pi_2(p,q) \to \pi_2(\ell_{01};q); (\alpha,B) \mapsto \alpha \# B.
\ee
\par
Now we would like to generalize this construction for a chain
$
\frak L = (L_0,\cdots, L_k)
$
of more than two Lagrangian submanifolds, i.e., with $k \geq 2$. (We
call such $\frak L$ the {\it Lagrangian chain} and $k+1$ the
\emph{length} of $\frak L$.)
\par
To realize this purpose, we
use the notion of \emph{anchors}
of Lagrangian submanifolds in this paper.

\begin{defn}\label{defn:anchored}
Fix a base point $y$ of ambient symplectic manifold $(M,\omega)$.
Let $L$ be a Lagrangian submanifold of $(M,\omega)$. We define an
\emph{anchor} of $L$ to $y$ is a path $\gamma :[0,1] \to M$ such that
$
\gamma(0) = y, \, \gamma(1) \in L.
$
We call a pair $(L,\gamma)$ an \emph{anchored} Lagrangian submanifold.
\par
A chain $\CE = ((L_0,\gamma_0),\cdots,(L_k,\gamma_k))$
is called an {\it anchored Lagrangian chain}.
$\frak L = (L_0,\cdots, L_k)$ is called
its underlying Lagrangian chain.
\end{defn}

It is easy to see that any homotopy class of path in
$\Omega(L,L')$ can be realized by a path that passes through the
given point $y$. Motivated by this observation, when we are given a
Lagrangian chain
$
(L_0, L_1, \cdots, L_k)
$
we also consider a chain of anchors $\gamma_i: [0,1] \to M$ of $L_i$ to $y$
for $i = 0, \cdots, k$. These anchors give a systematic choice of a base path
$\ell_{ij} \in \Omega(L_i,L_j)$ by concatenating $\gamma_i$ and
$\gamma_j$:
\begin{equation}\label{ellij}
\ell_{ij}(t) =
\begin{cases} \gamma_i(1-2t) &t\le1/2 \\
\gamma_j(2t-1) &t\ge 1/2.
\end{cases}
\end{equation}
The upshot of this construction is the following
overlapping property
\begin{equation}\label{eq:ellij}
\ell_{ij}(t) = \ell_{i\ell}(t) \quad \text{for } \, 0 \leq t
\leq \frac{1}{2}, \qquad
\ell_{ij}(t) = \ell_{\ell j}(t) \quad \text{for }\, \frac{1}{2}
\leq t \leq 1
\end{equation}
for all $j, \, \ell$.
\par
Let $(L_0,\cdots,L_k)$ be a Lagrangian chain and $p_{(i+1)i}
\in L_i \cap L_{i+1}$. ($p_{(k+1)k} = p_{0k}$ and $L_{k+1} = L_0$ as convention.)
We write $\vec p = (p_{10},\cdots,p_{k(k-1)})$.
Let $\chi_{i} = \exp(-2\pi i\sqrt{-1}/k)$.
We consider the set of homotopy class of maps
$v : D^2 \to M$ such that $v(\overline{\chi_{i+1} \chi_{i} }) \subset L_i$
and $v(\chi_i) = p_{i(i+1)}$.
We denote it by $\pi_2(\frak L;\vec p)$.
If $\CE$ is an anchored Lagrangian chain and $\frak L$ be its
underlying Lagrangian chain we write $\pi_2(\CE;\vec p)$
in place of $\pi_2(\frak L;\vec p)$ some times by abuse of notation.
\begin{defn}\label{classB} Let $\CE = \{(L_i,\gamma_i)\}_{0 \leq i \leq k}$ be
a chain of anchored Lagrangian submanifolds.
A homotopy class $B \in \pi_2(\CE;\vec p)$
is called \emph{admissible} to $\CE$ if it
can be obtained by a polygon that is a gluing of $k$ bounding
strips $w_{i(i+1)}^-: [0,1] \times [0,1] \to M$ satisfying
\begin{subequations}\label{wanch9or}
\begin{eqnarray}
w_{i(i+1)}^-(s,0) & \in & L_i, \quad w^-_{i(i+1)}(s,1) \in L_{i+1} \label{3.5form}\\
w_{i(i+1)}^-(0,t) & = & p_{(i+1)i}. \label{3.6form} \\
w_{i(i+1)}^-(1,t) & = & \begin{cases}
\gamma_i(1-2t) \quad & 0 \leq t \leq \frac{1}{2} \\
\gamma_{i+1}(2t-1) \quad & \frac{1}{2} \leq t \leq 1
\end{cases}
\end{eqnarray}
\end{subequations}
When this is the case, we denote the homotopy class $B$ as
$$
B = [w^-_{01}]\#[w^-_{12}] \# \cdots \# [w^-_{k0}]
$$
and the set of admissible homotopy classes by $\pi_2^{ad}(\CE;\vec p)$.
\end{defn}
We note that
not all homotopy classes in $\pi_2(\CE;\vec p)$ is admissible
for a given anchored Lagrangian chain.
(See however Lemma \ref{existanchor2}.)
\begin{defn}
Let $(L_i,\gamma_i)$, $i=0,1$ be anchored Lagrangian submanifolds.
We say $p \in L_0 \cap L_1$ is {\it admissible} (with respect to the
pair $((L_0,\gamma_0),(L_1,\gamma_1))$) if
there exists
$w = w_{01}$ satisfying $(\ref{3.5form})$ for $i=0$ and
$(\ref{3.6form})$ for $i=0$, $p_{10} = p$.
\end{defn}
Note $p$ is admissible if and only if $\pi_2(\ell_{01};p)$ is nonempty.
(Here $\ell_{01}$ is as in (\ref{ellij}).)
Let us go back to the case $k=1$. First note that we have:
\begin{equation}\label{k2pi2compare}
\pi_2(p,q) = \pi_2(\frak L;\vec p)
\end{equation}
where $\frak L = (L_0,L_1)$, $\vec p = (p,q)$ and
the left hand side is as in the beginning of this section.
\begin{lem}\label{admissibilityfor2}
Let $k=1$ and $\CE = ((L_0,\gamma_0),(L_1,\gamma_1))$.
Then
$\pi_2^{ad}(\CE,(p,q))= \pi_2(\CE,(p,q))$ if $p,q$ are admissible.
Otherwise $\pi_2^{ad}(\CE,(p,q))$ is empty.
\end{lem}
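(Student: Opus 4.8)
The plan is to unwind both sides of the claimed equality in the case $k=1$, using the identification \eqref{k2pi2compare} that $\pi_2(p,q)=\pi_2(\frak L;\vec p)$ together with the gluing map \eqref{eq:pi2pell0}. First I would handle the ``Otherwise'' clause: if either $p$ or $q$ fails to be admissible, then by the definition immediately preceding the lemma the set $\pi_2(\ell_{01};p)$ (resp.\ $\pi_2(\ell_{01};q)$) is empty, where $\ell_{01}$ is built from $\gamma_0,\gamma_1$ via \eqref{ellij}. But an admissible class $B\in\pi_2^{ad}(\CE,(p,q))$ is by Definition \ref{classB} a gluing $[w^-_{01}]\#[w^-_{10}]$ of two bounding strips whose ``outer'' edges are prescribed by the anchors; restricting attention to the strip $w^-_{01}$, its boundary conditions \eqref{3.5form}--\eqref{3.6form} for $i=0$ say exactly that $w^-_{01}$ represents an element of $\pi_2(\ell_{01};p)$ (after the obvious reparametrization of the square matching the conventions of \eqref{condw}). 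Hence non-admissibility of $p$ forces $\pi_2^{ad}(\CE,(p,q))=\emptyset$; the same argument with the strip $w^-_{10}$ and $L_1,L_0$ (using $L_2=L_0$ as in the cyclic convention) handles $q$.

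For the main clause, assume $p$ and $q$ are both admissible; I must show $\pi_2^{ad}(\CE,(p,q))=\pi_2(\CE,(p,q))$. The inclusion $\pi_2^{ad}\subset\pi_2$ is immediate from the definitions, so the content is the reverse inclusion. Given an arbitrary $B\in\pi_2(\CE,(p,q))=\pi_2(p,q)$, I want to exhibit bounding strips $w^-_{01}$, $w^-_{10}$ as in Definition \ref{classB} whose glued class is $B$. The idea is: admissibility of $p$ gives \emph{some} strip $w_{01}\in\pi_2(\ell_{01};p)$, i.e.\ a class $\alpha\in\pi_2(\ell_{01};p)$; admissibility of $q$ gives a class $\alpha'\in\pi_2(\ell_{01};q)$. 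Now use the gluing map \eqref{eq:pi2pell0} in the form $\pi_2(\ell_{01};q)\times\pi_2(q,p)\to\pi_2(\ell_{01};p)$. The difference class $\alpha\#(-B)\in\pi_2(\ell_{01};q)$ together with the existing $w_{01}$ representing $\alpha$ can be arranged so that, running the first strip forward along $B$ and then back, one recovers precisely the ``outer-edge-is-$\ell_{01}$'' boundary data required for $w^-_{01}$ and $w^-_{10}$. Concretely: set $w^-_{01}$ to be (a reparametrization of) a representative of $\alpha$, and set $w^-_{10}$ to be a representative of $\alpha\#(-B)$ read from $q$ back to the base path; then $[w^-_{01}]\#[w^-_{10}]$, glued according to the cyclic convention, equals $B$ by cancellation of the two copies of $\alpha$ along the shared anchor edge $\ell_{01}$. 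This is where the overlapping property \eqref{eq:ellij} and the precise matching of the $t$-parametrizations in \eqref{ellij}, \eqref{condw}, and \eqref{3.6form} must be checked carefully.

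The main obstacle I anticipate is purely bookkeeping but genuinely delicate: matching the various parametrization conventions. The polygon description in Definition \ref{classB} uses a disk $D^2$ with marked points $\chi_i=\exp(-2\pi i\sqrt{-1}/k)$ and, for $k=1$, this degenerates in a way that has to be reconciled with the square-domain picture of $\pi_2(p,q)$ from the start of the section and with the strip boundary data \eqref{3.5form}--\eqref{3.6form}. One must verify that gluing two bounding strips along the anchor edge $w^-_{01}(1,\cdot)=\ell_{01}=w^-_{10}(1,\cdot)$ (with the opposite orientations dictated by the cyclic labeling $L_2=L_0$) does produce a map on $D^2$ in the class $\pi_2(\frak L;\vec p)$, and that the operation of ``forgetting the anchor edge'' is inverse to this gluing up to homotopy. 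The key input making the reverse inclusion work at all is the remark in the text that any homotopy class of path in $\Omega(L,L')$ can be realized through $y$, which is what guarantees that admissibility of the \emph{endpoints} $p,q$ is the only obstruction — there is no further obstruction coming from the class $B$ itself, precisely because $\pi_2(p,q)$ is a torsor-like set acted on by gluing and the anchor data only pins down the component, not the class. I would close by noting that this is the special case $k=1$ of the general phenomenon recorded in Lemma \ref{existanchor2}.
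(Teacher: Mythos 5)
Your argument is correct, and since the paper records this lemma with ``the proof is easy and omitted,'' what you give is essentially the intended argument: the ``otherwise'' clause follows because a decomposition $B=[w^-_{01}]\#[w^-_{10}]$ as in Definition \ref{classB} produces, after the reparametrizations $s\mapsto 1-s$ and $t\mapsto 1-t$, elements of $\pi_2(\ell_{01};p)$ and of $\pi_2(\ell_{01};q)$, so non-admissibility of either point forces $\pi_2^{ad}(\CE,(p,q))=\emptyset$; and the main clause follows by choosing $\alpha\in\pi_2(\ell_{01};p)$ and transporting it along $B$ via the gluing map \eqref{eq:pi2pell0}. One correction to your bookkeeping: the second strip should represent $\beta:=\alpha\# B\in\pi_2(\ell_{01};q)$, so that $B=\overline\alpha\#\beta$; your ``difference class'' $\alpha\#(-B)$ does not typecheck under \eqref{eq:pi2pell0} (the reversed class lies in $\pi_2(q,p)$, so it cannot be glued to $\alpha\in\pi_2(\ell_{01};p)$ on that side), and if one reads it as stated the cancellation would return $B$ with the wrong orientation. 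With $\beta=\alpha\#B$ the delicate step you flag reduces to the identity $\overline\alpha\#(\alpha\#B)=B$, which holds because, viewing these strips as paths in $\Omega(L_0,L_1)$ joining $\ell_{01}$ to the constant paths $\widehat p$, $\widehat q$, the concatenation $\overline\alpha*\alpha$ is homotopic rel endpoints to the constant path at $\widehat p$; this also takes care of the $k=1$ polygon-versus-square comparison, since for $k=1$ the glued polygon is just the strip from $\widehat p$ to $\widehat q$ passing through $\ell_{01}$. Note, finally, that your construction shows slightly more than the statement: if $p$ is admissible and $\pi_2(p,q)\ne\emptyset$, then $q$ is automatically admissible, since $\alpha\#B$ already exhibits an element of $\pi_2(\ell_{01};q)$.
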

The proof is easy and omitted.
\begin{lem}\label{existanchor1}
Let $L_0,L_1$ be a pair of Lagrangian submanifold and $p \in L_0 \cap L_1$.
Then for each given anchor $\gamma_0$ of $L_0$ there exists an anchor
$\gamma_1$ of $L_1$ such that $p$ is admissible
with respect to the
pair $((L_0,\gamma_0),(L_1,\gamma_1))$.
\end{lem}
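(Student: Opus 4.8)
The plan is to use the freedom in choosing $\gamma_1$: I will build $\gamma_1$ by concatenating $\gamma_0$ with a path that runs inside $L_0$ from $\gamma_0(1)$ to $p$, so that the induced base path $\ell_{01}$ becomes homotopic, \emph{within} $\Omega(L_0,L_1)$, to the constant path $\widehat p$. First I would invoke the reformulation recorded just after the definition of admissibility: $p$ is admissible with respect to $((L_0,\gamma_0),(L_1,\gamma_1))$ if and only if $\pi_2(\ell_{01};p)$ is nonempty, where $\ell_{01}$ is the concatenation \eqref{ellij} of $\gamma_0$ and $\gamma_1$. An element of $\pi_2(\ell_{01};p)$ is a homotopy class of maps $w:[0,1]^2\to M$ with $w(0,\cdot)=\ell_{01}$, $w(1,\cdot)\equiv p$, $w(s,0)\in L_0$, $w(s,1)\in L_1$; reading $s\mapsto w(s,\cdot)$ as a path in $\Omega(L_0,L_1)$, this is precisely the statement that $\ell_{01}$ and $\widehat p$ lie in the same connected component of $\Omega(L_0,L_1)$. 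Hence it suffices to choose $\gamma_1$ so that $\ell_{01}$ is path-connected to $\widehat p$ inside $\Omega(L_0,L_1)$.

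Next I would make the choice. Using connectedness of $L_0$, pick a path $c:[0,1]\to L_0$ with $c(0)=\gamma_0(1)$ and $c(1)=p$, and set $\gamma_1:=\gamma_0\ast c$ (concatenation, smoothed at the join in the usual way). Then $\gamma_1(0)=y$ and $\gamma_1(1)=c(1)=p\in L_1$, so $\gamma_1$ is a legitimate anchor of $L_1$. Unwinding \eqref{ellij} for this $\gamma_1$, the base path $\ell_{01}$ traverses $\gamma_0$ backwards from $\gamma_0(1)$ to $y$, then $\gamma_0$ forwards back to $\gamma_0(1)$, and finally $c$ from $\gamma_0(1)$ to $p$; in particular $\ell_{01}(0)=\gamma_0(1)\in L_0$ and $\ell_{01}(1)=p\in L_1$, as required of a base path in $\Omega(L_0,L_1)$.

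Then I would exhibit the connecting path in $\Omega(L_0,L_1)$ in two stages. Stage one cancels the backtrack along $\gamma_0$: for $a\in[0,1]$ let $H_a$ be the path that runs $\gamma_0$ backwards from $\gamma_0(1)$ to $\gamma_0(a)$, then $\gamma_0$ forwards from $\gamma_0(a)$ to $\gamma_0(1)$, then $c$ to $p$. For every $a$ one has $H_a(0)=\gamma_0(1)\in L_0$ and $H_a(1)=p\in L_1$, so $a\mapsto H_a$ is a path in $\Omega(L_0,L_1)$ running from $H_0=\ell_{01}$ to $H_1$, which up to reparametrization is $c$ itself. Stage two retracts $c$ to $\widehat p$: set $c_s(t)=c\big((1-s)t+s\big)$, so $c_0=c$, $c_1\equiv p$, and $c_s(0)=c(s)\in L_0$, $c_s(1)=c(1)=p\in L_1$ for all $s$; thus $s\mapsto c_s$ is a path in $\Omega(L_0,L_1)$ from $c$ to $\widehat p$. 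Concatenating the two stages gives a path in $\Omega(L_0,L_1)$ from $\ell_{01}$ to $\widehat p$, whence $\pi_2(\ell_{01};p)\neq\emptyset$ and $p$ is admissible with respect to $((L_0,\gamma_0),(L_1,\gamma_1))$, completing the proof.

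I do not expect a genuine obstacle: the content is entirely the observation that one can absorb a suitable path of $L_0$ into the anchor of $L_1$. The only points needing care are the bookkeeping with the reparametrization conventions in \eqref{ellij} and \eqref{wanch9or}, and verifying at each stage that the two endpoints of the moving path stay in $L_0$ and $L_1$ respectively — which is automatic above, since in stage one the left endpoint is pinned at $\gamma_0(1)\in L_0$, in stage two it slides along $c\subset L_0$, and the right endpoint stays at $p\in L_1$ throughout. One should also record the standing assumption that $L_0$ is connected, which is exactly what makes the path $c$ available; absent that, the same construction shows $p$ is admissible for a suitable $\gamma_1$ precisely when $p$ lies in the component of $L_0$ containing $\gamma_0(1)$.
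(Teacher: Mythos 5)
Your proof is correct, and since the paper omits this proof as ``easy,'' your argument --- taking $\gamma_1=\gamma_0\ast c$ with $c$ a path in $L_0$ from $\gamma_0(1)$ to $p$, cancelling the backtrack to reduce $\ell_{01}$ to $c$, and then sliding the left endpoint along $c\subset L_0$ to reach the constant path $\widehat p$, using the stated equivalence of admissibility with $\pi_2(\ell_{01};p)\neq\emptyset$ --- is precisely the intended one. Your closing caveat is also apt: connectedness of $L_0$ (or at least that $p$ and $\gamma_0(1)$ lie in the same component) is genuinely needed, since any path in $\Omega(L_0,L_1)$ from $\widehat p$ to $\ell_{01}$ traces out, via its $t=0$ boundary, a path in $L_0$ from $p$ to $\gamma_0(1)$.
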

The proof is easy and omitted.
\par
The proof of the following two lemmas are also easy and so is omitted.
\begin{lem}\label{existanchor2}
Let $\frak L$ be a Lagrangian chain and $B \in \pi_2(\frak L;\vec p)$.
Then there exist anchors $\gamma_i$ of $L_i$ $(i=0,\cdots,k)$ such that
$B$ is admissible with respect to $\CE$, where
$$
\CE = ((L_0,\gamma_0),\cdots,(L_k,\gamma_k)).
$$
\end{lem}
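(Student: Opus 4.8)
The plan is to construct the anchors $\gamma_i$ directly from the given polygon representing $B$. Let $v : D^2 \to M$ be a map representing the class $B \in \pi_2(\frak L;\vec p)$, so that $v(\overline{\chi_{i+1}\chi_i}) \subset L_i$ and $v(\chi_i) = p_{i(i+1)}$. The key observation is that Definition \ref{classB} only asks for the polygon $B$ to be \emph{a} gluing of $k$ bounding strips $w_{i(i+1)}^-$ whose inner edges ($s=1$) trace out the concatenated paths $\gamma_i \cdot \gamma_{i+1}$ through the common base point $y$. So I would pick an auxiliary interior point $\ast \in \operatorname{int} D^2$, pick $k$ disjoint arcs in $D^2$ from $\ast$ to the vertices $\chi_i$ that cut $D^2$ into $k$ sectors (one for each edge $\overline{\chi_{i+1}\chi_i}$), and then \emph{define} $y := v(\ast)$ and $\gamma_i$ to be (a reparametrization of) the image under $v$ of the arc from $\ast$ to the vertex $\chi_i$. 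By construction $\gamma_i(0) = v(\ast) = y$ and $\gamma_i(1) = v(\chi_i) = p_{i(i+1)} \in L_i$, so each $\gamma_i$ is a genuine anchor of $L_i$ to $y$.

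Next I would check that with these choices $B$ is admissible to $\CE = ((L_0,\gamma_0),\dots,(L_k,\gamma_k))$. The $k$ sectors cut off by the arcs are each a disk with three boundary pieces: an arc of $\partial D^2$ equal to $\overline{\chi_{i+1}\chi_i}$ (mapped into $L_i$), and the two arcs to $\chi_i$ and $\chi_{i+1}$ (mapped to $\gamma_i$ and $\gamma_{i+1}$). Each such sector is a topological square, so restricting $v$ to it and reparametrizing the square as $[0,1]^2$ with the $L_i$-edge at $t=0$... — wait, one must be careful: a bounding strip $w_{i(i+1)}^-$ as in \eqref{wanch9or} has \emph{two} Lagrangian edges ($L_i$ at $t=0$ and $L_{i+1}$ at $t=1$), the edge $s=0$ collapsed to the point $p_{(i+1)i}$, and the edge $s=1$ equal to $\ell_{i(i+1)}$ as in \eqref{ellij}. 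So the correct dissection is not by arcs to the \emph{vertices} but by arcs whose far endpoints land in the appropriate Lagrangians. The clean fix is: choose the arcs from $\ast$ to interior points of the \emph{edges} $\overline{\chi_{i+1}\chi_i}$ rather than to the vertices; then each sector has the combinatorial type of a pentagon, but by collapsing the short boundary arc near a vertex $\chi_i$ (which maps entirely to the point $p_{i(i+1)}$) we recover exactly the square $[0,1]^2$ with the boundary behaviour \eqref{3.5form}–\eqref{3.6form}, and the $s=1$ edge is precisely the concatenation of $\gamma_i$ (reversed) and $\gamma_{i+1}$, i.e. $\ell_{i(i+1)}$. Reparametrize, and set $w_{i(i+1)}^-$ to be this restriction of $v$; then $B = [w^-_{01}]\#\cdots\#[w^-_{k0}]$ by construction, which is exactly the admissibility condition.

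The main obstacle is purely bookkeeping: matching the combinatorics of the dissected disk to the precise boundary conventions of \eqref{wanch9or}, in particular getting the reversal/concatenation in \eqref{ellij} to come out with the right orientation, and handling the degenerate edge $w^-_{i(i+1)}(0,t) = p_{(i+1)i}$ correctly when collapsing a corner of a sector. There is no analytic or topological difficulty — the homotopy class $B$ is already realized by \emph{some} $v$, and we are just cutting $v$ up and relabelling — so once the dissection is set up compatibly with the conventions, the verification that $B \in \pi_2^{ad}(\CE;\vec p)$ is immediate. This is why the authors list it among the lemmas "whose proof is easy and so is omitted."
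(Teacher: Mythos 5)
Your cut-and-relabel construction is surely the intended argument (the paper omits the proof as ``easy''), and in outline it works: choose an interior point $\ast$, cut $D^2$ along arcs from $\ast$ to interior points of the boundary edges (one per edge, hence $k+1$ arcs and $k+1$ sectors, matching the $k+1$ strips $w^-_{01},\dots,w^-_{k0}$ in Definition \ref{classB} -- the ``$k$ bounding strips'' there is the paper's own slip), and read off the anchors and strips from the restriction of $v$. However, there is one genuine gap as written: in Definition \ref{defn:anchored} the base point $y$ is \emph{fixed in advance}, whereas you \emph{define} $y:=v(\ast)$. Your paths $\gamma_i$ are then anchors to $v(\ast)$, not to the given $y$, so the lemma is not yet proved unless $v(\ast)=y$. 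The repair is short but should be said: choose a path $\delta$ from $y$ to $v(\ast)$ (connectedness of $M$ is already used by the paper when it notes that any class in $\Omega(L,L')$ is realized by a path through $y$) and replace each $\gamma_i$ by $\delta*\gamma_i$. Admissibility survives this change: each new strip is the old one with a collar attached along its $s=1$ edge realizing the homotopy, rel endpoints, from $\overline{\gamma_i}*\gamma_{i+1}$ to $\overline{\delta*\gamma_i}*(\delta*\gamma_{i+1})$ (insertion of $\overline{\delta}*\delta$ in the middle), and the resulting glued disk is obtained from $v$ by growing a ``whisker'' along $\delta$ at the interior point $\ast$; this is a homotopy rel the boundary, so the glued class is still $B$.

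Two further bookkeeping points you flag but should make explicit. First, to obtain literally the degenerate edge condition \eqref{3.6form} (a whole edge of the square mapped to $p_{(i+1)i}$), the original $v$ need not be constant on any boundary arc near a vertex; either first homotope $v$, through maps satisfying the defining conditions of $\pi_2(\frak L;\vec p)$ (e.g.\ precompose with a self-map of $D^2$, homotopic to the identity, that squashes a small boundary arc at each vertex onto that vertex), or simply parametrize each sector by $[0,1]^2$ with the $s=0$ edge collapsed to the vertex. Either device produces strips satisfying \eqref{wanch9or} exactly, after which $B=[w^-_{01}]\#\cdots\#[w^-_{k0}]$ holds by construction. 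Second, each sector is a quadrilateral (corners $\ast$, the two chosen edge points, and the vertex), not a pentagon; the relevant regrouping is that the two interior arcs together form the $s=1$ edge, with $\ast$ becoming the midpoint $t=1/2$ mapped to the base point, as required by \eqref{ellij}. With these adjustments your proof is complete.
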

The anchors in Lemmas \ref{existanchor1}, \ref{existanchor2} are not
necessarily unique (even up to homotopy).
It is rather complicated to describe how many there are.
(See Section \ref{sec:relation} for some illustration.)
The following definition can be used to study the gluing formulas of
symplectic areas and Maslov indices of pseudo-holomorphic polygons
that enter in the construction of the anchored version of Fukaya category.
\begin{defn}\label{abstractindex}
Let $R$ be a module. We say a collection of maps
$$
I = \{I_k : \pi_2^{ad}(\CE;\vec p) \to R\}_{k=1}^\infty
$$
an {\it abstract index} over the collection of
anchored Lagrangian chains $\CE$, if they satisfy the following gluing
rule: whenever the gluing is defined, we have
$$
I_{k+1}([w^-_{01}]\# \cdots \# [w^-_{(k-1)k}] \# [w^-_{k0}]) = \sum_{i=0}^k
I_1([w^-_{i(i+1)}]).
$$
\end{defn}

In subsection
\ref{subsec:seredanchor}, we will use another abstract index, a
\emph{normalized symplectic area} over the class of BS-rational
Lagrangian submanifolds with $R = \Q$ or with $R = \frac{1}{N}\cdot
\Z$ for integers $N$.
\section{Anchors, action functional and action spectrum}
\label{sec:pointed}
For given two anchors $\gamma, \, \gamma'$ homotopic to each other,
we denote by $\pi_2(\gamma,\gamma';L)$ the set of homotopy classes of the
maps $w: [0,1]^2 \to M$ satisfying
$$
 w(0,t) = \gamma(t), \, w(1,t) = \gamma'(t), \, w(s,0)\equiv y,\,
 \text{and } \,
w(s,1) \in L.
$$
For any such map $w$, we define \be\label{eq:Agg'L}
a_{(\gamma,\gamma';L)}(w) = \int w^*\omega. \ee It is immediate to
check that this function pushes down to $\pi_2(\gamma,\gamma':L)$
which we again denote by $a_{(\gamma,\gamma';L)}$.

We denote by $G(\gamma,\gamma':L) \subset \R$ the image of
$a_{(\gamma,\gamma';L)}$. The following is easy to check whose proof
we omit.
Let $\overline{\gamma}*\gamma'$ be an element $\Omega(L,L;M))$
obtained by concatenating $\overline{\gamma}$
(where $\overline{\gamma}(t) = \gamma(1-t)$) and $\gamma'$
in the same way as (\ref{ellij}), and
$\Omega_{\overline{\gamma}*\gamma'}(L,L;M))$ the connected component
of $\Omega(L,L;M))$ containing it.
\begin{lem} $\pi_2(\gamma,\gamma':L)$ is a principal homogeneous space of
$\pi_1(\Omega_{\overline{\gamma} *\gamma'}(L,L;M))$.
and so $G(\gamma,\gamma':L)$ is a principal
homogeneous space of the group
$$
\{ \omega(C) \mid C \in \pi_1(\Omega_{\overline\gamma *\gamma'}(L,L;M)) \}.
$$
\end{lem}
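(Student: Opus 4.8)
The plan is to recognize $\pi_2(\gamma,\gamma';L)$ as a set of path components of a path space, so that the torsor statement becomes an instance of a standard fact, and then to match the resulting fundamental group with $\pi_1(\Omega_{\overline\gamma*\gamma'}(L,L;M))$ by a doubling construction. First I would apply the exponential law: a map $w:[0,1]^2\to M$ is the same datum as a path $s\mapsto w(s,\cdot)$ in the mapping space $\mathrm{Map}([0,1],M)$ (compact-open topology), and the conditions $w(s,0)\equiv y$, $w(s,1)\in L$ say precisely that this path lies in the space $\mathcal A(y,L)$ of all anchors of $L$ to $y$, while $w(0,\cdot)=\gamma$, $w(1,\cdot)=\gamma'$ say it runs from $\gamma$ to $\gamma'$; a homotopy of such $w$'s through maps obeying the same boundary conditions is the same as a path-homotopy in $\mathcal A(y,L)$. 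Hence $\pi_2(\gamma,\gamma';L)$ is exactly the set of path-homotopy classes of paths in $\mathcal A(y,L)$ from $\gamma$ to $\gamma'$, and the hypothesis that $\gamma$ and $\gamma'$ are homotopic as anchors is precisely that they lie in one path component $\mathcal A_0$, so this set is non-empty.

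I would then invoke the elementary fact that for any space $X$ with points $a,b$ in one path component, the set of path-homotopy classes of paths from $a$ to $b$ is a principal homogeneous space over $\pi_1(X,a)$ under pre-concatenation $[c]\cdot[v]=[c*v]$: transitivity follows from $[v_2]=\bigl[v_2*\overline{v_1}\bigr]\cdot[v_1]$, and freeness from the fact that $[c*v]=[v]$ forces $[c]=1$ upon post-concatenating $[\overline v]$. Applied to $X=\mathcal A(y,L)$, $a=\gamma$, $b=\gamma'$ this shows $\pi_2(\gamma,\gamma';L)$ is a principal homogeneous space over $\pi_1(\mathcal A(y,L),\gamma)$.

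It remains to identify this group with $\pi_1(\Omega_{\overline\gamma*\gamma'}(L,L;M))$. Concatenation with the fixed anchor $\overline\gamma$ defines a continuous map $\Phi:\mathcal A(y,L)\to\Omega(L,L;M)$, $\eta\mapsto\overline\gamma*\eta$; since $\gamma$ and $\gamma'$ are homotopic through anchors, $\Phi$ sends the component $\mathcal A_0$ into the component of $\Omega(L,L;M)$ containing $\overline\gamma*\gamma'$ and thus induces a homomorphism $\pi_1(\mathcal A(y,L),\gamma)\to\pi_1(\Omega_{\overline\gamma*\gamma'}(L,L;M))$ (``doubling'' a loop of anchors $w(s,\cdot)$ at $\gamma$ to the loop $s\mapsto\overline\gamma*w(s,\cdot)$ at $\overline\gamma*\gamma$, then conjugating to the basepoint $\overline\gamma*\gamma'$), compatible with the action on $\pi_2(\gamma,\gamma';L)$. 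The point requiring care---indeed the heart of the lemma---is to check that this doubling realizes the asserted free transitive action of $\pi_1(\Omega_{\overline\gamma*\gamma'}(L,L;M))$ on $\pi_2(\gamma,\gamma';L)$, equivalently that the displayed homomorphism is an isomorphism; here one must work explicitly with reparametrizations, concatenation being only associative and unital up to homotopy. Granting this, the first assertion follows by combining with the previous paragraph.

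For the statement about $G(\gamma,\gamma';L)$ I would use that $\int(\,\cdot\,)^{*}\omega$ is additive under gluing of squares along a common edge and that a strip of the form $(s,t)\mapsto\overline\gamma(t)$, being independent of $s$, has zero symplectic area. Tracing the action through the identifications, acting on a square $w$ by a loop $C\in\pi_1(\Omega_{\overline\gamma*\gamma'}(L,L;M))$ amounts to gluing on a representative of $C$, so that $a_{(\gamma,\gamma';L)}(C\cdot w)=\omega(C)+a_{(\gamma,\gamma';L)}(w)$, where $\omega(C)$ is the symplectic area $\int w_C^{*}\omega$ of a cylinder $w_C$ representing $C$; this is well defined on $\pi_1(\Omega_{\overline\gamma*\gamma'}(L,L;M))$ ($\omega$ is closed and the two boundary circles lie in the Lagrangian $L$, on which $\omega$ vanishes, so by Stokes the area is a homotopy invariant), defines a homomorphism to $\R$, and---because the prepended strip contributes no area---has image equal to the group $\{\omega(C)\mid C\in\pi_1(\Omega_{\overline\gamma*\gamma'}(L,L;M))\}$ in the statement. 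Since $G(\gamma,\gamma';L)$ is by definition the image of $a_{(\gamma,\gamma';L)}$, the displayed relation together with the transitivity and freeness of the action on $\pi_2(\gamma,\gamma';L)$ shows that this subgroup of $\R$ acts on $G(\gamma,\gamma';L)$ by translation, transitively (any two areas differ by some $\omega(C)$) and freely (a nonzero translation of $\R$ has no fixed point); that is the asserted principal homogeneous space structure. The one genuine obstacle is the identification in the third paragraph; the remaining ingredients---the exponential law, the torsor fact, well-definedness of $\omega$, and additivity of symplectic area---are routine.
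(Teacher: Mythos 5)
Your first paragraph is fine: by the exponential law $\pi_2(\gamma,\gamma';L)$ is the set of path-homotopy classes of paths from $\gamma$ to $\gamma'$ in the space $\mathcal A(y,L)$ of anchors, hence a torsor over $\pi_1(\mathcal A(y,L),\gamma)$. (The paper offers no proof of this lemma to compare with, so your argument must stand on its own.) The genuine gap is exactly the step you label ``the heart of the lemma'' and then grant: the claim that $\eta\mapsto\overline\gamma*\eta$ induces an isomorphism $\pi_1(\mathcal A(y,L),\gamma)\to\pi_1(\Omega_{\overline\gamma*\gamma'}(L,L;M))$ is false in general. Take $M=\R^2$ (or any ball) with $L=S^1$ and $y$ its center: evaluation at the endpoint shows $\mathcal A(y,L)\simeq L$, so $\pi_1(\mathcal A(y,L))\cong\Z$, while evaluation at both endpoints shows $\Omega(L,L;M)\simeq L\times L$, so $\pi_1(\Omega_{\overline\gamma*\gamma'}(L,L;M))\cong\Z^2$; your prepending map induces $n\mapsto(0,n)$, injective but not surjective. (Structurally, the two spaces fibre over $L$ and over $L\times L$ with the same fibre $\simeq\Omega_yM$, so their $\pi_1$'s differ whenever $\pi_1(L)$ contributes.) In particular the full group $\pi_1(\Omega_{\overline\gamma*\gamma'}(L,L;M))$ does not even act on $\pi_2(\gamma,\gamma';L)$ through your construction -- only the image subgroup does -- so ``granting'' the isomorphism grants a false statement, and your last paragraph, which deduces that the area homomorphism has image equal to $\{\omega(C)\mid C\in\pi_1(\Omega_{\overline\gamma*\gamma'}(L,L;M))\}$, collapses with it.

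What the lemma actually requires, beyond your correct first paragraph, is the equality of period groups $\omega\bigl(\pi_1(\mathcal A(y,L),\gamma)\bigr)=\{\omega(C)\mid C\in\pi_1(\Omega_{\overline\gamma*\gamma'}(L,L;M))\}$. Your zero-area strip gives the inclusion $\subseteq$; the reverse inclusion -- that the area of every loop of paths from $L$ to $L$ is already realized by a loop of anchors -- is the nontrivial point and is nowhere addressed in your proposal. It can be proved, but needs an argument: using the endpoint-evaluation fibration $\Omega(L,L;M)\to L\times L$ with fibre $\simeq\Omega_yM$, one checks that classes coming from the fibre (i.e.\ from $\pi_2(M)$) occur in both path spaces with the same areas, that classes dragging one endpoint around a loop of $L$ which dies in $\pi_1(M)$ are realized in $\mathcal A(y,L)$ with the same area, and that classes dragging both endpoints along a common loop $\delta$ of $L$ have degenerate representatives $s\mapsto(\text{constant path at }\delta(s))$ of zero area; in the disc example above this is why both period groups equal $\pi\Z$ even though the fundamental groups differ. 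Also, your parenthetical Stokes argument should be stated more carefully: $\omega$ is closed but not exact, so homotopy invariance of $\omega(C)$ follows by applying Stokes to the three-dimensional chain traced by a homotopy, whose remaining boundary pieces lie in $L$ where $\omega$ vanishes, not by integrating a primitive over the boundary circles.
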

The action functional $\CA = \CA_{(\gamma_0,\gamma_1;L)}: \widetilde
\Omega(L_0,L_1;\ell_{01}) \to \R$ is defined by
$$
\CA([\ell,w]) = \int w^* w.
$$
\par
Note an element of $\widetilde{\Omega}(L_0,L_1;\ell_{01})$
is identified with a pair $[\ell,w]$ where $\ell \in {\Omega}(L_0,L_1;\ell_{01})$
and $w : [0,1]^2 \to M$ satisfies
\begin{equation}\label{coverelement}
w(0,t) = \ell_{01}(t), \quad w(1,t)\equiv \ell(t), \quad w(s,0) \in L_0, \quad w(s,1) \in L_1.
\end{equation}
We identify $[\ell,w]$ with $[\ell,w']$ if
\begin{equation}\label{coveridentify}
\int (\overline{w}'\#w)^*\omega = 0, \quad \mu(\overline{w}'\#w) = 0.
\end{equation}
Here $\overline{w}'(s,t) = w'(1-s,t)$ and $\mu$ is an appropriate Maslov index.
(See (\ref{condw3}) and Definition \ref{novcov}.)
\par
We now study dependence of the action functional $
\CA_{(L_0,\gamma_0),(L_1,\gamma_1)} $ on their anchors. Let $\gamma_0,
\, \gamma_0'$ and $\gamma_1, \, \gamma_1'$ be two anchors of $L_0$
and $L_1$ respectively. It defines $\ell_{01}$ and $\ell'_{01}$ by (\ref{ellij}).
We assume that there exist
paths $w_0$, $w_1$ connecting them respectively.
Then $\overline w_0 \# w_1$ induces a diffeomorphism
$$
\Phi_{\overline w_0 \# w_1} : \widetilde \Omega(L_0,L_1;\ell_{01})
\to \widetilde \Omega(L_0,L_1;\ell'_{01})
$$
defined by
\be\label{eq:Phiw0w1}
\Phi_{\overline w_0 \# w_1}([\ell,u]) = [\ell, (\overline w_0 \# w_1) \# u].
\ee
For the clarity of notations, we will use $\#$ for two dimensional concatenations and
by $*$ for one dimensional ones.

\begin{prop}\label{prop:CAgamma} Let $\gamma_i, \, \gamma_i'$ and $w_i$ for $i = 0, \, 1$ be as above.
Consider $[\ell,u] \in \widetilde \Omega(L_0,L_1; \ell_{01})$. Then we have
$$
\CA_{(L_0,\gamma_0),(L_1,\gamma_1)} - \Phi_{\overline w_0 \# w_1}^*
\CA_{(L_0,\gamma_0'),(L_1,\gamma_1')} \equiv \omega([\overline w_0 \# w_1]).
$$
\end{prop}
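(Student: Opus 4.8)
Here is a plan for proving Proposition~\ref{prop:CAgamma}.

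The plan is to prove the identity pointwise: evaluate both functions at an arbitrary $[\ell,u]\in\widetilde\Omega(L_0,L_1;\ell_{01})$ and reduce everything to the additivity of $\int(\cdot)^*\omega$ under the one- and two-dimensional concatenations. First I would fix notation and recall the models involved. By (\ref{coverelement}) an element of $\widetilde\Omega(L_0,L_1;\ell_{01})$ is represented by a pair $[\ell,u]$ with $u:[0,1]^2\to M$ a bounding surface from $\ell_{01}$ to $\ell$, and $\CA_{(L_0,\gamma_0),(L_1,\gamma_1)}([\ell,u])=\int u^*\omega$. The surface $W:=\overline w_0\#w_1:[0,1]^2\to M$ is, for each fixed $s$, the concatenation of $\overline{w_0(s,\cdot)}$ and $w_1(s,\cdot)$ formed exactly as in (\ref{ellij}); hence $W(0,\cdot)=\ell_{01}$, $W(1,\cdot)=\ell_{01}'$, $W(s,0)\in L_0$, $W(s,1)\in L_1$, and $\omega([\overline w_0\#w_1])=\int_{[0,1]^2}W^*\omega$, which by splitting the $t$-interval at $1/2$ and using (\ref{eq:Agg'L}) equals $a_{(\gamma_1,\gamma_1';L_1)}(w_1)-a_{(\gamma_0,\gamma_0';L_0)}(w_0)$.

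Next I would unwind the pullback. By (\ref{eq:Phiw0w1}),
$$
\Phi_{\overline w_0\#w_1}^*\CA_{(L_0,\gamma_0'),(L_1,\gamma_1')}([\ell,u])=\CA_{(L_0,\gamma_0'),(L_1,\gamma_1')}([\ell,(\overline w_0\#w_1)\#u])=\int\bigl((\overline w_0\#w_1)\#u\bigr)^*\omega .
$$
The surface $(\overline w_0\#w_1)\#u$ is obtained by first traversing $\overline w_0\#w_1$ from $\ell_{01}'$ back to $\ell_{01}$ and then $u$ from $\ell_{01}$ to $\ell$; since the pullback of $\omega$ integrates additively over such a gluing and changes sign under reversal of the homotopy parameter, this integral equals $\int u^*\omega-\int W^*\omega$. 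Subtracting from $\CA_{(L_0,\gamma_0),(L_1,\gamma_1)}([\ell,u])=\int u^*\omega$ leaves $\int W^*\omega=\omega([\overline w_0\#w_1])$, independently of $[\ell,u]$, which is the assertion.

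Finally I would justify the two well-definedness points the computation relies on. The value $\int u^*\omega$ does not depend on the chosen representative $u$ of $[\ell,u]$: if $[\ell,u]=[\ell,u']$ then (\ref{coveridentify}) gives $\int(\overline{u}'\#u)^*\omega=0$, i.e.\ $\int u^*\omega=\int(u')^*\omega$ (the Maslov condition in (\ref{coveridentify}) is irrelevant here). And $\Phi_{\overline w_0\#w_1}$ carries $\widetilde\Omega(L_0,L_1;\ell_{01})$ into $\widetilde\Omega(L_0,L_1;\ell_{01}')$ compatibly with the equivalence relations --- which is precisely what is asserted when (\ref{eq:Phiw0w1}) is introduced --- since $\overline{(\overline w_0\#w_1)\#u'}\#\bigl((\overline w_0\#w_1)\#u\bigr)$ is homotopic to $\overline u'\#u$ and hence has the same $\omega$-area and Maslov index. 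I expect the only real work to be keeping the orientation conventions for $\#$, $*$, the bar operation and the sign in $\omega([\overline w_0\#w_1])$ mutually consistent; once those are pinned down as above, the proof is the three-line computation of the previous paragraph.
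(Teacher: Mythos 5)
Your proposal is correct and follows exactly the route the paper has in mind: the paper's own proof is just ``Obvious from the definition,'' and your computation is the honest unwinding of that, using the definition of $\CA$, of $\Phi_{\overline w_0\#w_1}$ in (\ref{eq:Phiw0w1}), and additivity of $\int(\cdot)^*\omega$ under concatenation, together with the well-definedness checks via (\ref{coveridentify}). Nothing more is needed.
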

\begin{proof} Obvious from the definition.
\end{proof}
Now we define:
\begin{defn}[Action spectrum]
Denote by $\operatorname{Spec}((L_0,\gamma_0),(L_1,\gamma_1))$ the
set of critical values of $\CA_{(L_0,\gamma_0),(L_1,\gamma_1)}$ and
call the \emph{action spectrum} of the pair $(L_0,\gamma_0)$,
$(L_1,\gamma_1)$.
\end{defn}
An immediate corollary of Proposition \ref{prop:CAgamma} and this definition is
the following
\begin{cor}\label{cor:spectrum}
We assume that $\gamma_i$ is homotopic to $\gamma_i'$ for $i=0,\,1$.
Then there exists a real constant
$c = c((L_0,\gamma_0),(L_1,\gamma_1);(L_0,\gamma'_0),(L_1,\gamma'_1))$ depending on the pair
$(L_0,\gamma_0),\, (L_1,\gamma_1)$ such that
$$
\operatorname{Spec}((L_0,\gamma_0),(L_1,\gamma_1)) =
\operatorname{Spec}((L_0,\gamma_0'),(L_1,\gamma_1')) +
c
$$
as a subset of $\R$.
\end{cor}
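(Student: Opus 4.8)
The plan is to deduce the statement directly from Proposition \ref{prop:CAgamma} together with the fact, established in its setup, that $\Phi_{\overline w_0 \# w_1}$ is a diffeomorphism of Novikov coverings covering the identity on the relevant connected component of $\Omega(L_0,L_1)$.

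First, since $\gamma_i$ is homotopic to $\gamma_i'$ through anchors for $i=0,1$, I choose homotopies $w_i : [0,1]^2 \to M$ with $w_i(0,t) = \gamma_i(t)$, $w_i(1,t) = \gamma_i'(t)$, $w_i(s,0) \equiv y$ and $w_i(s,1) \in L_i$; these are exactly the data required in Proposition \ref{prop:CAgamma}. Concatenating these homotopies via (\ref{ellij}) shows that $\ell_{01}$ and $\ell'_{01}$ lie in the same connected component of $\Omega(L_0,L_1)$, so $\Phi := \Phi_{\overline w_0 \# w_1} : \widetilde\Omega(L_0,L_1;\ell_{01}) \to \widetilde\Omega(L_0,L_1;\ell'_{01})$ is well defined, is a diffeomorphism, and covers the identity map on $\Omega(L_0,L_1;\ell_{01}) = \Omega(L_0,L_1;\ell'_{01})$.

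Next I set $c := \omega([\overline w_0 \# w_1]) \in \R$. By Proposition \ref{prop:CAgamma},
$$
\CA_{(L_0,\gamma_0),(L_1,\gamma_1)} = \Phi^*\CA_{(L_0,\gamma_0'),(L_1,\gamma_1')} + c
$$
on $\widetilde\Omega(L_0,L_1;\ell_{01})$. Taking exterior derivatives gives $d\CA_{(L_0,\gamma_0),(L_1,\gamma_1)} = \Phi^* d\CA_{(L_0,\gamma_0'),(L_1,\gamma_1')}$, so $\Phi$ restricts to a bijection between the critical sets of the two functionals. Since these critical sets are precisely the lifts of the constant paths $\widehat p$, $p \in L_0 \cap L_1$, and $\Phi$ covers the identity, on each critical point the value of $\CA_{(L_0,\gamma_0),(L_1,\gamma_1)}$ equals the value of $\CA_{(L_0,\gamma_0'),(L_1,\gamma_1')}$ at the corresponding critical point plus $c$. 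Passing to images then yields $\operatorname{Spec}((L_0,\gamma_0),(L_1,\gamma_1)) = \operatorname{Spec}((L_0,\gamma_0'),(L_1,\gamma_1')) + c$, which is the asserted identity.

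The only point requiring any care — and it is precisely where the substantive work has already been done, in the construction of $\Phi$ and the proof of Proposition \ref{prop:CAgamma} — is verifying that the cap-gluing $u \mapsto (\overline w_0 \# w_1)\# u$ is compatible with the identifications (\ref{coveridentify}), so that $\Phi$ descends to a well-defined map of Novikov coverings intertwining the deck-transformation actions; granting this, the corollary is immediate. I would also note that $c$ a priori depends on the choice of homotopies $w_0, w_1$, but this is harmless since the statement only asserts the existence of such a constant.
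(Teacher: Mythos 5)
Your proposal is correct and follows essentially the same route as the paper: apply Proposition \ref{prop:CAgamma} with homotopies $w_0,w_1$ between the anchors, take $c=\omega([\overline w_0\# w_1])$, and conclude that the critical values, hence the spectra, shift by $c$. The extra remarks you make (that $\Phi_{\overline w_0\# w_1}$ covers the identity and respects the identifications defining the Novikov covering, and that $c$ may depend on the choice of $w_0,w_1$) are consistent with, and slightly more detailed than, the paper's two-line argument.
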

\begin{proof} Let $\gamma_i, \, \gamma_i'$ and $w_i$ for $i = 0, \, 1$ be as above.
By Proposition \ref{prop:CAgamma}, we have
$$
\operatorname{Crit} \CA_{(L_0,\gamma_0'),(L_1,\gamma_1')} + \omega([\overline w_0 \# w_1])
= \operatorname{Crit} \CA_{(L_0,\gamma_0),(L_1,\gamma_1)}
$$
for any choice of $w_0, \, w_1$ joining $\gamma_0, \, \gamma_0'$ and
$\gamma_1, \, \gamma_1'$ respectively.

Just take $c = \omega([\overline w_0 \# w_1])$. This finishes the proof.
\end{proof}
Next we consider the Lagrangian chains with 3 or more elements in
them. When we are given an
anchored Lagrangian chain
$$
\CE = ((L_0,\gamma_0), (L_1,\gamma_1), \cdots, (L_k,\gamma_k))
$$
these anchors give a systematic choice of a base path
$\ell_{ij} \in \Omega(L_i,L_j)$ by concatenating $\gamma_i$ and
$\gamma_j$ as in (\ref{ellij}).
Inside the collection of anchored Lagrangian submanifolds
$(L,\gamma)$ we are given a coherent system of single valued action functionals
$$
\CA : \widetilde{\Omega_0}(L_i,L_j;\ell_{ij}) \to \R.
$$
We will use the action functional associated to $\ell_{ij}$ to define an
energy level on the critical point set
$
\CA:\operatorname{Crit} \CA \to \R.
$
By the overlapping property \eqref{eq:ellij},
the following proposition is immediate whose proof we omit.
\begin{prop}
Denote by $\CE$ an anchored Lagrangian chain. Consider the map
$I_{\omega,k}: \pi_2(\vec p; \CE) \to \R$ defined by the symplectic area
$I_{\omega,k}(\alpha) = \omega(\alpha)$
for $k = 1, \cdots,$. Then the collection denoted by $I_\omega = \{I_{\omega,k}\}_{k=1}^\infty$
defines an abstract index of anchored Lagrangian chains.
\end{prop}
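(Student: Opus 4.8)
The plan is to verify the single requirement of Definition \ref{abstractindex}, the gluing rule
$$
I_{\omega,k+1}\big([w^-_{01}]\#\cdots\#[w^-_{k0}]\big)=\sum_{i=0}^k I_{\omega,1}\big([w^-_{i(i+1)}]\big)
$$
for every admissible class and every $k$; that each $I_{\omega,k}$ is a well-defined function on $\pi_2^{ad}(\CE;\vec p)$ is standard, since for a polygon $v\colon D^2\to M$ the number $\omega([v])=\int_{D^2}v^*\omega$ depends only on $[v]$: as $\omega|_{L_i}=0$ for the Lagrangians carrying the faces and the corners are pinned at the $p_{i(i+1)}$, a homotopy $H\colon D^2\times[0,1]\to M$ through such polygons changes $\int v^*\omega$ only by the integral of $H^*\omega$ over $\partial D^2\times[0,1]$, and by Stokes together with $d\omega=0$ this vanishes (the face arcs land in the $L_i$, the corner arcs are one-dimensional).

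For the gluing rule the only real work is to unwind Definition \ref{classB}: an admissible $B$ is by definition represented by a polygon glued out of the bounding strips $w^-_{i(i+1)}$. I would fix once and for all a subdivision of $D^2$ into closed regions $R_0,\dots,R_k$, each affinely identified with $[0,1]\times[0,1]$ and arranged cyclically around the center so that $R_i$ meets $R_{i+1}$ along exactly one radial arc; on $R_i$ one sets $v=w^-_{i(i+1)}$. For these pieces to glue to a single continuous map $v\colon D^2\to M$ — smooth after a homotopy supported in small collars of the radial arcs — adjacent strips must agree along the shared radial arc. By \eqref{wanch9or} the $s=1$ edge of $w^-_{i(i+1)}$ is $\ell_{i(i+1)}=\overline{\gamma}_i*\gamma_{i+1}$, whose second half is $\gamma_{i+1}$, while the $s=1$ edge of $w^-_{(i+1)(i+2)}$ has first half $\overline{\gamma}_{i+1}$, the reverse of the same path; thus the shared radial arc carries the anchor $\gamma_{i+1}$ traversed in opposite directions by the two strips, as encoded by the overlapping property \eqref{eq:ellij}, so the edges match. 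The resulting $v$ represents $B$, and additivity of the integral over the subdivision $D^2=\bigcup_i R_i$ gives
$$
I_{\omega,k+1}(B)=\int_{D^2}v^*\omega=\sum_{i=0}^k\int_{R_i}v^*\omega=\sum_{i=0}^k\int(w^-_{i(i+1)})^*\omega=\sum_{i=0}^k I_{\omega,1}\big([w^-_{i(i+1)}]\big),
$$
which is the required identity, uniformly in $k\ge 1$.

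I expect the main obstacle to be purely organizational: pinning down the subdivision of $D^2$ and the edge identifications so that the glued map really is a polygon in the sense used to define $\pi_2(\frak L;\vec p)$ (corners $\chi_i$ mapped to $p_{i(i+1)}$, faces mapped into the $L_i$), and confirming that the homotopy class one obtains is the admissible class $B$ itself rather than some modification of it. There is no analytic content: once this combinatorial picture is fixed, the proposition reduces to the two elementary facts that the symplectic area is a homotopy invariant of such polygons and is additive under concatenation of maps along common boundary arcs, which is exactly why the statement may be — and is — left to the reader.
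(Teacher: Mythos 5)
Your argument is correct and is exactly what the paper has in mind: the paper omits the proof, asserting the proposition is immediate from the overlapping property \eqref{eq:ellij}, and your write-up simply spells that out (the anchors match up as interior radial arcs of the glued polygon, so additivity of $\int v^*\omega$ over the subdivision plus homotopy invariance of the area, using $\omega|_{L_i}=0$, gives the gluing rule). No gap; this is the intended, essentially combinatorial verification.
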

\section{Grading and filtration}
\label{sec:grading}
A familiar description of generators of Floer chain module as
the set of equivalence classes $[p,w]$ in the Novikov covering space
is useful as far as the study of \emph{filtration} on the Floer
complex is concerned. However for the study of \emph{grading} and
\emph{signs} on the Floer complex, we have to have additional
structures on the Floer chain module which requires some geometric
condition on the Lagrangian side, e.g., spin structure or graded
structure. There has been a few different approach to how one
incorporates these additional structures. In this section, we
describe them by using anchors.
\subsection{Maslov index in Lagrangian Grassmannian}
\label{subsec:bundlepairs}
In this subsection, we review the definition of Maslov index in
Lagrangian Grassmannian. The Lagrangian Grassmannian $Lag(S,\omega)$
of a symplectic vector space $(S,\omega)$ is defined to be
$$
Lag(S,\omega) =\{V \mid V \text{ is a Lagrangian subspace of
$(S,\omega)$} \}.
$$
When we equip $S$ a compatible complex structure $J$ and
define $U(S)$ to be the group of unitary transformations of $S$, any
$V_0,V_1 \subset Lag(S,\omega)$ can be written as
$
V_1 = A \cdot V_0
$
for some $A \in U(S)$. In \cite{arnold}, this fact is used to show that $H^1(Lag(S,\omega),\Z) \cong \Z$.
It generator $\mu \in H^1(Lag(S,\omega),\Z)$ is
the {\it Maslov class} \cite{arnold} and two loops $\gamma_1, \, \gamma_2$ are homotopic
if and only if $\mu(\gamma_1) = \mu(\gamma_2)$.
\par
We give an elementary description of the
Maslov class below. We fix $V_0 \in Lag(S,\omega)$ and put
$$
Lag_1(S,\omega) = \{ V \in Lag(S,\omega) \mid
\dim(V \cap V_0) \geq 1 \}.
$$
It is proven in \cite{arnold}
that $Lag_1(S,\omega)$
is {\it co-oriented} and so defines a cycle whose Poincar\'e dual
is precisely the Maslov class $\mu \in H^1(Lag_1(S,\omega),\Z)$.
\par
The tangent space $T_{V_0}Lag(S,\omega)$ is
canonically isomorphic to the set of quadratic forms on $V_0$.
\begin{defn}\label{defn:+directed}
We say any tangent vector pointing
the chamber of nondegenerate positive-definite quadratic forms
is \emph{positively directed.}
\end{defn}
The following is also proved in \cite{arnold}.
\begin{lem}\label{2.1.1}
There exists a neighborhood $U$ of $V_0 \in Lag(S,\omega)$, the set
$$
U \setminus Lag_1(S,\omega;V_0)
$$
has exactly $n+1$ connected components each of which contains $V_0$ in
its closure.
\end{lem}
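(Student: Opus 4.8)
The plan is to reduce to a normal form computation in a single chart on the Lagrangian Grassmannian. First I would fix $V_0$ and, using the identification of $T_{V_0}Lag(S,\omega)$ with the space $Q(V_0)$ of quadratic forms on $V_0$ (recalled just before the statement), set up the standard affine chart: choose a Lagrangian complement $V_0'$ of $V_0$, so that every Lagrangian $V$ sufficiently $C^0$-close to $V_0$ is the graph of a unique symmetric linear map $A : V_0 \to V_0'$, equivalently (after using $\omega$ to identify $V_0'$ with $V_0^*$) a unique quadratic form $q_A \in Q(V_0)$. This chart $\Psi : Q(V_0) \supset \mathcal{O} \to Lag(S,\omega)$ is a diffeomorphism onto an open neighborhood of $V_0$, sends $0$ to $V_0$, and its differential at $0$ is the canonical isomorphism $T_{V_0}Lag \cong Q(V_0)$ used in Definition \ref{defn:+directed}.

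Next I would compute, in this chart, the locus $Lag_1(S,\omega;V_0) = \{V : \dim(V\cap V_0)\geq 1\}$. If $V = \Psi(q_A)$ is the graph of $A$, then $V \cap V_0 = \ker A$, so $V \in Lag_1$ precisely when $q_A$ is a degenerate quadratic form, i.e. when $\det$ of the associated symmetric matrix vanishes. Therefore, shrinking to a small convex ball $U' = \Psi(B_\varepsilon)$ around $V_0$, the set $U' \setminus Lag_1$ is identified with $\{q \in B_\varepsilon : q \text{ nondegenerate}\}$, and the connected components of the latter are exactly the subsets of fixed signature $(\text{index } j,\ n-j)$ for $j = 0,1,\dots,n$, where $n = \dim V_0$. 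This gives the required count of $n+1$ components; I would cite the openness of the nondegenerate locus, the fact that signature is a complete locally constant invariant on it (Sylvester's law of inertia together with continuity of eigenvalues), and connectedness of each signature stratum (any two forms of the same signature are joined by a path of such forms, e.g. by diagonalizing and scaling, provided $\varepsilon$ is chosen so that $B_\varepsilon$ contains a suitable path, which can be arranged by first proving the count in all of $Q(V_0)$ and then noting each stratum meets every ball about $0$). Finally, each signature stratum has $0$ (hence $V_0$) in its closure: the form $t\cdot q$ for a nondegenerate $q$ of the given signature limits to $0$ as $t \to 0^+$ while staying in the stratum, so $V_0 \in \overline{\Psi(\{t q : 0 < t \le \varepsilon\})}$, and this path lies in $U'$ for small $t$.

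The main obstacle I anticipate is purely bookkeeping rather than conceptual: one must verify that the affine chart $\Psi$ genuinely carries the intrinsic geometry — namely that $\Psi^{-1}(Lag_1(S,\omega;V_0))$ is the degeneracy hypersurface $\{\det = 0\}$ and that $d\Psi_0$ is the standard identification so that ``positively directed'' in Definition \ref{defn:+directed} corresponds to the positive-definite chamber $\{q > 0\}$. Both are standard facts about the symmetric-form model of $Lag$, but care is needed because the identification of $V_0'$ with $V_0^*$ depends on $\omega$ (not on an inner product), and the symmetry of $A$ is exactly the Lagrangian condition on its graph. Once that compatibility is nailed down, the connectedness-of-strata and closure claims are elementary linear algebra, so I would keep those steps brief.
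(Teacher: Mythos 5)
Your proposal is correct, and it is essentially the only available comparison point: the paper gives no proof of Lemma \ref{2.1.1} at all, simply attributing it to \cite{arnold}, and Arnold's original argument is precisely the chart computation you outline (Lagrangians near $V_0$ transverse to a fixed Lagrangian complement $V_0'$ written as graphs of $\omega$-symmetric maps $A:V_0\to V_0'$, with $V\cap V_0=\ker A$, so that $Lag_1(S,\omega;V_0)$ becomes the degeneracy hypersurface and the chambers are the $n+1$ signature strata). The only step I would phrase more carefully is the connectedness of each stratum after intersecting with the ball $B_\varepsilon$: instead of choosing $\varepsilon$ so that $B_\varepsilon$ ``contains a suitable path'', use that each signature stratum is a cone invariant under positive scaling, so for $q_1,q_2$ in the stratum with $\Vert q_i\Vert<\varepsilon$ you may slide both radially toward $0$ and then rescale any connecting path in the full stratum by a small positive constant to force it into $B_\varepsilon$; this works for every $\varepsilon$ at once and removes the apparent dependence of the neighborhood on the pair of points. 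The remaining ingredients --- local constancy of the signature on nondegenerate forms and the closure claim via $t\,q\to 0^+$ --- are exactly as you state, and the compatibility of $d\Psi_0$ with the identification $T_{V_0}Lag(S,\omega)\cong Q(V_0)$, while needed later for Definition \ref{defn:+directed} and Proposition \ref{2.1.3}, is not required for this lemma itself.
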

We refer readers to \cite{arnold} or see Proposition 3.3 of
\cite{fooo06} for the proof of the following proposition.
\begin{prop}\label{2.1.3} Let $(S,\omega)$ be a symplectic vector space
and $V_0\in (S,\omega)$ be a given
Lagrangian subspace. Let $V_1 \in Lag(S,\omega)
\setminus Lag_1(S,\omega;V_0)$
i.e., be a Lagrangian subspace with $V_0 \cap V_1 = \{0\}$.
Consider smooth paths $\alpha: [0,1] \to Lag(S,\omega)$ satisfying
\begin{enumerate}
\item $\alpha(0) = V_0,\, \alpha(1) = V_1$.
\item $\alpha(t) \in Lag(S,\omega)
\setminus Lag_1(S,\omega;V_0)$
for all $ 0 < t \leq 1$.
\item $\alpha'(0)$ is positively directed.
\end{enumerate}
Then any two such paths $\alpha_1, \, \alpha_2$ are homotopic to
each other via a homotopy $s \in [0,1] \mapsto \alpha_s$ such that
each $\alpha_s$ also satisfies the $3$ conditions above.
\end{prop}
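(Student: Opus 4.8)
The plan is to reduce everything to the standard linear model $S = \R^{2n} = \C^n$ with $V_0 = \R^n$ and $V_1 = \sqrt{-1}\,\R^n$, and then to exhibit an explicit "standard" path from $V_0$ to $V_1$ to which any path satisfying (1)--(3) can be deformed. First I would observe that the group $U(S) = U(n)$ (for the chosen compatible $J$) acts transitively on pairs of transverse Lagrangian subspaces, so after conjugating by a unitary transformation we may assume $V_0 = \R^n$, $V_1 = \sqrt{-1}\,\R^n$; this normalization carries the co-oriented cycle $Lag_1(S,\omega;V_0)$ and the notion of "positively directed" (Definition \ref{defn:+directed}) to their standard shapes, since these are defined intrinsically in terms of $(S,\omega,V_0)$. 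In this model the subset $Lag(S,\omega)\setminus Lag_1(S,\omega;V_0)$ is the set of Lagrangian subspaces that are graphs of symmetric matrices times $\sqrt{-1}$, i.e. $\{(\mathrm{Id} + \sqrt{-1}\,A)\R^n \mid A = A^T\}$, which is convex and hence contractible; this is the key geometric fact that makes the whole statement work.

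Next, given a path $\alpha$ satisfying (1)--(3), I would first isolate the behavior near $t = 0$. By Lemma \ref{2.1.1} there is a neighborhood $U$ of $V_0$ in which $U \setminus Lag_1(S,\omega;V_0)$ has exactly $n+1$ components, indexed by the signature of the quadratic form; condition (3) forces $\alpha$ to enter the component corresponding to the positive-definite chamber, and any two paths doing this can be matched up on a small initial interval $[0,\e]$ by a homotopy respecting the three conditions (this is an application of the local description of $Lag$ near $V_0$ as the space of quadratic forms on $V_0$, together with convexity of the positive-definite cone). After this initial segment, $\alpha$ lives entirely in the open contractible set $Lag(S,\omega)\setminus Lag_1(S,\omega;V_0)$ for $t \in [\e,1]$ — with fixed endpoints $\alpha(\e)$ (now standardized) and $\alpha(1) = V_1$ — so by convexity of the symmetric-matrix model the straight-line homotopy in the parameter $A$ connects it to the standard path, and this homotopy automatically stays in the complement of $Lag_1(S,\omega;V_0)$, so conditions (1) and (2) persist and condition (3) is untouched since we are not moving the initial segment. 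Concatenating the two homotopies gives a homotopy $s \mapsto \alpha_s$ from $\alpha_1$ to $\alpha_2$ through paths satisfying (1)--(3).

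The main obstacle I expect is the careful treatment near $t = 0$: conditions (2) and (3) together are a genuine constraint on germs of paths at the endpoint, and one must check that the "space of admissible germs at $V_0$" is itself connected — essentially that the choice of the positive-definite chamber in Lemma \ref{2.1.1}, together with the requirement of staying out of $Lag_1(S,\omega;V_0)$ for $t>0$, leaves no further discrete choice. This amounts to showing that a small sphere around $V_0$ inside $Lag(S,\omega)\setminus Lag_1(S,\omega;V_0)$, intersected with the positive-definite chamber's closure, is connected; in the symmetric-matrix coordinates this chamber is (a cone on) the convex set of positive-definite symmetric matrices, which is connected, so the obstruction dissolves, but this is the step that needs the most care to state cleanly. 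The rest — transitivity of $U(S)$, contractibility of the transverse-Lagrangian model, and the straight-line homotopy — is routine and I would cite \cite{arnold} or Proposition 3.3 of \cite{fooo06} for the details rather than reproduce them.
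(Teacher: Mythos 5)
The paper itself gives no argument for Proposition \ref{2.1.3}; it only points to \cite{arnold} and Proposition 3.3 of \cite{fooo06}, and your outline does follow the standard route of those sources (normalize by $U(S)$, analyze the germ at $t=0$ via Lemma \ref{2.1.1} and the positive cone, use an affine chart for the complement of the Maslov cycle). However, the identification on which your main step rests is wrong as written, and that step would fail with it. The set $\{(\mathrm{Id}+\sqrt{-1}\,A)\R^n \mid A=A^{T}\}$ consists of the Lagrangian graphs over $V_0=\R^n$, i.e.\ exactly the Lagrangians transverse to $V_1=\sqrt{-1}\,\R^n$, not those transverse to $V_0$: the graph of $\sqrt{-1}A$ meets $\R^n$ in $\ker A$. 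Hence in your chart the set $Lag(S,\omega)\setminus Lag_1(S,\omega;V_0)$ corresponds to $\{A \mid \det A\neq 0\}$, which is neither convex nor even connected (its $n+1$ components are labelled by the signature of $A$, consistently with Lemma \ref{2.1.1}); worse, the endpoint $V_1$ does not lie in this chart at all. So the ``straight-line homotopy in the parameter $A$'' on $[\e,1]$ can neither be performed with the fixed endpoint $\alpha(1)=V_1$ nor does it automatically stay off $Lag_1(S,\omega;V_0)$, which was the whole point.

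The fix is to use the chart adapted to $V_0$: the Lagrangians transverse to $V_0$ are precisely the graphs over $V_1$, namely $\{(\sqrt{-1}\,\mathrm{Id}+B)\R^n \mid B=B^{T}\}$, an affine and hence convex chart containing $V_1$ as $B=0$; there the straight-line homotopy does preserve conditions (1) and (2). With this correction your argument goes through, and in fact it can be run in one stage, which also removes the gluing bookkeeping you gloss over (homotoping the initial segments on $[0,\e]$ alone moves the value at $t=\e$, so your two homotopies cannot simply be concatenated). Near $t=0$ your chart is the right one: write $\alpha_i(t)$ as the graph of $\sqrt{-1}A_i(t)$ over $V_0$; smoothness and condition (3) give $A_i(t)=tC_i(t)$ with $C_i(0)=\alpha_i'(0)$ positive definite, while condition (2) makes the $V_1$-graph coordinate $B_i(t)$ defined for all $t\in(0,1]$, with $B_i(t)=A_i(t)^{-1}=t^{-1}C_i(t)^{-1}$ near $0$ and $B_i(1)=0$. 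The linear interpolation $B_s(t)=(1-s)B_1(t)+sB_2(t)$ then stays transverse to $V_0$ for every $t\in(0,1]$, fixes the endpoint $V_1$, and extends smoothly over $t=0$ with $\alpha_s(0)=V_0$ and $\alpha_s'(0)=\bigl((1-s)\,\alpha_1'(0)^{-1}+s\,\alpha_2'(0)^{-1}\bigr)^{-1}$ still positive definite, so all three conditions persist for every $s$. In particular the issue you worried about, connectivity of the space of admissible germs at $V_0$, is settled exactly by convexity of the positive-definite cone, as you suspected; only the global chart, and the straight-line step built on it, needed the correction above.
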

Let
$
Lag^+(S,\omega)
$
be the double cover of $Lag(S,\omega)$. Its element is regarded as an
element $V$ of $Lag(S,\omega)$ equipped with an orientation of $V$.
\subsection{Anchors and grading}
\label{subsec:anchorgrading}
To use the anchor in the definition of a grading in the Floer
complex, we need to equip each anchor with an additional decoration.
\par
Let $y \in M$ be the base point. We fix an oriented Lagrangian
subspace $V_y \in Lag^+(T_yM)$.
\begin{defn} \label{ancgrade}
Consider an anchored Lagrangian $(L,\gamma)$. We denote by
$\lambda$ a section of $\gamma^*Lag^+(M,\omega)$
such that
$$
\lambda(0)= V_y, \quad \lambda(1) = T_{\gamma(1)}L.
$$
We call such a pair $(\gamma,\lambda)$
a \emph{graded anchor} of $L$ (relative to $(y,V_y)$) and
a triple $(L,\gamma,\lambda)$ a \emph{graded anchored Lagrangian submanifold}.
\end{defn}
\begin{rem}
We remark that a notion similar to the graded anchor also appears in
Welchinger's recent work \cite{Welschinger}.
\end{rem}

Let $(L_0,\gamma_0,\lambda_0)$ and $(L_1,\gamma_1,\lambda_1)$ be graded
anchored Lagrangian submanifolds relative to
$(y,V_y)$. Assume that
$L_0$ and $L_1$ intersect transversely.
We define $\lambda_{01}(t) \in Lag^+(T_{\ell_{01}(t)}M)$
by concatenating $\lambda_0$ and $\lambda_1$ as follows:
\begin{equation}\label{lamda01}
\lambda_{01}(t)
=
\begin{cases}
\lambda_0(1-2t) & t\le 1/2 \\
\lambda_1(2t-1) & t\ge 1/2.
\end{cases}
\end{equation}
We consider a pair $[p,w]$ where $p \in L_0 \cap L_1$, and $w: [0,1]^2 \to M$
as in (\ref{condw}).
To put a grading at $[p,w]$, we recall the definition of Maslov-Morse index
introduced in \cite{fooo00}. For given $w$, we
associate a Lagrangian loop $\alpha_{[p,w];\lambda_{01}}$ defined on
$\partial [0,1]^2$ by
\begin{equation}
\aligned
\alpha_{[p,w];\lambda_{01}}(0,t) & = \lambda_{01}(t), \qquad
\alpha_{[p,w];\lambda_{01}}(s, 0) \equiv T_{w(s,0)}L_0, \\
\alpha_{[p,w];\lambda_{01}}(s,1) &\equiv T_{w(s,1)}L_1, \quad
\alpha_{[p,w];\lambda_{01}}(1,t) = \alpha_p^+(t)
\endaligned
\label{loopboundary}
\end{equation}
where $\alpha_p^+:[0,1] \to T_pM$ is a path connecting from
$T_pL_0$ to $T_pL_1$ in $Lag(T_pM,\omega_p)$ whose homotopy class is
the unique one as described in Proposition \ref{2.1.3}.
\par
Let $p \in L_0 \cap L_1$ and $w: [0,1]^2 \to M$ satisfy (\ref{condw}).
Choose a symplectic trivialization $
\Phi = (\pi, \phi) : w^*TM \to [0,1]^2 \times T_pM \cong [0,1]^2 \times \R^{2n} $
where $\pi: w^*TM \to [0,1]^2$ and $\phi:w^*TM \to T_pM$ are
the corresponding projections to $[0,1]^2$ and $T_pM$ respectively.
$\Phi$ is homotopically unique. Now we denote by
$\alpha_{[p,w];\lambda_{01}}^\Phi$ the Lagrangian loop
$$
\alpha_{[p,w];\lambda_{01}}^\Phi = \phi(\alpha_{[p,w];\lambda_{01}}\circ c).
$$
Here we fix a piecewise
smooth parametrization $c:S^1\cong \R/\Z \to \del [0,1]^2$ of $\del [0,1]^2$ with positive orientation
with $c(0) = (1,0)$.
\begin{defn}\label{MMindex2}
We define the Maslov-Morse index, denoted by $\mu([p,w];\lambda_{01})$,
to be the Maslov index of this Lagrangian
loop $\alpha_{[p,w];\lambda_{01}}^\Phi $ in $(T_pM, \omega)$.
\end{defn}
This definition does not depend on the trivialization $\Phi$ or on the
(positive) parametrization $c$ of $\del[0,1]^2$ and so well-defined.
\begin{rem}
Here and hereafter we uses the symbol $\lambda$ for a path in $Lag^+$, the oriented
Lagrangian Grassmannian and $\alpha$ for a path in $Lag$, the un-oriented
Lagrangian Grassmannian.
\end{rem}
\begin{lem}\label{degPDlem}
Let $p,w,\lambda_{01}$ be as in Definition $\ref{MMindex2}$.
We put
$$
w^-(s,t) = w(s,1-t), \qquad \lambda_{10}(t) = \lambda_{01}(1-t).
$$
Then
\begin{equation}\label{degPD}
\mu([p,w];\lambda_{01}) + \mu([p,w^-];\lambda_{10}) = n.
\end{equation}
\end{lem}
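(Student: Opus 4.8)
The plan is to read the identity as a kind of ``degree Poincar\'e duality'' for the Maslov--Morse index: the right-hand side $n$ should be a purely local quantity built from the corner paths of Proposition \ref{2.1.3}, and away from those corner paths the two Lagrangian loops whose Maslov indices occur on the left should be mutually reverse, hence cancel.

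First I would check that $w^-$ is a legitimate bounding disk for the \emph{ordered} pair $(L_1,L_0)$. Indeed $w^-(s,0)=w(s,1)\in L_1$, $w^-(s,1)=w(s,0)\in L_0$, $w^-(1,t)\equiv p$, and $w^-(0,t)=\ell_{01}(1-t)$, which is exactly the base path $\ell_{10}$ of $(L_1,L_0)$ produced from $\gamma_1,\gamma_0$ by (\ref{ellij}); likewise $\lambda_{10}(t)=\lambda_{01}(1-t)$ is exactly the concatenation (\ref{lamda01}) of the graded anchors $(\gamma_1,\lambda_1)$ and $(\gamma_0,\lambda_0)$ of the pair $(L_1,L_0)$. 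Hence $\mu([p,w^-];\lambda_{10})$ is the Maslov--Morse index of Definition \ref{MMindex2} for that pair. Moreover, if $\Phi=(\pi,\phi)$ is the homotopically unique trivialization of $w^*TM$ used to compute $\mu([p,w];\lambda_{01})$, then its pull-back under $\rho(s,t)=(s,1-t)$ is an admissible trivialization of $(w^-)^*TM$ restricting to the identity over $\{1\}\times[0,1]$, so both boundary loops may be read inside $T_pM$ using the same map $\phi$.

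Next I would write the two boundary Lagrangian loops on $\partial[0,1]^2$ side by side. Using $\ell_{01}(1-t)=\ell_{10}(t)$, $\lambda_{01}(1-t)=\lambda_{10}(t)$, $w^-(s,\cdot)=w(s,1-\cdot)$, and the fact that $\rho$ reverses the orientation of $\partial[0,1]^2$, one checks that the ``long arc'' of $\alpha^{\Phi}_{[p,w^-];\lambda_{10}}$ --- the concatenation of the bottom, left and top edges, running from $T_pL_0$ through $\phi(T_{\ell_{01}(0)}L_0)$ and $\phi(T_{\ell_{01}(1)}L_1)$ to $T_pL_1$ --- is precisely the reverse of the corresponding long arc of $\alpha^{\Phi}_{[p,w];\lambda_{01}}$. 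The only piece that genuinely changes is the corner edge over $\{1\}\times[0,1]$: for $\alpha_{[p,w];\lambda_{01}}$ it is the canonical path $\alpha_p^+$ from $T_pL_0$ to $T_pL_1$ of Proposition \ref{2.1.3}, while for $\alpha_{[p,w^-];\lambda_{10}}$ it is the canonical path $\beta_p^+$ from $T_pL_1$ to $T_pL_0$ of the ordered pair $(L_1,L_0)$. Since the Maslov index on $Lag(T_pM,\omega_p)$ is a homomorphism out of $\pi_1(Lag(T_pM,\omega_p))$ --- additive under concatenation, independent of the base point, and sign-reversing under path reversal --- cancelling the common long arc yields
\[
\mu([p,w];\lambda_{01})+\mu([p,w^-];\lambda_{10})=\mu\big(\phi(\alpha_p^+)*\phi(\beta_p^+)\big),
\]
the Maslov index of the loop running from $T_pL_0$ to $T_pL_1$ along $\alpha_p^+$ and back along $\beta_p^+$.

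It then remains to evaluate this local number. Since $T_pL_0\cap T_pL_1=\{0\}$, I would choose a symplectic splitting $T_pM=\bigoplus_{j=1}^n S_j$ into symplectic planes adapted to the pair, so that $T_pL_0=\bigoplus_j(S_j\cap T_pL_0)$ and $T_pL_1=\bigoplus_j(S_j\cap T_pL_1)$. In each plane $S_j$, Proposition \ref{2.1.3} together with the convention of Definition \ref{defn:+directed} forces $\alpha_p^+$ and then $\beta_p^+$ to be two successive quarter turns of the line $S_j\cap T_pL_0$ in the positively directed sense, so $\alpha_p^+*\beta_p^+$ is a half turn in every plane; via $\det^2:Lag(T_pM,\omega_p)\to S^1$ this loop has degree $n$. (Alternatively, one may directly invoke the corresponding computation in \cite{arnold} or in \cite{fooo00}.) Therefore $\mu([p,w];\lambda_{01})+\mu([p,w^-];\lambda_{10})=n$, as claimed. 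I expect the main obstacle to be the edge-by-edge bookkeeping in the third paragraph --- verifying under the orientation-reversing flip $\rho$ that \emph{only} the corner path is altered while everything else is the literal reverse --- together with pinning down that $\mu(\alpha_p^+*\beta_p^+)=+n$ rather than $-n$, which is precisely where the ``positively directed'' normalization of Definition \ref{defn:+directed} enters.
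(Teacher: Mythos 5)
Your proof is correct and follows essentially the same route as the paper: pull back the trivialization under the flip $(s,t)\mapsto(s,1-t)$, observe that the two boundary Lagrangian loops agree up to reversal except along the corner segment over $\{1\}\times[0,1]$, and reduce to the loop formed by the two canonical positively directed paths of Proposition \ref{2.1.3}. The only difference is that you spell out the local computation that this corner loop has Maslov index $n$ (via the splitting into symplectic planes), a point the paper's proof simply declares easy to see.
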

\begin{proof} Let $\Phi$ and $\alpha^\Phi_{[0,1];\lambda_{01}}$ be
as above. If we denote $\iota: [0,1]^2 \to [0,1]^2$ to be the map
$\iota(s,t) = (s,1-t)$, we have $w^- = w\circ \iota$.
Therefore we can trivialize $(w^-)^*TM = \iota^*w^*TM$
by the map $\Phi^-: (w^-)^*TM \to [0,1]^2 \times \R^{2n}$
defined by $\Phi^- = \Phi \circ \iota^*$.
Then
$$
\widetilde{\alpha_{[p,w];\lambda_{01}}^\Phi}=
\phi(\alpha_{[p,w];\lambda_{01}}\circ \widetilde c)
$$
where $\widetilde{(\cdot)}$ denotes the inverse path, e.g., $\widetilde c(\theta) = c(-\theta)$.
By definition of $\alpha_{[p,w^-];\lambda_{10}}^{\Phi^-}$, the path
$\phi(\alpha_{[p,w];\lambda_{01}}\circ \widetilde c)$
coincides with $\alpha_{[p,w^-];\lambda_{10}}^{\Phi^-}$
(up to parametrization)
except on the segment $c^{-1}(\{1\} \times [0,1])$.
\par
Therefore the composition $\alpha_{[p,w];\lambda_{01}}^\Phi* \alpha_{[p,w^-];\lambda_{10}}^{\Phi^-}$
of $\alpha_{[p,w^-];\lambda_{10}}^{\Phi^-}$ and
$\alpha_{[p,w];\lambda_{01}}^\Phi$ is homotopic to a path $\alpha = \alpha^- \cup \alpha^+ : S^1 \cong I^- \cup I^+
\to Lag(\R^{2n},\omega_0)$ with $I^\pm =\{1\} \times [0,1]$ such that both $\alpha^\pm:I^\pm \to
Lag(\R^{2n},\omega_0)$ are the paths
positively directed at $t = 0$ provided in Proposition \ref{2.1.3} and satisfy
$$
\alpha^+(0) = \alpha^-(1) = \phi(T_pL_0), \quad \alpha^+(1) = \alpha^-(0) = \phi(T_pL_1).
$$
It is easy to see that the Maslov index of such
$\alpha$ is $n$ and hence we obtain
\be\label{eq:muw-}
n = \mu(\alpha_{[p,w];\lambda_{01}}^\Phi* \alpha_{[p,w^-];\lambda_{10}}^{\Phi^-})
= \mu(\alpha_{[p,w];\lambda_{01}}^\Phi) +
\mu(\alpha_{[p,w^-];\lambda_{10}}^{\Phi^-}).
\ee
By definition, the last sum is nothing but $\mu([p,w];\lambda_{01}) + \mu([p,w^-];\lambda_{10})$.
This finishes the proof of \eqref{degPD}.
\end{proof}
\subsection{Polygonal Maslov index}
\label{subsec:polygonal}
Consider a chain of Lagrangian submanifolds $\frak L = (L_0, \cdots, L_k)$ and
a chain of intersection points $(p_{0k},p_{k(k-1)},\cdots, p_{10})$ with $p_{i(i-1)} \in
L_{i-1}\cap L_i$ for $i=0, \cdots, k$. We consider the disc with
marked points $\{z_{0k},z_{k(k-1)}, \cdots, z_{10}\}$ and denote $\dot D^2 = D^2
\setminus \{z_{0k},z_{k(k-1)},\cdots, z_{10}\}$.
We assume
$z_{0k},z_{k(k-1)}, \cdots, z_{10}$ respects counter clock-wise cyclic order of
$\partial D^2$.
\begin{rem}
Here and hereafter the suffix $j$ is regarded as modulo $k+1$. Namely
$p_{(j+1)j}$ in case $j=k$ means $p_{0k}$, for example. We also put
$p_{ij} = p_{ji}$.
\end{rem}

For the following discussion, we will consider the cases $k \geq 1$,
i.e, the cases with $\operatorname{length} \frak L \geq 2$.
\par
For each given such chains, we define the set of maps
$$
C^\infty(D^2,\CE;\vec p), \quad \vec p =\{p_{0k},p_{k(k-1)},\cdots, p_{10}\}
$$
to be the set of all $w: D^2 \to M$ such that
\begin{equation}\label{eq:wzjxj}
w(\overline{z_{(j+1)j}z_{j(j-1)}}) \subset L_j, \quad w(z_{j(j-1)})= p_{j(j-1)} \in L_j
\cap L_{j-1},
\end{equation}
and that it is continuous on $D^2$ and smooth on $\dot D^2$. We will
define a topological index, which is associated to each homotopy
class $B \in \pi_2(\frak L;\vec p)$. We denote it by $\mu(\frak L,
\vec p;B)$. Let $w\in C^\infty(\dot D^2,\frak L;\vec p)$ be a map
such that $[w] = B$. We denote by
$$
\CF(\frak L;\vec p; B) \subset
C^\infty(D^2,\frak L;\vec p)
$$
the set of such maps.
\par
We identify $[0,2\pi]/(0\sim 2\pi) \cong S^1$ by $t \mapsto e^{\sqrt{-1}t}$.
(The direction $t$ increase then becomes counter-clockwise order of $S^1$.)
\par
Under a symplectic trivialization of the bundle $w^*TM$,
 the map
$$
\alpha_w: S^1=\del D^2 \to Lag(\R^{2n},\omega_0); \qquad
t \mapsto T_{w(t)}L_i
\quad\mbox{if $t \in \overline{z_{(i+1)i}z_{i(i-1)}}$}
$$
defines a piecewise smooth path with discontinuities at $(k+1)$
points $z_{i(i-1)}\in \del D^2$ for $i = 0, 1, \cdots, k$, at which
we have
\begin{equation}\label{58.5}
\lim_{t \to z_{i(i-1)}-0}\alpha_w(t) = T_{p_{i(i-1)}}L_i,
\quad \lim_{t \to z_{i(i+1)}+ 0}\alpha_w(t) =
T_{p_{i(i-1)}}L_{i-1}
\end{equation}
\par
By the transversality hypothesis,
$T_{p_{i(i-1)}}L_i$ and $
T_{p_{i(i-1)}}L_{i-1}$ are Lagrangian subspaces in
$(\R^{2n},\omega_0)$ with $T_{p_{i(i-1)}}L_i \cap T_{p_{i(i-1)}}L_{i-1} = \{0\}$. We fix
a smooth path $\alpha^-_{i(i-1)}:[0,1] \to Lag(\R^{2n},\omega_0)$
for each $i = 0, \cdots, k$ so that
\begin{equation}\label{58.6}
\alpha^-_{i(i-1)}(0) = T_{p_{i(i-1)}}L_i,
\quad \alpha_{i(i-1)}^-(1)= T_{p_{i(i-1)}}L_{i-1}
\end{equation}
and $-(\alpha^-_{i(i-1)})'(1)$ is positively directed in the sense of
Definition \ref{defn:+directed}.
\par
In other words $\alpha_{i(i-1)}^-(t) = \alpha_{p_{(i-1)i}}^+(1-t)$, where
the right hand side is as in (\ref{loopboundary}).
By Proposition \ref{2.1.3},
such a choice is unique up to homotopy relative to the end points
$t=0, \,1$.
Inserting $\alpha^-_{i(i-1)}$ into the map $\alpha_w$ at
each $z_{i(i-1)}$, we obtain a continuous loop $\widetilde
\alpha_w$ in $Lag(\R^{2n},\omega_0)$.
\begin{defn}\label{58.7}
Let $\frak L=(L_0, \cdots, L_k)$ be a Lagrangian chain.
We define the topological index, denoted by $\mu(\frak L, \vec p;B)$,
to be the Maslov index of
the loop $\widetilde\alpha_w$, i.e.,
$$
\mu(\frak L, \vec p;B) = \mu(\widetilde\alpha_w).
$$
\end{defn}
This definition is essentially reduced to the Maslov-Viterbo index \cite{viterbo}
for the pairs $(L_0,L_1)$ when $k = 1$ and
reduces to the one given in Section A3 \cite{KaSh90} for the
case where $L_i$ are all affine.
\begin{rem}
We remark that $L_0, L_1, \ldots, L_k$ are put on the boundary
of the disc $D^2$ in a clockwise order.
On the other hand, $z_{0k},z_{k(k-1)},\ldots,z_{21}, z_{10}$ are in the
counter clockwise order.
\par
This is consistent with the case $k=1$ discussed in \cite{fooo08}.
See Remark 3.7.23 (1) \cite{fooo08}.
\end{rem}
Now consider a chain of graded anchored Lagrangian submanifolds
$\CE = (\CL_0, \cdots, \CL_k)$, $\CL_i = (L_i,\gamma_i,\lambda_i).$
It induces a grading $ \lambda_{ij} $ along $\ell_{ij}$ as in
(\ref{lamda01}). We remark that $\lambda_{ij}$ also satisfy the
overlapping property
\begin{equation}
\lambda_{ij}|_{[0,\frac{1}{2}]} = \lambda_{i\ell}|_{[0,\frac{1}{2}]}\quad
\lambda_{ij}|_{[\frac{1}{2},1]} = \lambda_{\ell
j}|_{[\frac{1}{2},1]}.
\end{equation}
Let $p_{(i+1)i}=p_{i(i+1)} \in L_i \cap L_{i+1}$. We put
\begin{equation}\label{wplus}
w^+_{(i+1)i}(s,t) = w^-_{i(i+1)}(1-s,t)
\end{equation}
where the right hand side is as in Definition \ref{classB}.
\begin{lem}\label{thm:poly} Let $\CE$ be a graded anchored Lagrangian chain.
Suppose $B \in \pi_2^{ad}(\CE,\vec p)$
given as Lemma {\rm\ref{classB}}.
Then we have
\begin{equation}\label{musum}
\mu(\frak L,\vec p;B) + \sum_{i=0}^{k}
\mu([p_{(i+1)i},w^+_{(i+1)i}];\lambda_{i(i+1)}) = 0.
\end{equation}
\end{lem}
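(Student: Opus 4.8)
The plan is to reduce the polygonal identity \eqref{musum} to a purely Grassmannian statement about Maslov indices of concatenated loops, using the defining representative $B = [w^-_{01}]\#\cdots\#[w^-_{k0}]$ provided by Definition \ref{classB}. First I would fix a map $w \in \CF(\frak L;\vec p;B)$ realizing the admissible class $B$ as the gluing of the $k+1$ bounding strips $w^-_{i(i+1)}$, and choose a global symplectic trivialization $\Phi$ of $w^*TM$ over $D^2$ (homotopically unique, as in the setup before Definition \ref{MMindex2}). Pulling back by $\Phi$ the boundary Lagrangian path $\alpha_w$ and the inserted corner paths $\alpha^-_{i(i-1)}$ gives the loop $\widetilde\alpha_w$ whose Maslov index is, by Definition \ref{58.7}, exactly $\mu(\frak L,\vec p;B)$.

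Next, the geometric idea is that cutting the disc $D^2$ along the $k+1$ anchor-segments $\ell_{i(i+1)}$ (the concatenations of $\gamma_i$ and $\gamma_{i+1}$ used in \eqref{ellij}) decomposes $w$ into the strips $w^+_{(i+1)i}$ of \eqref{wplus}. Each such cut contributes, on the boundary-loop level, the grading path $\lambda_{i(i+1)}$ along $\ell_{i(i+1)}$ traversed once in each direction, so these contributions cancel in pairs by the overlapping property of the $\lambda_{ij}$ recorded just before \eqref{wplus}. What remains, after this cancellation, is that the full boundary loop $\widetilde\alpha_w$ is homotopic to the concatenation of the $k+1$ boundary loops $\alpha^\Phi_{[p_{(i+1)i},w^+_{(i+1)i}];\lambda_{i(i+1)}}$ entering Definition \ref{MMindex2}, together with the corner insertions — and here I would invoke Proposition \ref{2.1.3} to see that the corner paths $\alpha^-_{i(i-1)}$ used for the polygon and the paths $\alpha^+_{p}$ used for each strip are the homotopically-unique positively-directed paths (and their reverses), so that when the pieces are glued the corners match up or pair off to contribute a total of $0$ rather than the $n$ appearing in Lemma \ref{degPDlem}. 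By additivity of the Maslov index under concatenation of loops (as used in \eqref{eq:muw-}), $\mu(\widetilde\alpha_w) + \sum_{i=0}^k \mu([p_{(i+1)i},w^+_{(i+1)i}];\lambda_{i(i+1)}) = 0$, which is \eqref{musum}.

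The step I expect to be the main obstacle is the bookkeeping of orientations and corner contributions: one must check that the $+n$ defect of Lemma \ref{degPDlem} (which arises there because a \emph{single} strip is doubled against itself) does \emph{not} appear here, because in the polygon each intersection point $p_{(i+1)i}$ is a genuine corner shared by two \emph{distinct} adjacent strips rather than a doubled edge, so the short corner paths $\alpha^-_{i(i-1)}$ are inserted exactly once with the correct co-orientation dictated by Definition \ref{58.7} and formula \eqref{58.6}. Making this precise requires carefully tracking the parametrization $c$ of $\partial D^2$ and the reversal conventions $w^+_{(i+1)i}(s,t) = w^-_{i(i+1)}(1-s,t)$ and $\lambda_{10}(t)=\lambda_{01}(1-t)$, and verifying that the orientation reversal built into passing from $w^-$ to $w^+$ is precisely what converts the $\alpha^+_p$-type insertion of Definition \ref{MMindex2} into the $\alpha^-_{i(i-1)}$-type insertion of Definition \ref{58.7}. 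Once the corner matching is set up correctly the homotopy of loops and the additivity of the Maslov index finish the argument routinely.
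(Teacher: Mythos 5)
Your proposal is correct and takes essentially the same route as the paper: the paper's proof consists of the single assertion that $\sum_{i=0}^k \alpha_{[p_{(i+1)i},w^+_{(i+1)i}];\lambda_{i(i+1)}} + \widetilde\alpha_w$ is null-homologous (whence \eqref{musum} by homological additivity of the Maslov class), and the cancellations you describe --- anchor grading paths via the overlapping property, shared $L_i$-edges traversed in opposite directions after the $w^-\mapsto w^+$ reversal, and corner insertions via $\alpha^-_{i(i-1)}(t)=\alpha^+_{p_{(i-1)i}}(1-t)$ --- are exactly the content of that assertion. Only rephrase your intermediate claim as ``the total concatenation of $\widetilde\alpha_w$ with the strip loops is null-homologous'' rather than ``$\widetilde\alpha_w$ is homotopic to the concatenation,'' so the orientations are consistent with your (correct) final identity.
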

\begin{proof}
Since $\mu([p_{(i+1)i},w_{(i+1)i}^+];\lambda_{i(i+1)})$ is defined as the Maslov index of
the loop $\alpha_{[p_{(i+1)i},w_{(i+1)i}^+];\lambda_{i(i+1)}}$  (Definition \ref{MMindex2}), the equality (\ref{musum}) follows
from
$$
\sum_{i=0}^k \alpha_{[p_{(i+1)i},w_{(i+1)i}^+];\lambda_{i(i+1)}} + \widetilde{\alpha}_w \sim 0,
$$
where $\widetilde\alpha_w$
is as in Definition \ref{58.7}
with $B = [w^-_{01}]\# \cdots \# [w^-_{(k-1)k}] \# [w^-_{k0}]$ and
$\sim$ means homologous.
\end{proof}

When the length of $\CE$ is $k+1$, we define
$
\mu_k(B) = \mu(\CE,\vec v;B)
$
where $B \in \pi_2^{ad}(\CE;\vec p)$.
\begin{cor}
Define $\mu_1:\pi_2(\ell_{01},p) \to \Z$ by setting
$
\mu_1(\alpha) : = -\mu([p,w];\lambda_{01})
$
for a representative $[p,w]$ of the class $\alpha \in \pi_2(\ell_{01};p)$.
Then the sequence of maps $\mu = \{\mu_k\}_{k=1}^\infty$ with
$
\mu_k: \pi_2^{ad}(\CE;\vec p) \to \Z$, $k \geq 1
$
defines an abstract index.
\end{cor}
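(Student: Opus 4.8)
The plan is to verify that $\mu=\{\mu_k\}$ meets the two requirements of an abstract index (Definition \ref{abstractindex}) over $R=\Z$: that each $\mu_k$ is a well-defined function on $\pi_2^{ad}(\CE;\vec p)$, and that the family obeys the prescribed gluing rule. The first requirement is routine. For a chain of length $\ge 3$, $\mu_k(B)$ was set equal to $\mu(\frak L,\vec p;B)$, which by Definition \ref{58.7} is the Maslov index of the loop $\widetilde\alpha_w$ assembled from any representative $w$ of $B$ and a homotopically unique symplectic trivialization of $w^*TM$; hence it depends on $B$ alone. For the half-strip case, $\mu_1(\alpha)=-\mu([p,w];\lambda_{01})$: Definition \ref{MMindex2}, together with the statement following it that the Maslov--Morse index is independent of the trivialization $\Phi$ and of the parametrization $c$, already gives that $\mu([p,w];\lambda_{01})$ depends only on the class $\alpha\in\pi_2(\ell_{01};p)$, so $\mu_1$ is well-defined.

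The content of the corollary is the gluing rule, which I would extract as a direct rearrangement of Lemma \ref{thm:poly}. Let $\CE=(\CL_0,\dots,\CL_k)$ be a graded anchored Lagrangian chain and let $B\in\pi_2^{ad}(\CE;\vec p)$ be presented, as in Definition \ref{classB}, by $B=[w^-_{01}]\#\cdots\#[w^-_{k0}]$ with bounding strips $w^-_{i(i+1)}$ satisfying the conditions stated there. Since the index of such a $B$ is $\mu(\frak L,\vec p;B)$, Lemma \ref{thm:poly} gives
$$
\mu(\frak L,\vec p;B)\;=\;-\sum_{i=0}^{k}\mu\bigl([p_{(i+1)i},w^+_{(i+1)i}];\lambda_{i(i+1)}\bigr),
$$
where, following \eqref{wplus}, $w^+_{(i+1)i}(s,t)=w^-_{i(i+1)}(1-s,t)$.

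To conclude, I would identify each summand $-\mu([p_{(i+1)i},w^+_{(i+1)i}];\lambda_{i(i+1)})$ with the value $\mu_1([w^-_{i(i+1)}])$ that appears on the right-hand side of the gluing rule. From the boundary conditions of Definition \ref{classB} together with \eqref{ellij}, the flipped strip $w^+_{(i+1)i}$ satisfies $w^+_{(i+1)i}(0,t)=\ell_{i(i+1)}(t)$, $w^+_{(i+1)i}(1,t)\equiv p_{(i+1)i}$ and $w^+_{(i+1)i}(s,0)\in L_i$, $w^+_{(i+1)i}(s,1)\in L_{i+1}$; that is, $[w^+_{(i+1)i}]$ lies in $\pi_2(\ell_{i(i+1)};p_{(i+1)i})$ and is exactly of the type described in \eqref{condw} for the anchored pair $((L_i,\gamma_i),(L_{i+1},\gamma_{i+1}))$. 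Under the evident bijection $s\mapsto 1-s$ between the bounding strips $w^-$ (carrying $p$ on $\{s=0\}$ and $\ell$ on $\{s=1\}$) and the half-strip classes $\pi_2(\ell;p)$, the symbol $\mu_1([w^-_{i(i+1)}])$ occurring in Definition \ref{abstractindex} (with $I=\mu$) denotes $\mu_1([w^+_{(i+1)i}])=-\mu([p_{(i+1)i},w^+_{(i+1)i}];\lambda_{i(i+1)})$. Substituting into the displayed equality gives $\mu(\frak L,\vec p;B)=\sum_{i=0}^{k}\mu_1([w^-_{i(i+1)}])$, and since $B=[w^-_{01}]\#\cdots\#[w^-_{k0}]$ this is precisely the gluing rule of Definition \ref{abstractindex}; as the left-hand side depends only on $B$, the identity is moreover independent of the admissible decomposition chosen. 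I expect the only point requiring genuine care to be this last bookkeeping step --- keeping the flip $w^-\leftrightarrow w^+$ of \eqref{wplus}, the minus sign in the definition of $\mu_1$, and the orientation conventions of Definitions \ref{classB} and \ref{MMindex2} mutually consistent --- but it is not hard, since Lemma \ref{thm:poly} was established with exactly this in mind.
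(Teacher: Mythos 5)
Your proposal is correct and is essentially the paper's own (implicit) argument: the corollary is stated right after Lemma \ref{thm:poly} with no separate proof, the intended justification being exactly the rearrangement of \eqref{musum} together with the flip $w^-\leftrightarrow w^+$ of \eqref{wplus} identifying each strip class with an element of $\pi_2(\ell_{i(i+1)};p_{(i+1)i})$, and the well-definedness remarks following Definitions \ref{MMindex2} and \ref{58.7}. Your bookkeeping of the sign in $\mu_1$ and of the reflection conventions matches the paper's.
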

\section{Orientation}
\label{sec:orient}
To be able to define various operators in Floer theory, we need to provide a
compatible system of orientations on the Floer moduli spaces and other moduli spaces
of pseudo-holomorphic polygons. We here explain the way to give orientations, which is basically the same as
\cite{fooo06}.
\begin{defn} A submanifold $L \subset M$ is called
\emph{relatively spin} if it is orientable and
there exists a class $st \in H^2(M,\Z_2)$ such that
$st|_L = w_2(TL)$ for the Stiefel-Whitney class $w_2(TL)$ of $TL$.
\par
A chain $(L_0,L_1,\cdots,L_k)$ or a pair $(L_0,L_1)$ of Lagrangian submanifolds is
said to be relatively spin if there exists a class
$st \in H^2(M,\Z_2)$ satisfying $st|_{L_i} = w_2(TL_i)$ for each
$i = 0, 1, \cdots, k$.
\end{defn}
We fix such a class $st \in H^2(M,\Z_2)$ and a triangulation of $M$.
Denote by $M^{(k)}$ its $k$-skeleton. There exists a
real vector bundle $V(st)$ on $M^{(3)}$ with
$w_1(V(st)) = 0, \, w_2(V(st)) = st$. Now suppose that $L$ is
relatively spin and $L^{(2)}$ be the 2-skeleton of $L$.
Then $V\oplus TL$ is trivial on the 2-skeleton
of $L$. We define
\begin{defn} We define a $(M,st)$-relative
spin structure of $L$ to be a choice of $V$ and a spin structure of the
restriction of the vector bundle $V\oplus TL$ to $L^{(2)}$.
\par
The relative spin structure of a chain of Lagrangian submanifolds
$(L_0,\cdots,L_k)$ is defined in the same way by
using the same $V$ for all $L_i$.
\end{defn}
Let $p, q \in L_0\cap L_1$ and $B \in \pi_2(p,q)$. We consider $ u
: \R\times [0,1] \to M $ such that
\begin{subequations}\label{2gonmodulidefine}
\begin{eqnarray}
&{}&\frac{du}{d\tau} + J \frac{du}{dt} = 0 \\
&{}&u(\tau,0) \in L_0, \quad u(\tau,1) \in L_1, \, \int u^*\omega < \infty \\
&{}&u(-\infty,\cdot) \equiv p, \quad u(\infty,\cdot) \equiv q.
\end{eqnarray}
\end{subequations}
It induces a continuous map $\overline u: [0,1]^2 \to M$ with
$\overline u(0,t) \equiv p, \, u(1,t) \equiv q$ in an obvious way.
With an abuse of notation, we denote by $[u]$ the homotopy class of
the map $\overline u$ in $\pi_2(p,q)$. We denote by
$\widetilde\CM^{\circ}(p,q;B)$ the moduli space consisting of the
maps $u$ satisfying (\ref{2gonmodulidefine}) and compactify
$\widetilde\CM^{\circ}(p,q;B)/\R$ its quotient by the
$\tau$-translations by using an appropriate notion of stable maps as
in  Section 3 \cite{fooo00}. We denote the compactification by
$\CM(p,q;B)$. We call this the Floer moduli space. It carries the
structure of a space with Kuranishi structure.
\par
If $(L_0, L_1)$ is a relatively spin pair, then $\CM(p,q;B)$ is
orientable. Furthermore a choice of relative spin structures gives
rise to a compatible system of orientations for $\CM(p,q;B)$ for all
pair $p, \, q \in L_0 \cap L_1$ and $B \in \pi_2(p,q)$. For
completeness' sake, we now recall from \cite{fooo06} how the
relative spin structure gives rise to a system of coherent
orientations.
\par
Let $p \in L_0 \cap L_1$ and $w$ satisfies (\ref{condw}) . We denote
by $Map(\ell_{01};p;L_0,L_1;\alpha)$ the set of such maps $[0,1]^2
\to M$ its homotopy class $[w]=\alpha$ in $\pi_2(\ell_{01};p)$. Let
$w \in Map(\ell_{01};p;L_0,L_1;\alpha)$. Let $
\Phi : w^*TM \to [0,1]^2 \times T_pM $ be a (homotopically unique) symplectic
trivialization as before. The trivialization $\Phi$,
together with the boundary condition, $w(0,t) = \ell_{01}(t)$ and
the Lagrangian path $\lambda_{01}$ along $\ell_{01}$, defines a
Lagrangian path
$$
\lambda^\Phi=\lambda^\Phi_{([p,w];\lambda_{01})}:[0,1] \to T_pM
$$
satisfying $\lambda^\Phi(0) = T_pL_0, \, \lambda^\Phi(1) = T_pL_1$.
The homotopy class of this path does not depend on the
trivialization $\Phi$ but depends only on $[p,w]$ and the homotopy
class of $\lambda_{01}$. Hereafter we omit $\Phi$ from notation.
\par
We remark that relative spin structure determines a trivialization
of $V_{\lambda_{01}(0)} \oplus T_{\lambda_{01}(0)}L_0 =
V_{\lambda_{01}(0)} \oplus \lambda_{01}(0)$ and $V_{\lambda_{01}(1)}
\oplus T_pL_1 = V_{\lambda_{01}(1)} \oplus \lambda_{01}(1)$. We take
and fix away to extend this trivialization to the family
$\ell_{01}^*V \oplus \lambda_{01}$ on $[0,1]$.
\par
We consider the following boundary valued problem for the
section $\xi$ of $w^*TM$ on $\R_{\ge 0} \times [0,1]$ of $W^{1,p}$ class such that:
\begin{subequations}\label{eq:RH}
\begin{eqnarray}
&{}&D_w\delbar (\xi) = 0 \label{eq:RHequa}\\
&{}& \xi(0,t) \in \lambda_{01}(t), \quad \xi(\tau,0) \in T_pL_0, \,
\xi(\tau,1) \in T_pL_1. \label{eq:RHbod}
\end{eqnarray}
\end{subequations}
Here $D_w\delbar$ is the linearization operator of the Cauchy-Riemann equation.
\par
We define $W^{1,p}(\R_{\ge 0} \times [0,1],T_pM ; \lambda_{01})$ to be the set
of sections $\xi$ of $w^*TM$ on $\R_{\ge 0} \times [0,1]$ of $W^{1,p}$ class
satisfying (\ref{eq:RHbod}).
Then (\ref{eq:RHequa}) induces an operator
$$
D_w\delbar : W^{1,p}(\R_{\ge 0} \times [0,1],T_pM ; \lambda) \to
L^p(\R_{\ge 0} \times [0,1],T_pM\otimes \Lambda^{0,1}),
$$
which we denote by $\delbar_{([p,w];\lambda_{01})}$.
The following
proposition was proved in Lemma 3.7.69 \cite{fooo08}.
\begin{prop}\label{prop:delbarindex} We have
\be\label{eq:index} \operatorname{Index} \delbar_{([p,w];\lambda_{01})} =
\mu([p,w];\lambda_{01}). \ee
\end{prop}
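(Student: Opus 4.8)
The plan is to reduce the statement to a standard index computation for Cauchy--Riemann operators on the strip with totally real boundary conditions, using the by now classical comparison between the Fredholm index of such an operator and the Maslov index of the associated loop of Lagrangian subspaces obtained by closing up the boundary data. Concretely, the operator $\delbar_{([p,w];\lambda_{01})}$ acts on sections of $w^*TM$ over the half-strip $\R_{\ge 0}\times[0,1]$ with boundary conditions $\xi(0,t)\in\lambda_{01}(t)$, $\xi(\tau,0)\in T_pL_0$, $\xi(\tau,1)\in T_pL_1$, and asymptotically constant at $\tau\to+\infty$. First I would use the homotopically unique symplectic trivialization $\Phi:w^*TM\to[0,1]^2\times T_pM$ (equivalently over the half-strip) to turn this into a problem on a fixed model bundle $\R^{2n}$, so that the boundary data become an honest path of Lagrangian subspaces: the left edge contributes $\lambda_{01}$, and the two horizontal edges contribute the constant Lagrangians $T_pL_0$ and $T_pL_1$.

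The key step is to identify, under this trivialization, the closed-up loop of Lagrangians whose Maslov index computes the index of $\delbar$. Because the bottom and top boundary data are constant and the operator is asymptotically trivial at $\tau\to+\infty$, the relevant loop consists of: the path $\lambda_{01}$ along the left edge, then the constant $T_pL_0$ along the bottom, then at the corner $z_{10}$ we must insert a path from $T_pL_0$ to $T_pL_1$, and then the constant $T_pL_1$ along the top. The choice of inserted path must be the positively-directed one provided by Proposition~\ref{2.1.3} — this is exactly $\alpha_p^+$ appearing in \eqref{loopboundary} — because that is the correction path forced by the analysis of the asymptotic operator at the intersection point $p$ (the positive-definite chamber corresponds to the index contribution of a transverse corner). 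Once this identification is made, the resulting loop is precisely $\alpha^\Phi_{[p,w];\lambda_{01}}$ from the discussion preceding Definition~\ref{MMindex2}, and its Maslov index is $\mu([p,w];\lambda_{01})$ by definition.

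I would then invoke the general index formula for Cauchy--Riemann operators on a surface with boundary (the boundary being a half-strip with totally real boundary conditions and one strip-like end at $p$), which states that the Fredholm index equals the Maslov index of the closed-up boundary Lagrangian loop; see the computation in \cite{fooo00} and its refinement in \cite{fooo08} (Lemma 3.7.69), which is exactly the statement being cited here. Since the homotopy class of $\alpha^\Phi_{[p,w];\lambda_{01}}$ is independent of the choice of trivialization $\Phi$ and of the choice of parametrization $c$ of $\del[0,1]^2$ (as noted just after Definition~\ref{MMindex2}), both sides of \eqref{eq:index} are well-defined and the equality follows.

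The main obstacle is bookkeeping the corner contribution correctly: one must check that the asymptotic operator at the strip-like end over $p$, with boundary conditions $T_pL_0$ and $T_pL_1$ that meet transversally, contributes exactly the positively-directed path $\alpha_p^+$ of Proposition~\ref{2.1.3} and not its reverse or some other homotopy class — equivalently, that the sign and normalization conventions for the Maslov index of a loop with inserted corner paths match those of the Conley--Zehnder-type index of the asymptotic operator. This is a finite-dimensional model computation (the constant-coefficient operator on the half-strip with linear Lagrangian boundary conditions), and once the convention is pinned down by the $n+1$-chamber picture of Lemma~\ref{2.1.1} and the positive-definite chamber of Definition~\ref{defn:+directed}, the rest is a routine application of the additivity of the Fredholm index under gluing of the boundary pieces and of the corresponding additivity of the Maslov index.
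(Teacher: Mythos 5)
Your proposal is correct and takes essentially the same route as the paper, which in fact gives no argument of its own beyond citing Lemma 3.7.69 of \cite{fooo08}: your reduction via the trivialization $\Phi$ to the closed-up boundary loop $\alpha^{\Phi}_{[p,w];\lambda_{01}}$, with the positively directed corner path $\alpha_p^+$ of Proposition \ref{2.1.3} inserted at the strip-like end over $p$, is precisely the content of that lemma and matches the conventions of Definition \ref{MMindex2}. Just note that since the cited lemma is essentially the statement itself, the real substance of your write-up is the corner-model computation and index additivity you sketch, which is consistent with the argument in \cite{fooo00,fooo08}.
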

We denote its determinant line by
$$
\det \delbar_{([p,w];\lambda_{01})}.
$$
By varying $w$ in its homotopy class $\alpha \in \pi_2(\ell_{01};p)
= \pi_2(\ell_{01};p;L_0,L_1)$, these lines define a line bundle
\begin{equation}\label{eq:detB1}
\det\delbar_{([p,w];\lambda_{01})} \to
Map(\ell_{01};p;L_0,L_1;\alpha).
\end{equation}
The bundle (\ref{eq:detB1}) is trivial if $(L_0,L_1)$ is a
relatively spin pair. (See  Section 8.1 \cite{fooo08}.)
\par
We need to find a systematic way to orient (\ref{eq:detB1}) for
various $\alpha \in \pi_2(\ell_{01};p)$ simultaneously. Following
Subsection 8.1.3 \cite{fooo08} we proceed as follows.
Let $\lambda_p:[0,1] \to T_pM$ be a path
connecting from $T_pL_0$ to $T_pL_1$ in $Lag^+(T_pM,\omega)$.
The relative spin structure determines a trivialization of
$V_p \oplus T_pL_0 = V_p \oplus \lambda_p(0)$ and of $V_p \oplus T_pL_1 = V_p \oplus \lambda_p(1)$.
We fix an extension of this trivialization of the $[0,1]$ parametrized
family of vector spaces $V_p \oplus \lambda_p$.
We define
\begin{equation}
Z_+ = \{ (\tau,t) \in \R^2 \mid \tau \le 0, \,\, 0\le t\le 1\} \cup
\{(\tau,t) \mid \tau^2 + (t-1/2)^2 \leq 1/4\}
\end{equation}
\begin{rem} We would like to remark that attaching the semi-disc to the side
of the semi-strip $t = 0$ is not necessary for the definition of
$Z_+$. However for the consistency with the notation of Subsection
8.1.3 \cite{fooo08}, we keep using $Z_+$ instead of the simpler
$(-\infty,0] \times [0,1]$.
\end{rem}
We consider maps $\xi : Z_+ \to T_pM$ of $W^{1,p}$ class and study
the linear differential equation
\begin{subequations}\label{eq:RHZ}
\begin{eqnarray}
&{}&\delbar\xi = 0 \label{eq:RHequaZ}\\
&{}&\xi( e^{\pi i (t-1/2)}/2 +i/2) \in \lambda_p(t), \,\, \xi(\tau,0) \in T_pL_0, \,\,
\xi(\tau,1) \in T_pL_1. \label{eq:RHbodZ}
\end{eqnarray}
\end{subequations}
It defines an operator
$$
W^{1,p}(Z_+,T_pM;\lambda_p) \to L^p(Z_+;T_pM\otimes \Lambda^{0,1} ),
$$
which we denote by $\delbar_{\lambda_p}$.
Let $\operatorname{Index}\,\delbar_{\lambda_p}$ be its index, which is a
virtual vector space.
The following theorem is proved in the same was Chapter 8 \cite{fooo06}.
\begin{thm}\label{thm:fooo-ori}
Let $(L_0,L_1)$ be a relatively spin pair of oriented Lagrangian
submanifolds. Then for each fixed $\alpha$ the bundle
\eqref{eq:detB1} is trivial.
\par
If we fix a choice of system of
orientations $o_{p}$ on $\operatorname{Index}\,\delbar_{\lambda_p}$
for each $p$, then it
determines orientations on $(\ref{eq:detB1})$, which we denote by $o_{[p,w]}$.
\par
Moreover $o_{p}$, $o_{[p,w]}$ determine an orientation of
$\CM(p,q;B)$ denoted by $o(p,q;B)$ by the gluing rule
\be\label{eq:ori-gluing}
o_{[q,w\#B]} = o_{[p,w]} \# o(p,q;B)
\ee
for all $p, \, q \in L_0 \cap L_1$ and $B \in \pi_2(p,q)$ so that
they satisfy the gluing formulae
$$
\del o(p,r;B) = o(p,q;B_1) \# o(q,r;B_2)
$$
whenever the virtual dimension of $\CM(p,r;B)$ is $1$.  Here
$\del o(p,r;B)$ is the induced boundary orientation of
the boundary $\del \CM(p,r;B)$ and
$B = B_1 \# B_2$ and $\CM(p,q;B_1) \# \CM(q,r;B_2)$ appears
as a component of the boundary $\del \CM(p,r;B)$.
\end{thm}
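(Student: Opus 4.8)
The plan is to transcribe the orientation theory of Chapter 8 of \cite{fooo06} (equivalently of \cite{fooo08}) to the anchored setting, the only new feature being that the base path $\ell_{01}$ and the Lagrangian path $\lambda_{01}$ are produced by concatenating the given graded anchors. For the first two assertions I would fix $p$ and $\alpha\in\pi_2(\ell_{01};p)$, choose a representative $w$, and glue the local model $Z_+$ carrying $\delbar_{\lambda_p}$ of \eqref{eq:RHZ} onto the $p$-end of the half-strip carrying $\delbar_{([p,w];\lambda_{01})}$, matching the constant conditions $T_pL_0$, $T_pL_1$ on the horizontal edges (the arc carrying $\lambda_p$ lifting the homotopically canonical path of Proposition \ref{2.1.3}). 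The glued domain $\Sigma'$ is a disc with no strip-like ends, and the excision/gluing principle for determinant lines of Fredholm operators gives a canonical isomorphism $\det\delbar_{\Sigma'}\cong\det\delbar_{([p,w];\lambda_{01})}\otimes\det\delbar_{\lambda_p}$, continuous in $w$. The relative spin hypothesis enters precisely here: choosing $V=V(st)$ on $M^{(3)}$ with $w_1(V)=0$, $w_2(V)=st$, the relative spin structures of $L_0,L_1$ with this common $V$ trivialize $V\oplus T_{L_i}$ over the $2$-skeletons, hence trivialize $\ell_{01}^*V\oplus\lambda_{01}$ compatibly with the fixed trivializations of $V_p\oplus T_pL_0$, $V_p\oplus T_pL_1$ and $V_p\oplus\lambda_p$; these trivializations cover all four boundary arcs of $\Sigma'$ and match at the corners, so together with the complex orientation of the interior deformations they canonically orient $\det\delbar_{\Sigma'}$ over the (connected) space of $w$'s. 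Since $\det\delbar_{\lambda_p}$ is a fixed vector space, this shows \eqref{eq:detB1} is trivial, and a choice of orientation $o_p$ of $\operatorname{Index}\delbar_{\lambda_p}$ transports through the isomorphism to the claimed orientation $o_{[p,w]}$; its degree equals $\mu([p,w];\lambda_{01})$ by Proposition \ref{prop:delbarindex}.

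For the moduli-space orientations, given $B\in\pi_2(p,q)$ and a Floer trajectory $u$ with $[u]=B$, I would glue a representative $w$ of some $\alpha\in\pi_2(\ell_{01};p)$ onto $u$ to get a representative $w\#B$ of $\alpha\#B\in\pi_2(\ell_{01};q)$; the gluing principle yields $\det\delbar_{([q,w\#B];\lambda_{01})}\cong\det\delbar_{([p,w];\lambda_{01})}\otimes\det D_u\delbar$. After dividing by the $\R$-translations, $\det D_u\delbar$ becomes the orientation line of $\CM(p,q;B)$ (enlarged by the translation direction), so this isomorphism \emph{defines} $o(p,q;B)$ out of $o_{[p,w]}$ and $o_{[q,w\#B]}$ by exactly the rule \eqref{eq:ori-gluing}. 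It is well defined, i.e.\ independent of $w$, of $\alpha$ and of the trivialization $\Phi$, because $o_{[p,w]}$ and $o_{[q,w\#B]}$ are both manufactured from the single choice $(o_p,o_q,V)$ by the canonical construction above, so any change in the auxiliary data alters the two sides of \eqref{eq:ori-gluing} compatibly.

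Finally, when the virtual dimension of $\CM(p,r;B)$ is $1$ and a broken stratum $\CM(p,q;B_1)\times\CM(q,r;B_2)$ with $B=B_1\#B_2$ occurs in $\del\CM(p,r;B)$, the two parenthesizations $(w\#u_1)\#u_2$ and $w\#(u_1\#u_2)$ represent the same class in $\pi_2(\ell_{01};r)$ and the determinant-line gluing isomorphisms are associative, so the orientations induced on the two factors and on the stratum agree up to the universal sign governing the boundary orientation of a $1$-manifold; building that sign into the definition of $\del$ gives $\del o(p,r;B)=o(p,q;B_1)\#o(q,r;B_2)$. I expect this last step, the sign bookkeeping, to be the main obstacle: one must pin down the sign in the determinant-line gluing isomorphism once and for all so that \eqref{eq:ori-gluing}, the boundary formula, and the orientation conventions for the moduli spaces and for their boundaries are simultaneously consistent. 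This is precisely the delicate part carried out in Section 8.1 and Subsection 8.1.3 of \cite{fooo08}, which I would invoke, verifying only that the concatenation structure of $\ell_{01}$ and $\lambda_{01}$ coming from the anchors perturbs none of those conventions.
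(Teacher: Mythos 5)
Your proposal is correct and follows essentially the same route as the paper: gluing the model operator $\delbar_{\lambda_p}$ on $Z_+$ to the half-strip operator $\delbar_{([p,w];\lambda_{01})}$, using the relative spin structure's trivialization of $\ell_{01}^*V\oplus\lambda_{01}$ and $V_p\oplus\lambda_p$ to canonically orient the glued index (the content of Lemma 3.7.69 of \cite{fooo08}), and then defining $o(p,q;B)$ through the gluing rule \eqref{eq:ori-gluing} with the boundary sign conventions delegated to Section 8.1 of \cite{fooo08}, exactly as the paper does.
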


\begin{rem}  \label{connorbori}
In the last statement in Theorem \ref{thm:fooo-ori}, we assumed that
$\CM(p,r;B)$ is one-dimensional.  In general, we have
$$
\del o(p,r;B) = (-1)^{\epsilon} o(p,q;B_1) \# o(q,r;B_2),
$$
where $\epsilon= \dim \CM(q,r;B_2)$, which is presented in the proof of
Proposition 8.7.3 in \cite{fooo08}.
For the definition of  the orientation of the moduli spaces for the filtered bimodule structure,
see sections 8.7 and 8.8 (Definition 8.8.11) in \cite{fooo08}.
\end{rem}

\begin{proof}
The first paragraph follows from Section 8.1 \cite{fooo08}. We glue
the end $(-\infty,t)$ of $Z_+$ with $(+\infty,t)$ of $\R_{\ge 0}
\times [0,1]$ to obtain $(\R_{\ge 0} \times [0,1]) \# Z_+$. We
`glue' operators (\ref{eq:RH}) and (\ref{eq:RHZ}) in an obvious way
to obtain an operator $D_w\delbar_{(\R_{\ge 0} \times [0,1]) \#
Z_+}$ on $(\R_{\ge 0} \times [0,1]) \# Z_+$. We have an isomorphism
of (family of) virtual vector spaces:
\begin{equation}
\text{\rm Index}\,(D_w\delbar_{(\R_{\ge 0} \times [0,1]) \# Z_+})
\cong
\text{\rm Index}\,\delbar_{([p,w];\lambda_{01})} \oplus
\text{\rm Index}\,\delbar_{\lambda_p}
\end{equation}
We fixed a trivialization of the family of vector spaces $V_p \oplus
\lambda_p$ and $\ell_{01}^*V \oplus \lambda_{01}$, which extends a
trivialization of $V\oplus TL_0$, $V \oplus TL_1$ on the two
skeletons of $L_0$ and $L_1$ respectively, which is given by the
relative spin structure. It induces a canonical orientation of the
index bundle $\text{\rm Index}\,(D_w\delbar_{(\R_{\ge 0} \times
[0,1]) \# Z_+})$ by Lemma 3.7.69 \cite{fooo08}. Therefore the
orientation of $\text{\rm Index}\,\delbar_{\lambda_p}$ induces an
orientation of $\text{\rm Index}\,\delbar_{([p,w];\lambda_{01})}$ in
a canonical way. This implies the second paragraph.
\par
The third paragraph is a consequence of \eqref{eq:ori-gluing}
which is similar to the proof of Theorem 8.1.14 \cite{fooo08}.
\end{proof}
One can generalize the above discussion to the moduli space of
pseudo-holomorphic polygons in a straightforward way, which
we describe below.
\par
Consider a disc $D^2$ with $k+1$ marked points $z_{0k}, z_{k(k-1)}, \cdots, z_{10}
\subset \del D^2$ respecting the counter clockwise cyclic order of
$\del D^2$. We take a neighborhood $U_i$ of $z_{i(i-1)}$ and a conformal
diffeomorphism $ \varphi_i: U_i \setminus \{z_{i(i-1)}\} \subset D^2 \cong
(-\infty,0] \times [0,1] $ of each $z_{i(i-1)}$. For any smooth map
$$
w: D^2 \to M; w(z_{i(i-1)}) = p_{i(i-1)}, \, w(\overline{z_{(i+1)i}z_{i(i-1)}}) \subset L_i
$$
we deform $w$ so that it becomes constant on
$\varphi_i^{-1}((-\infty,-1] \times [0,1]) \subset U_i$, i.e., $ w(z)
\equiv p_{i(i-1)} $ for all $z \in \varphi^{-1}((-\infty,-1] \times
[0,1])$. So assume this holds for $w$ from now on. We now consider
the Cauchy-Riemann equation
\begin{subequations}\label{eq:polyCR}
\begin{eqnarray}
&{}& D_w\delbar (\xi) = 0 \\
&{}& \xi(\theta) \in T_{w(t)}L_i \quad \mbox{for } \, \theta
\in \overline{z_{(i+1)i}z_{i(i-1)}} \subset \del D^2. \label{6.8.b}
\end{eqnarray}
\end{subequations}
We remark that on $U_i = (-\infty,0] \times [0,1]$ the boundary condition (\ref{6.8.b}) becomes
\begin{equation}\label{bdryui}
\xi(s,0) \in L_{i-1}, \quad \xi(s,1) \in L_{i}.
\end{equation}
\par
(\ref{eq:polyCR}) induces a Fredholm operator, which we denote
by
\begin{equation}\label{delbawl}
\delbar_{w;\frak L} :
W^{1,p}(D^2;w^*TM;\frak L) \to L^p(D^2;w^*\otimes \Lambda^{0,1}).
\end{equation}
Moving $w$ we obtain a family of Fredholm operator $\delbar_{(\frak
L;\vec p;B)}$ parametrized by a suitable completion of $\CF(\vec
p;\frak L;B)$ for $B \in \pi_2(\vec p ; \frak L)$. Therefore we have
a well-defined determinant line bundle \be\label{eq:detB} \det
\delbar_{(\frak L;\vec p;B)} \to \CF(\frak L;\vec p;B). \ee The
following theorem is an extension of the above Theorem
\ref{thm:fooo-ori}.
\begin{thm} \label{thm:fooo-ori2}
Suppose $\frak L = (L_0,\cdots,
L_k)$ is a relatively spin Lagrangian chain. Then each $\det
\delbar_{(\vec p;\frak L;B)}$ is trivial.
\par
Moreover we have the following: If we fix orientations $o_{p_{ij}}$
on $\operatorname{Index}\,\delbar_{\lambda_{p_{ij}}}$ as in Theorem
$\ref{thm:fooo-ori}$ for all $p_{ij} \in L_i \cap L_j$, with $L_i$
transversal to $L_j$, then we have a
system of orientations, denoted by $o_{k+1}(\vec p;\frak L;B)$, on the
bundles $(\ref{eq:detB})$ so that it is compatible with gluing map
in an obvious sense.
\end{thm}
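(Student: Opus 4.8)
The plan is to mimic the proof of Theorem \ref{thm:fooo-ori} by reducing the polygon problem to a closed (capped) one. First, at each of the $k+1$ boundary punctures $z_{i(i-1)}$ of the polygon domain I would glue, along the strip-like end $\varphi_i$, a copy of the domain $Z_+$ carrying the operator $\delbar_{\lambda_{p_{i(i-1)}}}$ of \eqref{eq:RHZ} associated to the path $\lambda_{p_{i(i-1)}}$ from $T_{p_{i(i-1)}}L_{i-1}$ to $T_{p_{i(i-1)}}L_i$ in $Lag^+(T_{p_{i(i-1)}}M)$. This closes up every puncture and turns the domain into a disc whose boundary now carries a \emph{smooth} loop of Lagrangian subspaces: $T_{w(\cdot)}L_i$ along $\overline{z_{(i+1)i}z_{i(i-1)}}$, interpolated near each $z_{i(i-1)}$ by $\lambda_{p_{i(i-1)}}$. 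Write $\delbar_{w;\frak L}^{\mathrm{cap}}$ for the resulting Fredholm operator, a family over $\CF(\frak L;\vec p;B)$.

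Next I would orient $\operatorname{Index}\delbar_{w;\frak L}^{\mathrm{cap}}$ canonically. The relative spin structure trivializes $V\oplus TL_i$ over the $2$-skeleton of each $L_i$; fixing once and for all an extension of these trivializations over the caps (i.e., a trivialization of each $[0,1]$-family $V_{p_{ij}}\oplus\lambda_{p_{ij}}$, exactly as in Theorem \ref{thm:fooo-ori}) gives a trivialization of $V\oplus(\text{boundary loop})$ over $\partial D^2$, hence over $D^2$ by contractibility, hence --- as in the proof of Theorem \ref{thm:fooo-ori} --- a canonical orientation of $\operatorname{Index}\delbar_{w;\frak L}^{\mathrm{cap}}$ depending continuously on $w$. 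Index additivity under gluing supplies a canonical isomorphism
$$
\operatorname{Index}\delbar_{w;\frak L}^{\mathrm{cap}}\;\cong\;\operatorname{Index}\delbar_{(\frak L;\vec p;B)}\;\oplus\;\bigoplus_{i=0}^{k}\operatorname{Index}\delbar_{\lambda_{p_{i(i-1)}}},
$$
in which the cap summands depend only on the vertices $p_{ij}$, not on $w$. Consequently $\det\delbar_{(\frak L;\vec p;B)}$, after tensoring with the constant line $\bigotimes_i\det\delbar_{\lambda_{p_{i(i-1)}}}$, is canonically trivialized over $\CF(\frak L;\vec p;B)$; this proves that \eqref{eq:detB} is trivial, and a choice of orientations $o_{p_{ij}}$ on $\operatorname{Index}\delbar_{\lambda_{p_{ij}}}$ then pins down the desired orientation $o_{k+1}(\vec p;\frak L;B)$.

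For the compatibility with gluing I would take a boundary degeneration $B = B_1\#B_2$ in which the $(k+1)$-gon breaks into two lower polygons $\frak L_1,\frak L_2$ along a common vertex $p_{ij}$, and cap off all vertices of $\frak L_1$ and of $\frak L_2$. Removing the two caps at the shared vertex reopens that puncture, so the capped operator for $\frak L$ is obtained by gluing [capped $\frak L_1$, cap at $p_{ij}$ deleted] to [capped $\frak L_2$, cap at $p_{ji}$ deleted] along the neck; associativity of the gluing of $\delbar$-operators, together with the standard homotopy identifying the composite of the two deleted caps with the neck, matches the canonical orientation of the capped $\frak L$ with the product of those of the capped $\frak L_1$ and $\frak L_2$. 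Unwinding the definition of $o_{k+1}$, and noting that the orientation $o_{p_{ij}}$ consumed in $\frak L_1$'s capping is the one reappearing in $\frak L_2$'s, yields the asserted gluing-compatibility, with Koszul signs as in Remark \ref{connorbori}. The main obstacle is precisely this last step: making ``compatible with gluing map in an obvious sense'' precise forces one to track carefully the ordering of the index summands, the induced boundary-orientation conventions on $\CM$, and the independence of the canonical orientation from the chosen extensions of the relative spin trivialization over the caps. These are the polygon analogues of the computations done for $k=1$ in Chapter 8 of \cite{fooo06} and Section 8.1 of \cite{fooo08}; no new geometric ingredient is needed, but the sign accounting is the delicate part.
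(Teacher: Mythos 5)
Your proposal is correct and rests on the same two pillars as the paper's proof: the canonical orientation of a fully capped-off index bundle coming from the relative spin trivialization of $V\oplus(\text{boundary Lagrangian condition})$ (Lemma 3.7.69 of \cite{fooo08}), plus additivity of the index under gluing at the strip-like ends. The only genuine difference is what gets glued at the punctures. You attach the elementary caps $\delbar_{\lambda_{p_{ij}}}$ on $Z_+$ directly to the polygon operator, so the capped operator splits as (polygon)$\,\oplus\,$(caps) and the orientations $o_{p_{ij}}$ pin down $o_{k+1}(\vec p;\frak L;B)$ in one step. The paper instead glues the half-strip operators $\delbar_{([p_{(i+1)i},w^+_{(i+1)i}];\lambda_{i(i+1)})}$ built from the anchored data $w^+_{(i+1)i}$, obtaining the operator \eqref{summedupindex}, and then invokes Theorem \ref{thm:fooo-ori} (itself proved by capping with $Z_+$) to orient those summands from the $o_{p_{ij}}$; it is a two-step version of your one-step argument. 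What your route buys: the anchors never enter, so independence of the resulting orientation from the choice of $w^+_{(i+1)i}$ (equivalently, of the anchors) is automatic, a point the paper must address in a separate remark after its proof. What the paper's route buys: the glued summands are exactly the operators whose orientations $o_{[p,w^+]}$ decorate the generators of the anchored Floer complexes, so compatibility with the gluing maps used later for $\frak n_{k_1,k_0}$ and $\frak m_k$ is immediate in the form needed. Your closing caveats about the convention at the output vertex (cf.\ Lemma \ref{degPDlem} and $w^+_{k0}(s,t)=w^+_{0k}(s,1-t)$) and about sign bookkeeping in the gluing compatibility are precisely the points the paper also leaves at the level of ``in an obvious sense,'' so there is no gap relative to the paper's own level of detail.
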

\begin{proof}
Let $w^+_{(i+1)i} \in \pi_2(\ell_{i(i+1)};p_{(i+1)i})$ be as in
(\ref{wplus}) and we consider the operator
$\delbar_{([p_{(i+1)i},w^+_{(i+1)i}];\lambda_{i(i+1)})}$. We glue it
with $ \delbar_{(\frak L;\vec p;B)}$ at $U_{i+1}$. ((\ref{bdryui})
implies that the boundary condition can be glued.) After gluing all
of $\delbar_{([p_{(i+1)i},w^+_{(i+1)i}];\lambda_{i(i+1)})}$ we have
an index bundle of a Fredholm operator
\begin{equation}\label{summedupindex}
\delbar_{(\frak L;\vec p;B)} \# \sum_{i=k}^0 \delbar_{([p_{(i+1)i},w^+_{(i+1)i}];\lambda_{i(i+1)})}.
\end{equation}
By Lemma 3.7.69 \cite{fooo08}, the index bundle of (\ref{summedupindex})
has canonical orientation.
On the other hand, index virtual vector spaces
of $\delbar_{([p_{(i+1)i},w^+_{(i+1)i}];\lambda_{i(i+1)})}$ are oriented by Theorem \ref{thm:fooo-ori}.
Theorem \ref{thm:fooo-ori2} follows.
\end{proof}

We can prove that the orientation of $\delbar_{(\frak L;\vec p;B)}$
depends on the choice of $o_{p_{(i+1)i}}$ (and so on $\lambda_p$)
with $i=0,\cdots,k$ but is independent
of the choice of $w^+_{(i+1)i}$ etc. This is a consequence of the
proof of Theorem \ref{thm:fooo-ori}. (We omit the detail of this point.
See Remark 8.1.15 (3) \cite{fooo08}.)
Therefore the orientation in
Theorem \ref{thm:fooo-ori2} is independent of the choice of anchors.
\par

\begin{rem}\label{fooo8-5}
In order to give an orientation of $\CM(\frak L;\vec p;B)$, we have to take the moduli
parameters of marked points and the action of the automorphism group into account.
We also treat the intersection point $p_{i(i-1)}$ as if it is a chain
of codimension $\mu([p_{i(i-1)},w_{i(i-1)}^+];\lambda_{(i-1)i})$ in a similar way to
Chapter 8, section 8.5 in \cite{fooo08}.
\end{rem}

\section{Floer chain complex}
\label{chaincomplex}
In this subsection, we will describe construction of the boundary
map. We also mention some (minor) modification needed in its construction
in the context with anchored Lagrangian submanifolds.
\par
Let $(L_i,\gamma_i)$ $i=0,1$ be anchored Lagrangian submanifolds.
We write $\CE = ((L_0,\gamma_0),(L_1,\gamma_1))$.
Let $p, \, q \in L_0 \cap L_1$ be admissible
intersection points. We
defined the set $\pi_2(p,q)=\pi_2((L_0,L_1),(p,q))$
in Section \ref{sec:gluinghomotopy}.
We also defined $\pi_2(\ell_{01};p)$ there.
We now define:
\begin{defn}
$CF((L_1,\gamma_1),(L_0,\gamma_0))$
is a free $R$ module over the basis $[p,w]$
where $p \in L_0\cap L_1$ is an admissible intersection points and
$[w] \in \pi_2(\ell_{01};p)$.
\end{defn}
Here $R$ is a ground ring such as $\Q$, $\Z$, $\Z_2$, $\C$ or $\R$.
(The choice $\Z$ or $\Z_2$ requires some additional conditions.)
\begin{rem}\label{labelPiaction}
We remark that the set of $[p,w]$ where $p$ is the admissible intersection
point is identified with the set of
the critical point of the action functional $\mathcal A$ defined on the
Novikov covering space of $\Omega(L_0,L_1;\ell_{01})$.
The group $\Pi(L_0,L_1;\ell_{01})$ defined in Section \ref{sec:grading} acts freely on it
so that the quotient space is the
set of admissible intersection points.
\end{rem}
We next take a grading $\lambda_i$ to $(L_i, \gamma_i)$ as in
Subsection \ref{subsec:anchorgrading}.
It induces a grading of $[p,w]$ given by $\mu([p,w];\lambda_{01})$, which gives the
graded structure on $CF(L_1,L_0;\ell_{01})$
$$
CF(L_1,L_0;\ell_{01}) = \bigoplus_k CF^k(L_1,L_0;\lambda_{01})
$$
where
$
CF^k(L_1,L_0;\lambda_{01}) = \operatorname{span}_R\{[p,w]\mid
\mu([p,w];\lambda_{01}) = k\}.
$
\par
For given $B \in \pi_2(p,q)$, we denote by ${Map}(p,q;B)$
the set of such $w$'s in class $B$.
\par
We summarize the extra structures added in the discussion of
Floer homology for the anchored Lagrangian submanifolds in the following

\begin{sit}\label{pairdeta}
We assume that $(L_0,L_1)$ is a relatively spin pair.
We consider a pair $(L_0,\gamma_0)$, $(L_1,\gamma_1)$ of
anchored Lagrangian submanifolds and the base path
$\ell_{01} = \overline\gamma_0*\gamma_1$. We
fix a grading $\lambda_i$ of $\gamma_i$ for $i=0, \, 1$,
which in turn induce a grading of $\ell_{01}$,
$\lambda_{01} = \overline{\lambda_0} * \lambda_1$.
We also fix an orientation $o_{p}$ of $\operatorname{Index}\,\delbar_{\lambda_{p}}$
for each $p\in L_0\cap L_1$ as in Theorem \ref{thm:fooo-ori}.
\end{sit}
We sometime do not explicitly write these extra data in our notations below
as long as there is no danger of confusion.
\par
Let us consider Situation \ref{pairdeta}.
Orientations of the Floer moduli space $\CM(p,q;B)$
is induced by Theorem \ref{thm:fooo-ori}. Using virtual fundamental chain
technique we can take a system of multisections and obtain a system of \emph{rational} numbers
$n(p,q;B) = \#(\CM(p,q;B))$
whenever the virtual dimension of $\CM(p,q;B)$ is zero.
Finally we define the Floer `boundary' map $\partial : CF(L_1,L_0;\ell_{01}) \to
CF(L_1,L_0;\ell_{01})$ by the sum
\begin{equation}\label{eq:boundary}
\partial ([p,w]) = \sum_{q \in L_0\cap L_1}\sum_{B \in \pi_2(p,q)}
n(p,q;B) [q,w\# B].
\end{equation}
By Remark \ref{labelPiaction}, $CF(L_1,L_0;\ell_{01})$ carries a natural
$\Lambda(L_0,L_1;\ell_{01})$-module structure and $CF^k(L_1,L_0;\lambda_{01})$
a $\Lambda^{(0)}(L_0,L_1;\ell_{01})$-module structure where
$$
\Lambda^{(0)}(L_0,L_1;\ell_{01}) = \left\{ \sum a_g [g] \in
\Lambda(L_0,L_1;\ell_{01}) \Big| \mu([g]) = 0 \right\}.
$$
We define
\begin{equation}\label{CFpair}
C(L_1,L_0;\ell_{01})
= CF(L_1,L_0;\ell_{01}) \otimes_{\Lambda(L_0,L_1;\ell_{01})}
\Lambda_{nov}
\end{equation}
where we use the embedding (\ref{maptouniN}).
\par
We write the $\Lambda_{nov}$ module (\ref{CFpair}) also as
$$
C((L_1,\gamma_1),(L_0,\gamma_0);\Lambda_{nov}).
$$
\begin{defn}\label{efilt}
We define the {\it energy filtration}
$
F^{\lambda}CF((L_1,\gamma_1),(L_0,\gamma_0))
$
of the Floer chain complex
$CF(L_1,\gamma_1),(L_0,\gamma_0))$ (here $\lambda \in \R$)
such that $[p,w]$ is in $F^{\lambda}CF((L_1,\gamma_1),(L_0,\gamma_0))$
if and only if $\mathcal A([p,w]) \ge \lambda$.
\end{defn}
This filtration also induces a filtration on (\ref{CFpair}).
\begin{rem}
We remark that this filtration depends (not only of the homotopy class of) but also of $\gamma_i$ itself.
\end{rem}
It is easy to see the following from the definition of $\partial$ above:
\begin{lem}\label{filtpres}
$$
\partial \left(F^{\lambda}CF((L_1,\gamma_1),(L_0,\gamma_0))
\subseteq F^{\lambda}CF((L_1,\gamma_1),(L_0,\gamma_0))\right).
$$
\end{lem}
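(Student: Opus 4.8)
The plan is to reduce Lemma \ref{filtpres} to one inequality for each term that actually occurs in $\partial$. Since $\partial$ is $R$-linear and, by Definition \ref{efilt}, $F^{\lambda}CF((L_1,\gamma_1),(L_0,\gamma_0))$ is the $R$-span of those generators $[p,w]$ with $\CA([p,w]) \ge \lambda$, it is enough to show: if $\CA([p,w]) \ge \lambda$, then every generator $[q,w\#B]$ appearing in $\partial([p,w])$ --- that is, with $q \in L_0\cap L_1$, $B \in \pi_2(p,q)$, and $n(p,q;B) \ne 0$ --- also satisfies $\CA([q,w\#B]) \ge \lambda$.

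First I would record the behaviour of $\CA$ under the concatenation $\#$. Representing $B \in \pi_2(p,q)$ by a map glued to $w$ along the edges on which both are the constant path $p$, the closedness of $\omega$ gives
\[
\CA([q,w\# B]) = \int (w\#B)^{*}\omega = \int w^{*}\omega + \omega(B) = \CA([p,w]) + \omega(B),
\]
where $\omega(B)$ denotes the symplectic area of the relative homotopy class $B$, which is well defined since $\omega$ is closed. In particular $w\#B$ represents a class in $\pi_2(\ell_{01};q)$, so $q$ is an admissible intersection point and $[q,w\#B]$ is a bona fide generator of $CF((L_1,\gamma_1),(L_0,\gamma_0))$. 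Thus the lemma will follow once we know that $\omega(B) \ge 0$ whenever $B$ contributes to $\partial$.

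That last point is the positivity of energy for pseudo-holomorphic polygons. If $n(p,q;B) \ne 0$ then $\CM(p,q;B) \ne \emptyset$, because the virtual count $n(p,q;B) = \#(\CM(p,q;B))$ vanishes when the moduli space is empty. Choosing a stable map $u \in \CM(p,q;B)$, every component of $u$ is $J$-holomorphic, hence has non-negative energy equal to its symplectic area; summing over components and using $[u] = B$ gives $\omega(B) = \int u^{*}\omega \ge 0$. Combined with the identity above and the hypothesis, $\CA([q,w\#B]) = \CA([p,w]) + \omega(B) \ge \CA([p,w]) \ge \lambda$, so $[q,w\#B] \in F^{\lambda}CF$ and therefore $\partial([p,w]) \in F^{\lambda}CF$. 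The same reasoning applies to the $\Lambda_{nov}$-coefficient complex $C(L_1,L_0;\ell_{01})$ of \eqref{CFpair}, since the map \eqref{maptouniN} is filtration preserving.

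The main --- and essentially only --- obstacle is this energy positivity: a nonzero count forces an honest $J$-holomorphic polygon in the class $B$, and such polygons have non-negative area. Everything else is the bookkeeping identity $\CA([q,w\#B]) = \CA([p,w]) + \omega(B)$. I expect no real difficulty here; it is the same mechanism that makes the Novikov-ring formalism consistent, since under \eqref{maptouniN} the generator $[q,w\#B]$ carries the factor $T^{\omega(B)}$ with $\omega(B) \ge 0$.
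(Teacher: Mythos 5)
Your argument is correct and is exactly the one the paper has in mind: the paper omits the proof as immediate from the definition of $\partial$, and the implicit reasoning is precisely your combination of the identity $\CA([q,w\# B])=\CA([p,w])+\omega(B)$ with the non-negativity of $\omega(B)$ forced by the existence of a $J$-holomorphic (stable) representative whenever $n(p,q;B)\ne 0$. Nothing is missing.
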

\section{Obstruction and $A_\infty$ structure}
\label{subsec:obstruction}
Let $(L_0,L_1)$ be a relatively spin pair with $L_0$
intersecting $L_1$ transversely and fix a $(M,st)$-relatively spin
structure for the pair $(L_0,L_1)$.
\par
According to the definition
(\ref{eq:boundary}) of the map $\partial$, we have the formula for
its matrix coefficients
\begin{equation}\label{eq:B=B1B2}
\langle \partial\partial [p,w], [r,w\# B] \rangle = \sum_{q \in L_0\cap L_1}
\sum_{B = B_1\# B_2 \in \pi_2(p,r)} n(p,q;B_1)n(q,r;B_2)T^{\omega(B)}
\end{equation}
where $B_1 \in \pi_2(p,q)$ and $B_2 \in \pi_2(q,r)$.

To prove, $\partial \partial = 0$, one needs to prove
$\langle \partial\partial [p,w], [r,w\# B]\rangle = 0$
for all pairs $[p,w], \, [r,w \# B]$.
On the other hand it follows from definition that each summand
$$
n(p,q;B_1)n(q,r;B_2)T^{\omega(B)} = n(p,q;B_1)T^{\omega(B_1)}n(q,r;B_2)T^{\omega(B_2)}
$$
and the coefficient $n(p,q;B_1)n(q,r;B_2)$
is nothing but the number of broken trajectories lying in
$\CM(p,q;B_1) \# \CM(q,r;B_2)$.
This number is nonzero in the general
situation we work with.

To handle the problem of obstruction to $\del\circ \del = 0$ and of
bubbling-off discs in general, a structure of filtered $A_\infty$ algebra $(C, \mathfrak
m)$ \emph{with non-zero $\mathfrak m_0$-term} is associated to
each Lagrangian submanifold $L$ \cite{fooo00,fooo06}.
\subsection{$A_{\infty}$ algebra}
\label{subsec:objects}
In this subsection, we review the notion and construction of
filtered $A_{\infty}$ algebra associated to a Lagrangian
submanifold.
In order to make the construction consistent to one
in the last section, where $\Lambda(L_0,L_1;\ell_{01})$
is used for the coefficient ring rather than the universal
Novikov ring, we rewrite them
using smaller Novikov ring $\Lambda(L)$ which we define below.
Let $L$ be a relatively spin Lagrangian submanifold.
We have a homomorphism
$$
(E,\mu) : H_2(M,L;\Z) \to \R \times \Z
$$
where
$
E(\beta) = \beta \cap [\omega]
$
and $\mu$ is the Maslov index homomorphism.
We put $g\sim g'$ for $g,g' \in H_2(M,L;:\Z)$ if
$E(g) = E(g')$ and $\mu(g) = \mu(g')$.
We write $\Pi(L)$ the quotient with respect to this equivalence
relation. It is a subgroup of $\R\times \Z$.
We define
\beastar
\Lambda(L)
= \Big\{ \sum c_g[g] & \Big|& g \in \Pi(L), c_g \in R,
E(g) \ge 0, \\
&{}& \forall \, E_0 \, \# \{g \mid c_g \ne 0, E(g) \le E_0\} < \infty \Big \}
\eeastar
There exists an embedding
$
\Lambda(L) \to \Lambda_{0,nov}
$,
defined by $[g] \mapsto e^{\mu(g)/2}T^{E(g)}$.
\par
Let $\overline C$ be a graded $R$-module and $CF = \overline C
\widehat{\otimes}_R \Lambda(L)$. Here and hereafter we use symbol $CF$ for the modules over
$\Lambda(L)$ or $\Lambda(L_0,L_1)$ and
$C$ for the modules over the universal Novikov ring.
\par
We denote by $CF[1]$
its suspension defined by $CF[1]^k = CF^{k+1}$.
We denote by $\deg(x)=|x|$ the degree of $x \in C$ before the shift and
$\deg'(x)=|x|'$ that after the degree shifting, i.e., $|x|' = |x| - 1$.
Define the {\it bar complex} $B(CF[1])$ by
$$
B_k(CF[1]) = (CF[1])^{k\otimes}, \quad B(CF[1]) =
\bigoplus_{k=0}^\infty B_k(CF[1]).
$$
Here $B_0(CF[1]) = R$ by definition.
The tensor product is taken over $\Lambda(L)$.
We provide the degree of
elements of $B(CF[1])$ by the rule
\begin{equation}\label{eq:degonBC[1]}
|x_1 \otimes \cdots \otimes x_k|': = \sum_{i=1}^k |x_i|' = \sum_{i =1}^k|x_i|
-k
\end{equation}
where $|\cdot|'$ is the shifted degree. The ring $B(CF[1])$
has the structure of {\it graded coalgebra}.
\begin{defn} The structure of {\it strict filtered $A_\infty$ algebra}
over $\Lambda(L)$ is a
sequence of $\Lambda(L)$ module homomorphisms
$$
\mathfrak m_k: B_k(CF[1]) \to CF[1], \quad k = 1, 2, \cdots,
$$
of degree +1 such that the coderivation
$d = \sum_{k=1}^\infty \widehat{\mathfrak m}_k$
satisfies $d d= 0$, which
is called the \emph{$A_\infty$-relation}.
Here we denote by $\widehat{\mathfrak
m}_k: B(CF[1]) \to B(CF[1])$ the unique extension of $\mathfrak m_k$
as a coderivation on $B(CF[1])$. A \emph{filtered $A_\infty$ algebra}
is an $A_\infty$ algebra with a filtration for which $\mathfrak m_k$ are
continuous with respect to the induce non-Archimedean topology.
\end{defn}
In particular, we have $\mathfrak m_1
\mathfrak m_1 = 0$ and so it defines a complex $(CF,\mathfrak m_1)$. We
define the $\mathfrak m_1$-cohomology by
\begin{equation}\label{eq:m1cohom}
H(CF,\mathfrak m_1) = \mbox{Ker}\,\mathfrak m_1/\mbox{Im}\,\mathfrak m_1.
\end{equation}
A filtered {\it $A_\infty$ algebra} is defined in the same way, except
that it also includes
$$
\mathfrak m_0: R \to B(CF[1]).
$$
The first two terms of the $A_\infty$ relation for a
$A_\infty$ algebra are given as
\begin{eqnarray}
\mathfrak m_1(\mathfrak m_0(1)) & = & 0 \label{eq:m1m0=0} \\
\mathfrak m_1\mathfrak m_1 (x) + (-1)^{|x|'}\mathfrak m_2(x, \mathfrak m_0(1)) +
\mathfrak m_2(\mathfrak m_0(1), x) & = & 0. \label{eq:m0m1}
\end{eqnarray}
In particular, for the case $\frak m_0(1)$ is nonzero, $\mathfrak
m_1$ will not necessarily satisfy the boundary property, i.e., $\mathfrak m_1\mathfrak m_1
\neq 0$ in general.
\begin{rem}
Here we use the Novikov ring $\Lambda(L)$. In \cite{fooo06} we defined a filtered $A_{\infty}$
algebra over the universal Novikov ring $\Lambda_{0,nov}$.
A filtered $A_{\infty}$ algebra over $\Lambda(L)$ induces one over
$\Lambda_{0,nov}$ in an obvious way.
On the other hand, an appropriate gap condition is needed for
a filtered $A_{\infty}$ algebra over $\Lambda_{0,nov}$ to
induce one over $\Lambda(L)$.
\end{rem}

\par
We now describe the $A_\infty$ operators $\mathfrak m_k$
in the context of $A_\infty$ algebra of Lagrangian submanifolds.
For a given compatible almost complex structure $J$, consider the moduli
space of stable maps of genus zero
$$
\CM_{k+1}(\beta;L) =\{ (w, (z_0,z_1, \cdots,z_k)) \mid
\overline \partial w = 0, \, z_i \in \partial D^2, \, [w] = \beta
\, \mbox{in }\, \pi_2(M,L) \}/\sim
$$
where $\sim$ is the conformal reparameterization of the disc $D^2$.
We require that $z_0, \cdots, z_k$ respects counter clockwise cyclic order of
$S^1$.
(We wrote this moduli space $\CM^{\text{\rm main}}_{k+1}(\beta;L)$
in \cite{fooo08}. The symbol `main' indicates the compatibility
of $z_0, \cdots, z_k$, with counter clockwise cyclic order. We omit this
symbol in this paper since we always assume it.)
\par
$\CM_{k+1}(\beta;L)$ has a Kuranishi structure and its
dimension is given by
\begin{equation}\label{eq:dim}
n+ \mu(\beta) - 3 + (k+1) = n+\mu(\beta) + k-2.
\end{equation}
Now let
$
[P_1,f_1], \cdots,[P_k,f_k] \in C_*(L;\Q)
$
be $k$ smooth singular simplices of $L$.
(Here we denote by $C(L;\Q)$
a \emph{suitably chosen countably generated} cochain
complex of smooth singular chains of $L$.)
We put the cohomological grading
$\mbox{deg} P_i = n - \dim P_i$ and consider the fiber product
$$
ev_0: \CM_{k+1}(\beta;L) \times_{(ev_1, \cdots, ev_k)}(P_1 \times
\cdots \times P_k) \to L.
$$
A simple calculation shows that the expected dimension of this chain is given by
$
n + \mu(\beta) - 2 + \sum_{j=1}^k(\dim P_j + 1- n)
$
or equivalently we have the degree
$$
\mbox{deg}\left[\CM_{k+1}(\beta;L) \times_{(ev_1, \cdots, ev_k)}(P_1
\times \cdots\times P_k),
ev_0\right] = \sum_{j=1}^n(\mbox{deg} P_j -1) + 2- \mu(\beta).
$$
For each given $\beta \in \pi_2(M,L)$ and $k = 0, \cdots$, we
define $\frak m_{1,0}(P) = \pm \partial P$ and
\be\label{eq:mkbeta}
\aligned
\mathfrak m_{k,\beta}(P_1, \cdots, P_k)
&= \left[\CM_{k+1}(\beta;L) \times_{(ev_1, \cdots, ev_k)}(P_1 \times \cdots \times P_k),
ev_0\right] \\
& \in C(L;\Q)
\endaligned
\ee
(More precisely we regard the right hand side of (\ref{eq:mkbeta})
as a smooth singular chain by taking appropriate multi-valued
perturbation (multisection) and choosing a simplicial decomposition of its zero set.)
\par
We put
$$
CF(L) = C(L;\Q) \,\,\widehat{\otimes}_{\Q}\,\, \Lambda(L).
$$
We define
$
\mathfrak m_k : B_kCF(L)[1] \to B_kCF[1]
$
by
$$
\mathfrak m_k= \sum_{\beta \in \pi_2(M,L)} \mathfrak m_{k,\beta} \otimes [\beta].
$$
\par
Then it follows that the map
$
\mathfrak m_k : B_kCF(L)[1] \to CF(L)[1]
$
is well-defined, has degree 1 and continuous with respect to non-Archimedean topology.
We extend $\mathfrak m_k$ as a coderivation
$\widehat{\mathfrak m}_k: BCF[1] \to BCF[1]$
where $BCF(L)[1]$ is the completion of the direct sum $\oplus_{k=0}^\infty
B_kCF(L)[1]$ where $B_kCF(L)[1]$ itself is the completion of $CF(L)[1]^{\otimes k}$.
$BCF(L)[1]$ has a natural filtration defined similarly as Definition \ref{efilt}.
Finally we take the sum
$$
\widehat d = \sum_{k=0}^\infty \widehat{\frak m}_k : BCF(L)[1] \to BCF(L)[1].
$$
We then have the following coboundary property:
\begin{thm}\label{algebra} Let $L$ be an arbitrary compact relatively
spin Lagrangian submanifold of an arbitrary tame symplectic manifold
$(M,\omega)$. The coderivation $\widehat d$ is a continuous map that
satisfies the $A_\infty$ relation $\widehat d \widehat d = 0$, and so
$(CF(L),\mathfrak m)$ is a filtered $A_\infty$ algebra over $\Lambda(L)$.
\end{thm}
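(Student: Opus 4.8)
\emph{Proof proposal.} The plan is to deduce $\widehat d\widehat d=0$ from the structure of the codimension-one boundary of the moduli spaces $\CM_{k+1}(\beta;L)$, following \cite{fooo00, fooo06}; the assertion is essentially theirs, rephrased over the smaller ring $\Lambda(L)$, so the work is in assembling those results, and I indicate the architecture and the single point at which the present formulation differs. First I would reduce $\widehat d\widehat d=0$ to the usual family of quadratic relations among the $\mathfrak m_{k,\beta}$. By the graded-coalgebra formalism on $B(CF(L)[1])$, a coderivation $\widehat d=\sum_{k\ge 0}\widehat{\mathfrak m}_k$ satisfies $\widehat d\widehat d=0$ precisely when, for every $k$, every $\beta\in\pi_2(M,L)$ and all $x_j\in CF(L)[1]$,
$$
\sum_{\substack{k_1+k_2=k+1\\ \beta_1+\beta_2=\beta}}\ \sum_{i=0}^{k_1-1}
(-1)^{\epsilon_i}\,\mathfrak m_{k_1,\beta_1}\bigl(x_1,\dots,x_i,\mathfrak m_{k_2,\beta_2}(x_{i+1},\dots,x_{i+k_2}),x_{i+k_2+1},\dots,x_k\bigr)=0,
$$
where $\epsilon_i=|x_1|'+\cdots+|x_i|'$ is the Koszul sign dictated by \eqref{eq:degonBC[1]}; the sums range over $\beta_1,\beta_2$ that may be $0$, and thereby include the terms containing $\mathfrak m_0$ and $\mathfrak m_{1,0}$. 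Since $\mathfrak m_k=\sum_\beta \mathfrak m_{k,\beta}\otimes[\beta]$, it suffices to verify this for each fixed $\beta$ with inputs $x_j=[P_j,f_j]$ smooth singular simplices of $L$.

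The geometric input is the structure of the codimension-one boundary of the moduli spaces. For fixed $\beta,k$ and generic $P_1,\dots,P_k$, the chain $\CM_{k+1}(\beta;L)\times_{(ev_1,\dots,ev_k)}(P_1\times\cdots\times P_k)$ carries a Kuranishi structure of the dimension computed from \eqref{eq:dim}; when that dimension is $1$, Gromov--Floer compactness identifies its codimension-one boundary with the union, over splittings $\beta=\beta_1+\beta_2$ and over all distributions of the boundary marked points and the chains $P_j$ between a principal disc and a disc bubbled off along $L$, of fiber products of two such moduli spaces glued at $ev_0$ of one factor and some $ev_i$ of the other. Pushing forward by $ev_0$ on each piece, this is exactly the geometric shadow of the displayed quadratic relation. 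Gromov compactness also furnishes the well-definedness and non-Archimedean continuity of the $\mathfrak m_k$ and of $\widehat d$: for each $E_0$ only finitely many $\beta$ with $\omega(\beta)\le E_0$ have $\CM_{k+1}(\beta;L)\ne\emptyset$, so each coefficient of $T^\lambda$ in $\Lambda(L)$ receives only finitely many contributions.

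The technically decisive step, and the place I expect to be the main obstacle, is to promote this set-theoretic boundary identification to an identity of $\Q$-valued virtual fundamental chains. This demands choosing, simultaneously on all the $\CM_{k+1}(\beta;L)$, a system of multisections perturbing the Kuranishi structures that is compatible with the boundary decomposition --- on each boundary stratum it restricts to the fiber product of the multisections already fixed on the two factors --- and with the forgetful and gluing maps, while at the same time fixing a countably generated subcomplex $C(L;\Q)$ of smooth singular chains transverse to all evaluation maps in play and closed under the relevant fiber products. One constructs this by induction on the energy $\omega(\beta)$ and, at fixed energy, on $k$, extending at each step a multisection already determined on the boundary to the interior; I would quote this construction verbatim from \cite[Chapter 7]{fooo06}. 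Nothing here changes in the anchored context, since the moduli spaces attached to the single Lagrangian $L$ are literally those of \cite{fooo06}; the only modification made in the present paper is the indexing of coefficients.

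Finally, the orientations of $\CM_{k+1}(\beta;L)$ and of the fiber products are those determined by the chosen relative spin structure as in \cite[Chapter 8]{fooo06} (compare Theorem \ref{thm:fooo-ori2}), and under the boundary identification the induced boundary orientation differs from the product orientation precisely by the sign $(-1)^{\epsilon_i}$ above; this sign bookkeeping I would cite rather than reproduce. Combining the three ingredients gives the quadratic relations, hence $\widehat d\widehat d=0$. The degree assertion is immediate from \eqref{eq:dim} and \eqref{eq:mkbeta}: each $\mathfrak m_{k,\beta}$, hence each $\mathfrak m_k$, has degree $+1$, while $\mathfrak m_0(1)=\sum_{\beta\ne 0}[\CM_1(\beta;L)]\otimes[\beta]$. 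Thus $(CF(L),\mathfrak m)$ is a filtered $A_\infty$ algebra over $\Lambda(L)$. Replacing the $\Lambda_{0,nov}$-coefficients of \cite{fooo06} by $\Lambda(L)$ is harmless: the structure constants are naturally indexed by $\Pi(L)$, and $[g]\mapsto e^{\mu(g)/2}T^{E(g)}$ is a filtered ring embedding $\Lambda(L)\hookrightarrow\Lambda_{0,nov}$ compatible with all operations, so the two versions of $\mathfrak m$ correspond under it.
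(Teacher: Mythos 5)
Your proposal is correct and follows essentially the same route as the paper, which simply invokes Theorem A of \cite{fooo06} (codimension-one boundary decomposition of $\CM_{k+1}(\beta;L)$, inductively compatible multisections, and relative-spin orientations) and notes that the construction is unchanged in the present setting. Your closing observation --- that the structure constants are indexed by $\Pi(L)$ so the operations are defined directly over $\Lambda(L)$, with the embedding $[g]\mapsto e^{\mu(g)/2}T^{E(g)}$ into $\Lambda_{0,nov}$ intertwining them --- is exactly the point the paper makes in the remark following the theorem, and it is the right way to get the finer $\Lambda(L)$-statement rather than trying to descend from $\Lambda_{0,nov}$.
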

We put
$$
C(L;\Lambda_{0,nov}) = CF(L) \,\widehat{\otimes}_{\Lambda(L)} \,\, \Lambda_{0,nov}
$$
on which a filtered $A_{\infty}$ structure on $C(L;\Lambda_{0,nov})$
(over the ring $\Lambda_{0,nov}$) is induced.
This is the filtered $A_{\infty}$ structure given in Theorem A \cite{fooo06}.
The proof is the same as that of Theorem A \cite{fooo06}.
\par
In the presence of $\mathfrak m_0$,
$\widehat{\mathfrak m}_1 \widehat{\mathfrak m}_1 = 0$ no longer holds
in general. This leads to consider deforming Floer's original definition by a bounding
cochain of the obstruction cycle arising from bubbling-off discs.
One can always deform the given (filtered) $A_\infty$ algebra $(CF(L),\mathfrak m)$ by
an element $b \in CF(L)[1]^0$ by re-defining the $A_\infty$ operators as
$$
\mathfrak m_k^b(x_1,\cdots, x_k) = \mathfrak m(e^b,x_1, e^b,x_2, e^b,x_3, \cdots,
x_k,e^b)
$$
and taking the sum $\widehat d^b = \sum_{k=0}^\infty \widehat{\mathfrak m}_k^b$.
This defines a new filtered $A_\infty$ algebra in general. Here we
simplify notations by writing
$$
e^b = 1 + b + b\otimes b + \cdots + b \otimes \cdots \otimes b +\cdots.
$$
Note that each summand in this infinite sum has degree 0 in $CF(L)[1]$ and
converges in the non-Archimedean topology if $b$ has positive
valuation, i.e., $v(b) > 0$.
(See Section \ref{subsec:novikov} for the definition of $v$.)
\begin{prop} For the $A_\infty$ algebra $(CF(L),\mathfrak m_k^b)$,
$\mathfrak m_0^b = 0$ if and only if $b$ satisfies
\begin{equation}\label{eq:MC}
\sum_{k=0}^\infty\mathfrak m_k(b,\cdots, b) = 0.
\end{equation}
This equation is a version of \emph{Maurer-Cartan equation}
for the filtered $A_\infty$ algebra.
\end{prop}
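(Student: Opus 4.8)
The plan is to unwind the definitions of the deformed operations $\mathfrak m_k^b$ and of $e^b$; no geometric input beyond what has already been set up is needed. Recall that for $k=0$ the deformed operation $\mathfrak m_0^b \colon R \to CF(L)[1]$ is, by the formula for $\mathfrak m_k^b$ applied with no inputs, given on the generator $1 \in R$ by $\mathfrak m_0^b(1) = \mathfrak m(e^b)$, where $\mathfrak m = \sum_{j=0}^\infty \mathfrak m_j$ and each $\mathfrak m_j \colon B_j CF(L)[1] \to CF(L)[1]$ only sees the word-length-$j$ component of its argument.

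First I would identify the homogeneous components of $e^b = 1 + b + b\otimes b + \cdots$ with respect to word length: the length-$j$ component is exactly $\underbrace{b \otimes \cdots \otimes b}_{j}$. Hence $\mathfrak m_j(e^b) = \mathfrak m_j(b,\ldots,b)$ for every $j \geq 0$, where for $j=0$ this means the original obstruction term $\mathfrak m_0(1)$. Summing over $j$ gives
\[
\mathfrak m_0^b(1) = \sum_{j=0}^\infty \mathfrak m_j(b,\ldots,b),
\]
and since an $R$-module homomorphism out of $R$ vanishes identically if and only if it kills $1$, this identity is precisely the asserted equivalence between $\mathfrak m_0^b = 0$ and the displayed equation \eqref{eq:MC}.

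The one point that deserves a word of care is that the right-hand side of the displayed identity is a well-defined element of $CF(L)[1]$: because $v(b) > 0$, the word $b^{\otimes j}$ has valuation $j\,v(b) \to \infty$, so the series converges in the non-Archimedean topology, using also the continuity of the $\mathfrak m_j$ and the finiteness condition built into $\Lambda(L)$. This is exactly what was needed, in the paragraph preceding the statement, for $\widehat d^b = \sum_{k=0}^\infty \widehat{\mathfrak m}_k^b$ to be a genuine filtered $A_\infty$ structure, so that $\mathfrak m_0^b$ is defined in the first place. Thus there is essentially no obstacle here beyond this bookkeeping; calling \eqref{eq:MC} the Maurer--Cartan equation is then a matter of convention, since it is the (curved) $A_\infty$ version of the master equation of deformation theory, with the operations $\mathfrak m_j$ in the role of the successive brackets.
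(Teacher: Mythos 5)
Your argument is correct and is essentially the same as the paper's (the proposition is immediate from the definition $\mathfrak m_k^b(x_1,\cdots,x_k)=\mathfrak m(e^b,x_1,\cdots,x_k,e^b)$, which at $k=0$ reads $\mathfrak m_0^b(1)=\mathfrak m(e^b)=\sum_j \mathfrak m_j(b,\cdots,b)$). Your remark on convergence via $v(b)>0$ and continuity of the $\mathfrak m_j$ is exactly the point the paper handles by requiring bounding cochains to have positive valuation, so nothing is missing.
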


\begin{defn}\label{boundchain}
Let $(CF(L),\mathfrak m)$ be a filtered $A_\infty$ algebra in general and
$BCF(L)[1]$ be its bar complex. An element $b \in CF(L)[1]^0 = CF(L)^1$ is called
a \emph{bounding cochain} if it satisfies the equation (\ref{eq:MC})
and $v(b) > 0$. We denote by $\widetilde \CM(L;\Lambda(L))$ the set of bounding
cochains.
\end{defn}

In general a given $A_\infty$ algebra may or may not have a solution
to (\ref{eq:MC}). In our case we define:

\begin{defn}\label{unobstructed}
A filtered $A_\infty$ algebra $(CF(L),\mathfrak m)$ is called \emph{unobstructed over}
$\Lambda(L)$ if the equation
(\ref{eq:MC}) has a solution $b \in CF(L)[1]^0 = CF(L)^1$ with $v(b) > 0$.
\end{defn}
One can define the notion of homotopy equivalence between two
bounding cochains and et al as described in Chapter 4 \cite{fooo06}. We denote
by $ \CM(L;\Lambda(L))$ the set of equivalence classes of bounding cochains of
$L$.
\begin{rem}
In Definition \ref{boundchain} above we consider
bounding cochain contained in
$CF(L) \subset C(L;\Lambda_0)$ only.
This is the reason why we write $\widetilde \CM(L;\Lambda(L))$
in place of $\widetilde \CM(L)$. (The latter is used in \cite{fooo06}.)
\end{rem}
\subsection{$A_{\infty}$ bimodule}
\label{subsec:morphisms}
Suppose we are in Situation \ref{pairdeta}. Once the $A_\infty$
algebra is attached to each Lagrangian submanifold $L$, we then
construct a structure of filtered \emph{$A_\infty$ bimodule} on the
module $CF((L_1,\gamma_1), (L_0,\gamma_0))$, which was introduced in
Section \ref{chaincomplex} as follows. This filtered $A_\infty$
bimodule structure is by definition is a family of operators
$$
\aligned
\mathfrak n_{k_1,k_0}: B_{k_1}(CF(L_1)[1])\,\,
\widehat{\otimes}_{\Lambda(L_1)} \,\, CF((L_1,\gamma_1),
(L_0,\gamma_0))
\,\,&\widehat{\otimes}_{\Lambda(L_0)} \,\, B_{k_0}(CF(L')[1]) \\
&\to CF((L_1,\gamma_1),
(L_0,\gamma_0))
\endaligned$$
for $k_0,k_1\ge 0$.
Here the left hand side is defined as follows:
It is easy to see that there are embeddings
$\Lambda(L_0) \to \Lambda(L_0,L_1;\ell_{01})$,
$\Lambda(L_1) \to \Lambda(L_0,L_1;\ell_{01})$.
Therefore a $ \Lambda(L_0,L_1;\ell_{01})$ module
$CF((L_1,\gamma_1),
(L_0,\gamma_0))$ can be regarded both as
$\Lambda(L_0)$ module and $\Lambda(L_1)$ module.
Hence we can take tensor product in the left hand side.
($\widehat{\otimes}_{\Lambda(L_i)}$ is the
completion of this algebraic tensor product.)
The left hand side then becomes a $ \Lambda(L_0,L_1;\ell_{01})$ module,
since the rings involved are all commutative.
\par
We briefly describe the definition of $\mathfrak n_{k_1,k_0}$.
A typical element of the tensor product
$$
B_{k_1}(CF(L_1)[1]) \widehat{\otimes}_{\Lambda(L_1)} \,\, CF((L_1,\gamma_1),
(L_0,\gamma_0))
\,\,\widehat{\otimes}_{\Lambda(L_0)} \,\, B_{k_0}(CF(L_0)[1])
$$
has the form
$$
P_{1,1} \otimes \cdots, \otimes P_{1,k_1} \otimes [p,w] \otimes
P_{0,1} \otimes \cdots \otimes P_{0,k_0}
$$
with $p \in L_0 \cap L_1$ being an admissible intersection point. Then the image $\mathfrak n_{k_0,k_1}$ thereof is given by
$$
\sum_{q, B}T^{\omega(B)}e^{\mu(B)/2}\# \left(\CM(p,q;B;P_{1,1},\cdots,P_{1,k_1};
P_{0,0},\cdots,P_{0,k_0})\right) [q,B\#w].
$$
Here $B$ denotes homotopy class of Floer trajectories connecting $p$ and $q$,
the summation is taken over all $[q,B]$ with
$$
\dim \CM(p,q;B;P_{1,1},\cdots,P_{1,k_1};
P_{0,1},\cdots,P_{0,k_0}) = 0,
$$
and $\# \left(\CM(p,q;B;P_{1,1},\cdots,P_{1,k_1};P_{0,1},\cdots,P_{0,k_0})\right)$ is
the `number' of elements in
the `zero' dimensional moduli space $\CM(p,q;B;P_{1,1},\cdots,P_{1,k_1};
P_{0,1},\cdots,P_{0,k_0})$. Here the moduli space $\CM(p,q;B;P_{1,1},\cdots,P_{1,k_1};
P_{0,1},\cdots,P_{0,k_0})$ is the Floer moduli space
$
\CM(p,q;B)
$
cut-down by intersecting with the given chains $P_{1,i} \subset L_1$
and $P_{0,j} \subset L_0$. (See Section 3.7 \cite{fooo08}.)
An orientation on this moduli space is induced by $o_{[p,w]}$, $o_{[q,w']}$,
which we obtained by Theorem \ref{thm:fooo-ori}.
\begin{thm}\label{thm;bimodule}
Let $((L_0,\gamma_0),
(L_1,\gamma_1))$ be a pair of anchored
Lagrangian submanifolds.
Then the family $\{\mathfrak n_{k_1,k_0}\}$ defines
a left $(CF(L_1),\mathfrak m)$ and right $(CF(L_0),\mathfrak m)$ filtered
$A_\infty$-bimodule structure on $CF((L_1,\gamma_1),
(L_0,\gamma_0))$.
\end{thm}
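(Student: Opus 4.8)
The proof follows the same scheme as the construction of the filtered $A_\infty$ algebra (Theorem \ref{algebra}) and of the $A_\infty$ bimodule in Section 3.7 of \cite{fooo08}; the only genuinely new points are the compatibility with the finer coefficient ring $\Lambda(L_0,L_1;\ell_{01})$ and the bookkeeping of the lifts $[p,w]\in\pi_2(\ell_{01};p)$, so I would organize the argument in three stages: well-definedness of the $\mathfrak n_{k_1,k_0}$, the bimodule relation, and the filtration property.

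\emph{Well-definedness.} The cut-down moduli spaces $\CM(p,q;B;P_{1,1},\dots,P_{1,k_1};P_{0,1},\dots,P_{0,k_0})$ carry Kuranishi structures with corners, and a compatible system of multisections produces the rational counts in the zero-dimensional case, exactly as in \cite{fooo06,fooo08}; none of this is affected by the anchoring. The degree of $\mathfrak n_{k_1,k_0}$ is read off from the dimension formula of \cite{fooo08} together with (\ref{eq:dim}) and Proposition \ref{prop:delbarindex}: passing from $[p,w]$ to $[q,B\#w]$ changes the Maslov-Morse grading $\mu([\cdot];\lambda_{01})$ by the index of the Floer strip, and the weights $T^{\omega(B)}e^{\mu(B)/2}$ absorb the rest, so $\mathfrak n_{k_1,k_0}$ has the degree required of an $A_\infty$ bimodule operation. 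Convergence of the sum over $(q,B)$ in the non-Archimedean topology is Gromov compactness together with $\omega(B)\ge0$ on $J$-holomorphic strips. That the target label $[q,B\#w]$ is again an admissible element of $\pi_2(\ell_{01};q)$ is the gluing (\ref{eq:pi2pell0}): concatenating $w$ with the class of the strip lands in $\pi_2(\ell_{01};q)$, and since this class is nonempty $q$ is admissible in the sense of Section \ref{sec:gluinghomotopy}, so $[q,B\#w]$ is indeed a basis element of $CF((L_1,\gamma_1),(L_0,\gamma_0))$. Finally $\Lambda(L_i)$-linearity is immediate from the definition of the module actions through the embeddings $\Lambda(L_i)\hookrightarrow\Lambda(L_0,L_1;\ell_{01})$ and the additivity of $E=\omega$ and of $\mu$ under gluing.

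\emph{The bimodule relation.} As in Theorem \ref{algebra} the relation is encoded by $\widehat d\,\widehat d=0$, where $\widehat d$ is the coderivation on $B(CF(L_1)[1])\,\widehat{\otimes}_{\Lambda(L_1)}\,CF((L_1,\gamma_1),(L_0,\gamma_0))\,\widehat{\otimes}_{\Lambda(L_0)}\,B(CF(L_0)[1])$ assembled from $\widehat{\mathfrak m}^{L_1}$, $\widehat{\mathfrak m}^{L_0}$, and the coderivation-type extension of the $\mathfrak n_{k_1,k_0}$. To prove this I would examine the one-dimensional components of $\CM(p,r;B;\dots)$ and describe their codimension-one boundary via the stable-map compactification: there are exactly three kinds of strata, namely (i) a Floer strip splitting at an intermediate intersection point $q$ into $\CM(p,q;B_1;\cdots)\#\CM(q,r;B_2;\cdots)$ with $B=B_1\#B_2$ --- these, together with the $\partial P$-terms coming from $\mathfrak m_{1,0}$, give the $\widehat{\mathfrak n}\circ\widehat{\mathfrak n}$ part; (ii) a disc bubbling off along the $L_1$-boundary, which gives the terms $\mathfrak n(\dots,\mathfrak m^{L_1}(\dots),\dots;z;\dots)$; and (iii) the symmetric $L_0$-bubbling, giving $\mathfrak n(\dots;z;\dots,\mathfrak m^{L_0}(\dots),\dots)$. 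Summing the signed boundary points of each compact $1$-manifold to zero, with the coherent orientations of Theorems \ref{thm:fooo-ori} and \ref{thm:fooo-ori2} (and the sign $(-1)^\epsilon$ of Remark \ref{connorbori}), yields $\widehat d\,\widehat d=0$; the $T$- and $e$-weights take care of themselves because $\omega$ and $\mu$ are additive under concatenation, which is precisely the abstract-index gluing rule of Definition \ref{abstractindex}.

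\emph{Filtration and the main difficulty.} Continuity of $\mathfrak n_{k_1,k_0}$ for the energy filtrations of Definition \ref{efilt} --- the ``filtered'' part of the statement --- follows from the identity $\CA([q,B\#w]) = \CA([p,w]) + \omega(B)$ with $\omega(B)\ge0$, so $\mathfrak n_{k_1,k_0}$ sends $F^\lambda$ into a completed sum of $F^\lambda$'s, exactly the argument of Lemma \ref{filtpres} applied to the polygonal moduli spaces. I expect the real difficulty to be not any single geometric input --- all of it is borrowed from \cite{fooo06,fooo08} --- but rather reconciling the $A_\infty$-bimodule sign conventions, with the shifted degrees $|\cdot|'$, against the boundary-orientation signs produced by Theorem \ref{thm:fooo-ori2} and Remark \ref{connorbori}; this is the very place where the description in \cite{Fuk02II} was inconsistent, and handling it requires the orientation analysis of Sections 8.7 and 8.8 of \cite{fooo08} carried out verbatim in the anchored setting.
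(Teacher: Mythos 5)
Your proposal is correct and follows essentially the same route as the paper, which simply states that the proof is identical to that of Theorem 3.7.21 in \cite{fooo08} (strip breaking plus disc bubbling on either boundary, coherent orientations, energy positivity for the filtration), with the routine change of coefficients from $\Lambda_{0,nov}$ to $\Lambda(L_0,L_1;\ell_{01})$. Your write-up just makes explicit the boundary analysis and bookkeeping that the paper delegates to \cite{fooo08}.
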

See Section 3.7 \cite{fooo08} for the definition of filtered $A_{\infty}$ bimodules.
(In \cite{fooo08} the case of universal Novikov ring as a coefficient is considered.
It is easy to modify to our case of $\Lambda(L_0,L_1)$ coefficient.)
The proof of Theorem \ref{thm;bimodule} is the same as that of Theorem 3.7.21 \cite{fooo08}.
\par
In the case where both $L_0, \, L_1$ are
unobstructed, we can carry out this
deformation of $\mathfrak n$ using bounding cochains $b_0$
and $b_1$ of $CF(L_0)$ and $CF(L_1)$ respectively, in
a way similar to $\frak m^b$. Namely we define
$\delta_{b_1,b_0} : CF((L_1,\gamma_1),
(L_0,\gamma_0)) \to CF((L_1,\gamma_1),
(L_0,\gamma_0))$ by
$$
\delta_{b_1,b_0}(x) =
\sum_{k_1,k_0} \frak n_{k_1,k_0} (b_1^{\otimes k_1} \otimes
x \otimes b_0^{\otimes k_0}) = \mathfrak{\widehat n}(e^{b_1},x,e^{b_0}).
$$
We can generalize the story to the case where $L_0$ has clean intersection
with $L_1$, especially to the case $L_0=L_1$.
In the case $L_0=L_1$ we have
$\frak n_{k_1,k_0} = \frak m_{k_0+k_1+1}$. So
in this case, we have
$
\delta_{b_1,b_0}(x) = \frak m(e^{b_1},x,e^{b_0}).
$
\par
We define Floer cohomology of the pair $(L_0,\gamma_0,\lambda_0)$, $(L_1,\gamma_1,\lambda_1)$
by
$$
HF((L_1,\gamma_1,b_1),(L_0,\gamma_0,b_0)) = \operatorname{Ker}\delta_{b_1,b_0}/\operatorname{Im}
\delta_{b_1,b_0}.
$$
This is a module over $\Lambda(L_0,L_1;\ell_{01})$.
\begin{thm}
$HF((L_1,\gamma_1,b_1),(L_0,\gamma_0,b_0)) \otimes_{\Lambda(L_0,L_1)} \Lambda_{nov}$
is invariant under the
Hamiltonian isotopies of $L_0$ and $L_1$ and under the
homotopy of bounding cochains $b_0, \, b_1$.
\end{thm}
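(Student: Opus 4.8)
The plan is to reduce the statement to the Hamiltonian-invariance and bounding-cochain-invariance theorems for the \emph{non-anchored} Lagrangian Floer cohomology of \cite{fooo06,fooo08}, exploiting the observation made in the introduction that, after base change to the universal Novikov ring, the anchored complex is a direct summand of the usual one. Indeed, via the embedding \eqref{maptouniN} the complex $\bigl(CF((L_1,\gamma_1),(L_0,\gamma_0)),\delta_{b_1,b_0}\bigr)\otimes_{\Lambda(L_0,L_1;\ell_{01})}\Lambda_{nov}$ is identified with the subcomplex of the full Floer complex of $(L_1,b_1),(L_0,b_0)$ over $\Lambda_{nov}$ spanned by the intersection points lying in the connected component $\Omega(L_0,L_1;\ell_{01})$; and since a Floer strip (possibly with disc bubbles) supplies a homotopy of paths between its two asymptotes, $\delta_{b_1,b_0}$, like $\partial$, preserves the decomposition of this complex according to the components of $\Omega(L_0,L_1)$. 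Hence it is a direct summand, and it suffices (a) to invoke the invariance of the full Floer cohomology over $\Lambda_{nov}$ under Hamiltonian isotopies and under homotopies of bounding cochains, proved in \cite{fooo06,fooo08}, and (b) to check that a Hamiltonian isotopy carries the summand labelled by $[\ell_{01}]$ onto the summand labelled by the transported base path.

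For (b), consider a Hamiltonian isotopy $\{\psi^s\}_{s\in[0,1]}$ with $\psi^0=\mathrm{id}$ moving $L_1$ (the case of $L_0$, and of simultaneous isotopies, being handled in the same way, in turn). One replaces the anchor $\gamma_1$ of $L_1$ by the anchor $\gamma_1'$ of $\psi^1(L_1)$ obtained by concatenating $\gamma_1$ with the trace $s\mapsto\psi^s(\gamma_1(1))$. The resulting base path $\ell'_{01}$ is, up to reparametrization, the image of $\ell_{01}$ under the homeomorphism $\Omega(L_0,L_1)\to\Omega(L_0,\psi^1L_1)$ given by appending that trace; thus it labels the matching component, and gluing the trace identifies $\pi_2(\ell_{01};p)$ with the corresponding set of homotopy classes over the deformed pair, equivariantly for the deck-group actions. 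The continuation map between the two Floer complexes — counting solutions of a Floer equation with a domain-dependent Hamiltonian term interpolating the two boundary-value problems, cut down by ambient chains as in Section 3.7 of \cite{fooo08}, with orientations supplied by Theorem \ref{thm:fooo-ori} and gradings as in Section \ref{sec:grading} — is then a $\Lambda_{nov}$-linear chain map respecting the component decomposition and intertwining $\delta_{b_1,b_0}$ with $\delta_{\psi_*b_1,b_0}$, where $\psi_*b_1$ is the image of $b_1$ under the $A_\infty$ homotopy equivalence $CF(L_1)\simeq CF(\psi^1L_1)$. Composing with the continuation map of the reverse isotopy and running the standard homotopy-of-homotopies argument of \cite{fooo06} shows it is a homotopy equivalence; well-definedness of $\psi_*b_1$ up to homotopy, and the invariance under a homotopy of $b_0,b_1$ itself, then follow from the algebraic theory of $A_\infty$ bimodules and of homotopies of bounding cochains in Chapter 4 of \cite{fooo06}.

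The ingredient genuinely special to the anchored setting, and the one to be checked with care, is that the continuation map for a non-exact Hamiltonian isotopy, as well as the auxiliary choices in the anchor transport (a different homotopy class for the path $s\mapsto\psi^s$, or a different representative of $\gamma_1'$ within its homotopy class), alter $\ell'_{01}$ only inside its homotopy class and shift the action functional, hence the energy filtration of Definition \ref{efilt}, by a single real constant — in other words multiply the comparison map by a factor $T^{c}$, exactly as in Corollary \ref{cor:spectrum}. Since $T^{c}$ is a unit in $\Lambda_{nov}$, these shifts become invisible after $\otimes_{\Lambda(L_0,L_1)}\Lambda_{nov}$, which is precisely why the theorem is stated with this base change — the genuinely filtered group $HF((L_1,\gamma_1,b_1),(L_0,\gamma_0,b_0))$ over $\Lambda(L_0,L_1;\ell_{01})$ being \emph{not} Hamiltonian invariant on the nose. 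Consequently the main obstacle is not analytic: transversality, Gromov compactness, gluing and the Kuranishi package are imported verbatim from \cite{fooo06,fooo08}. It is rather the bookkeeping — tracking the $\pi_2$-arithmetic of the transported base paths together with the accompanying energy identity, and verifying that the gradings and orientations transform consistently — that has to be carried out to conclude that the chain homotopy equivalence is at once $\Lambda_{nov}$-linear and summand-preserving.
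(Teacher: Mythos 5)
Your proposal is correct and follows essentially the route the paper intends: the paper omits the argument entirely, saying only that the proof is the same as that of Theorem 4.1.5 of \cite{fooo08}, i.e.\ the standard FOOO continuation-map/moving-boundary and homotopy-of-homotopies machinery adapted to the anchored setting. Your additional observations — that the (deformed) differential and the continuation maps preserve the decomposition by connected components of $\Omega(L_0,L_1)$ labelled by the transported base path, and that the resulting filtration shift is multiplication by a unit $T^{c}$, which is exactly why the statement is made after $\otimes_{\Lambda(L_0,L_1)}\Lambda_{nov}$ — are precisely the points that make that argument carry over, so there is no discrepancy with the paper.
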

The proof is the same as the proof of Theorem 4.1.5 \cite{fooo08}
and so omitted.
\subsection{Products}\label{subsecprod}
Let $\frak L = (L_0, L_1, \cdots, L_k)$ be a
chain of compact Lagrangian submanifolds in $(M,\omega)$
that intersect pairwise transversely without triple intersections.
\par
Let $\vec z = (z_{0k},z_{k(k-1)},\cdots,z_{10})$ be a set of distinct points on $\partial D^2
= \{ z\in \C \mid \vert z\vert = 1\}$. We assume that
they respect the counter-clockwise cyclic order of $\partial D^2$.
The group $PSL(2;\R)\cong \operatorname{Aut}(D^2)$ acts on the set
in an obvious way. We denote by $\mathcal M^{\text{main},\circ}_{k+1}$ be
the set of $PSL(2;\R)$-orbits of $(D^2,\vec z)$.
\par
In this subsection, we consider only the case $k \geq 2$
since the case $k=1$ is already discussed in the last subsection.
In this case there is no automorphism on the domain $(D^2, \vec z)$, i.e.,
$PSL(2;\R)$ acts freely on the set of such $(D^2, \vec z)$'s.
\par
Let
$p_{j(j-1)} \in L_j \cap L_{j-1}$
($j = 0,\cdots k$), be a set of intersection points.
\par
We consider the pair $(w;\vec z)$ where $w: D^2 \to M$ is a
pseudo-holomorphic map that satisfies the boundary condition
\begin{subequations}\label{54.15}
\begin{eqnarray}
&w(\overline{z_{j(j-1)}z_{(j+1)j}}) \subset L_j, \label{54.15.1} \\
&w(z_{(j+1)j}) = p_{(j+1)j}\in L_j \cap L_{j+1}. \label{54.15.2}
\end{eqnarray}
\end{subequations}
We denote by $\widetilde{\CM}^{\circ}(\frak L, \vec p)$
the set of such
$((D^2,\vec z),w)$.
\par
We identify two elements $((D^2,\vec z),w)$, $((D^2,\vec z'),w')$
if there exists $\psi \in PSL(2;\R)$ such that
$w \circ \psi = w'$ and $\psi(z'_{j(j-1)}) = z_{j(j-1)}$.
Let ${\CM}^{\circ}(\frak L, \vec p)$ be the set of equivalence classes.
We compactify it by including the configurations with disc or sphere bubbles
attached, and denote it by ${\CM}(\frak L, \vec p)$.
Its element is denoted by $((\Sigma,\vec z),w)$ where
$\Sigma$ is a genus zero bordered Riemann surface with one boundary
components, $\vec z$ are boundary marked points, and
$w : (\Sigma,\partial\Sigma) \to (M,L)$ is a bordered stable map.
\par
We can decompose $\CM(\frak L, \vec p)$ according to the homotopy
class $B \in \pi_2(\frak L,\vec p)$ of continuous maps satisfying
\eqref{54.15.1}, \eqref{54.15.2} into the union
$$
\CM(\frak L, \vec p) = \bigcup_{B \in \pi_2(\frak L;\vec p)}
\CM(\frak L, \vec p;B).
$$
\par
In the case we fix an anchor $\gamma_i$ to each of $L_i$ and put $\CE =
((L_0,\gamma_0),\cdots,(L_k,\gamma_k))$, we consider only
admissible classes $B$ and put
\par
$$
\CM(\CE, \vec p) = \bigcup_{B \in \pi_2^{ad}(\CE;\vec p)}
\CM(\CE, \vec p;B).
$$
\begin{thm}\label{58.21} Let $\frak L
= (L_0,\cdots,L_k)$ be a chain of
Lagrangian submanifolds and
$B \in \pi_2(\frak L;\vec p)$.
Then $\CM(\frak L, \vec p;B)$ has an oriented Kuranishi structure
(with boundary and corners). Its (virtual) dimension satisfies
\begin{equation}\label{dimensionformula}
\dim \CM(\frak L, \vec p;B) = \mu(\frak L,\vec p;B) + n + k-2,
\end{equation}
where $\mu(\frak L,\vec p;B)$ is the polygonal Maslov index of $B$.
\end{thm}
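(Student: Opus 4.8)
The plan is to build the oriented Kuranishi structure on $\CM(\frak L,\vec p;B)$ by exactly the procedure used for the disc moduli spaces $\CM_{k+1}(\beta;L)$ of Subsection~\ref{subsec:objects} and for the Floer moduli spaces $\CM(p,q;B)$, the only genuinely new points being the dimension formula \eqref{dimensionformula} and the bookkeeping of the boundary-and-corner strata in the polygonal setting. As the subsection notes, we may assume $k\ge 2$, so that $PSL(2;\R)$ acts freely on $(D^2,\vec z)$; the case $k=1$ is the Floer-strip situation of Section~\ref{chaincomplex}. Since the polygonal moduli space differs from $\CM_{k+1}(\beta;L)$ only in that consecutive boundary arcs lie on possibly distinct Lagrangians $L_i$ and that a marked point $z_{j(j-1)}$ is automatically sent into the (discrete, by transversality) set $L_{j-1}\cap L_j$, all the hard analysis --- elliptic regularity, exponential decay near each corner after the conformal identification $\varphi_i$, the gluing estimates --- is literally that of \cite{fooo06,fooo08}, and I would import it rather than reprove it.

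First I would realize the moduli problem as the zero locus of a Cauchy--Riemann section. Given $((\Sigma,\vec z),w)\in\CM(\frak L,\vec p;B)$, I would choose a finite-dimensional subspace $E_{((\Sigma,\vec z),w)}\subset L^{p}(\Sigma;w^{*}TM\otimes\Lambda^{0,1})$, supported away from the nodes and corners and invariant under the (finite) automorphism group of the stable map, whose image together with the image of the linearised operator $\delbar_{w;\frak L}$ of \eqref{delbawl} spans $L^{p}$; the implicit function theorem then cuts out a smooth finite-dimensional chart $V_{((\Sigma,\vec z),w)}$ carrying the obstruction bundle $E$ and the tautological section, whose zero set is a neighbourhood of the given point in $\CM(\frak L,\vec p;B)$. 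Over the main stratum (irreducible domain, no bubbles) these charts patch directly; over the degenerate strata one invokes the gluing construction of \cite{fooo08} to produce compatible Kuranishi neighbourhoods. The codimension-one pieces of $\partial\CM(\frak L,\vec p;B)$ are exactly the configurations in which either a disc (on some $L_i$, or, in the clean-intersection variant, at a corner) or a sphere bubbles off, or the polygon breaks at one of the $p_{j(j-1)}$ into two polygons of shorter length; iterating these degenerations produces the corners, which is what `boundary and corners' refers to.

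Next I would compute the virtual dimension. At a point with irreducible domain it equals $\operatorname{Index}\delbar_{w;\frak L}$ plus the dimension of the moduli of domains, and for $k\ge 2$ the latter is $\dim\CM^{\mathrm{main},\circ}_{k+1}=(k+1)-\dim PSL(2;\R)=k-2$. The Fredholm index of $\delbar_{w;\frak L}$ with its jumping Lagrangian boundary condition is $n+\mu(\frak L,\vec p;B)$, the polygonal analogue of Proposition~\ref{prop:delbarindex}: capping each puncture $z_{i(i-1)}$ with the half-strip operator $\delbar_{([p_{(i+1)i},w^{+}_{(i+1)i}];\lambda_{i(i+1)})}$ as in \eqref{summedupindex} gives an operator on a contractible domain whose boundary loop of Lagrangian subspaces has Maslov index zero by the gluing identity \eqref{musum} of Lemma~\ref{thm:poly}, hence has index $n$ by Riemann--Roch; subtracting off the indices $\mu([p_{(i+1)i},w^{+}_{(i+1)i}];\lambda_{i(i+1)})$ of the caps, which sum to $-\mu(\frak L,\vec p;B)$ by \eqref{musum} and Proposition~\ref{prop:delbarindex}, leaves $\operatorname{Index}\delbar_{w;\frak L}=n+\mu(\frak L,\vec p;B)$. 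Adding the $k-2$ domain parameters yields $\dim\CM(\frak L,\vec p;B)=\mu(\frak L,\vec p;B)+n+k-2$, consistent with the disc formula \eqref{eq:dim}.

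Finally, the orientation. The determinant line bundle $\det\delbar_{(\frak L;\vec p;B)}\to\CF(\frak L;\vec p;B)$ of \eqref{eq:detB} is, by Theorem~\ref{thm:fooo-ori2}, canonically oriented once the data $o_{p_{ij}}$ on $\operatorname{Index}\delbar_{\lambda_{p_{ij}}}$ are fixed, with an orientation $o_{k+1}(\vec p;\frak L;B)$ compatible with the gluing maps; tensoring with the canonical orientation of $\CM^{\mathrm{main},\circ}_{k+1}$, incorporating the marked-point and codimension conventions of Remark~\ref{fooo8-5}, and checking compatibility with disc- and sphere-bubbling as in \cite[Chapter~8]{fooo08} orients the Kuranishi structure. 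The main obstacle, as always in this circle of constructions, is not any single estimate but \emph{coherence}: one must verify that the Kuranishi charts on the interior, on the bubbling strata, and on the polygon-splitting strata fit together into a single Kuranishi structure with boundary and corners, with the virtual dimension dropping by exactly one at each elementary degeneration and with the orientations matching across every gluing. This compatibility is obtained by transcribing the constructions of \cite{fooo08} to the present polygonal setting, and that transcription is where the real content of the theorem lies.
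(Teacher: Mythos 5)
Your proposal is correct and follows essentially the same route as the paper: the dimension formula is obtained by writing $\dim\CM(\frak L,\vec p;B)=\operatorname{Index}\delbar_{w;\frak L}+(k-2)$, gluing in the half-strip cap operators to form \eqref{summedupindex}, applying the index sum formula together with Proposition \ref{prop:delbarindex} and Lemma \ref{thm:poly}, and noting that the glued operator on $D^2$ has index $n$ because its Lagrangian boundary loop is null-homotopic. The Kuranishi structure and orientations are, as in the paper, imported from the machinery of \cite{fooo08} and Theorem \ref{thm:fooo-ori2}, so no substantive difference remains.
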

\begin{proof}
We consider the operator $\delbar_{w;\frak L}$ in (\ref{delbawl}).
It is easy to see that
\begin{equation}\label{dimwfrak}
\text{Index}\,\, \delbar_{w;\frak L} + k -2= \dim \CM(\frak L, \vec p;B).
\end{equation}
In fact $k-2$ in the left hand side is the dimension of
$\mathcal M^{\text{main},\circ}_{k+1}/PSL(2;\R)$.
\par
We next consider the Fredholm operator
(\ref{summedupindex}).
By (\ref{dimwfrak}), Lemma \ref{thm:poly} and index sum formula, we have
\begin{equation}
\text{Index}\,\, \delbar_{w;\frak L} - \mu(\frak L,\vec p;B)
= \text{Index of (\ref{summedupindex})}.
\end{equation}
We remark that the operator (\ref{summedupindex}) is a
Cauchy-Riemann operator of the trivial $\C^n$ bundle on $D^2$ with
boundary condition determined by a certain loop in
$Lag(\C^n,\omega)$. By construction it is easy to see that this loop
is homotopic to a constant loop. Therefore, the index of
(\ref{summedupindex}) is $n$. Theorem \ref{58.21} follows.
\end{proof}
We next take graded anchors $(\gamma_i,\lambda_i)$
to each $L_i$ and fix the data as in Situation \ref{pairdeta}.
We assume that $B$ is admissible and write
$
B = [w^-_{01}]\#[w^-_{12}] \# \cdots \# [w^-_{k0}]
$ as in Definition \ref{classB}.
We put $w^+_{(i+1)i}(s,t) = w^-_{i(i+1)}(1-s,t)$ as in (\ref{wplus}).
We also put $w^+_{k0}(s,t) = w^+_{0k}(s,1-t)$.
($[w^+_{k0}] \in \pi_1(\ell_{k0};p_{k0})$.)
We also put $\lambda_{k0}(t) = \lambda_{0k}(1-t)$.
\begin{lem}\label{dimanddeg}
If $\dim \CM(\frak L, \vec p;B) = 0$, we have
\begin{equation}\label{misdeg1}
(\mu([p_{k0},w^+_{k0}];\lambda_{0k}) - 1)
=
1 + \sum_{i=1}^k (\mu([p_{i(i-1)},w^+_{i(i-1)}];\lambda_{(i-1)i}) - 1).
\end{equation}
\end{lem}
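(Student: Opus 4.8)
The plan is to obtain \eqref{misdeg1} as a purely formal consequence of three facts already established: the dimension formula of Theorem~\ref{58.21}, the polygonal gluing identity of Lemma~\ref{thm:poly}, and the degree duality of Lemma~\ref{degPDlem}. No new geometric input is needed; what is left is index bookkeeping.

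First I would feed the hypothesis $\dim\CM(\frak L,\vec p;B)=0$ into Theorem~\ref{58.21}, which gives $\mu(\frak L,\vec p;B)=2-n-k$, and substitute this into Lemma~\ref{thm:poly} to obtain
$$
\sum_{i=0}^{k}\mu([p_{(i+1)i},w^+_{(i+1)i}];\lambda_{i(i+1)})=n+k-2 .
$$
Next I would split off the term with $i=k$. With the standing conventions (indices mod $k+1$, $L_{k+1}=L_0$) this term is $\mu([p_{0k},w^+_{0k}];\lambda_{k0})$, and re-indexing the remaining $k$ summands by $j=i+1$ rewrites the rest of the sum as $\sum_{i=1}^{k}\mu([p_{i(i-1)},w^+_{i(i-1)}];\lambda_{(i-1)i})$; here one notes that for $1\le i\le k$ the strip $w^+_{i(i-1)}$ is the reflection $w^-_{(i-1)i}(1-s,t)$ of one of the strips in $B=[w^-_{01}]\#\cdots\#[w^-_{k0}]$, while the $i=k$ term comes from the last strip $w^-_{k0}$.

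The one step that takes a moment's care is converting the $i=k$ term into $\mu([p_{k0},w^+_{k0}];\lambda_{0k})$, the quantity on the left of \eqref{misdeg1}. I would use the conventions $w^+_{k0}(s,t)=w^+_{0k}(s,1-t)$ and $\lambda_{k0}(t)=\lambda_{0k}(1-t)$, together with $\ell_{0k}(t)=\ell_{k0}(1-t)$ (immediate from \eqref{ellij}): these say exactly that the pairs $(w^+_{0k},\lambda_{k0})$ and $(w^+_{k0},\lambda_{0k})$ are interchanged by the $t\mapsto 1-t$ operation of Lemma~\ref{degPDlem}, so that lemma yields $\mu([p_{0k},w^+_{0k}];\lambda_{k0})=n-\mu([p_{k0},w^+_{k0}];\lambda_{0k})$. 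Substituting this into the displayed identity, cancelling $n$, and solving gives
$$
\mu([p_{k0},w^+_{k0}];\lambda_{0k})=\sum_{i=1}^{k}\mu([p_{i(i-1)},w^+_{i(i-1)}];\lambda_{(i-1)i})-(k-2),
$$
which is exactly \eqref{misdeg1} after distributing the $-1$'s over the sum on both sides.

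I expect the only place where one can slip is this last paragraph: one must check that the flip/reparametrization conventions defining $w^+_{k0},\lambda_{k0}$ out of $w^+_{0k},\lambda_{0k}$ match the hypotheses of Lemma~\ref{degPDlem} on the nose — in particular that $w^+_{k0}$ indeed lies over the path $\overline{\ell_{k0}}=\ell_{0k}$ with grading $\lambda_{0k}$ and endpoint $p_{k0}=p_{0k}$, so that the constant produced by Lemma~\ref{degPDlem} is the ambient half-dimension $n$ — and that the cyclic re-indexing modulo $k+1$ in passing from the symmetric sum of Lemma~\ref{thm:poly} to the asymmetric \eqref{misdeg1} is carried out consistently. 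Everything else is arithmetic.
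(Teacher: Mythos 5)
Your proposal is correct and follows exactly the paper's own argument: substitute the dimension formula of Theorem \ref{58.21} into Lemma \ref{thm:poly} to get $\sum_{i=0}^{k}\mu([p_{(i+1)i},w^+_{(i+1)i}];\lambda_{i(i+1)})=n+k-2$, then convert the $i=k$ term via Lemma \ref{degPDlem} using the conventions $w^+_{k0}(s,t)=w^+_{0k}(s,1-t)$, $\lambda_{k0}(t)=\lambda_{0k}(1-t)$, and rearrange. Your bookkeeping of the cyclic re-indexing and of the flip conventions is consistent with the paper's, so no gap remains.
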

\begin{proof}
Lemma \ref{thm:poly} and Theorem \ref{58.21} implies
$$
\sum_{i=0}^{k}
\mu([p_{(i+1)i},w^+_{(i+1)i}];\lambda_{i(i+1)})
= n + k -2
$$
in the case $\dim \CM(\frak L, \vec p;B) = 0$.
By Lemma \ref{degPDlem} we have
$$
\mu([p_{0k},w^+_{0k}];\lambda_{k0})=
- \mu([p_{k0},w^+_{k0}];\lambda_{0k}) + n.
$$
Substituting this into the above identity and rearranging the identity, we obtain
the lemma.
\end{proof}
Using the case $\dim \CM(\frak L, \vec p;B) = 0$, we define the $k$-linear operator
$$
\frak m_k:
CF((L_k,\gamma_k),(L_{k-1},\gamma_{k-1}))
\otimes \ldots \otimes
CF((L_1,\gamma_1),(L_{0},\gamma_{0}))\to CF((L_k,\gamma_k),(L_0,\gamma_0))
$$
as follows:
\begin{equation}\label{catAinifwob}
\aligned
\frak m_{k}([p_{k(k-1)},w^+_{k(k-1)}],& [p_{(k-1)(k-2)},w^+_{(k-1)(k-2)}],\cdots, [p_{10},w^+_{10}])) \\
&= \sum \#(\CM_{k+1}(\frak L;\vec p;B)) \, [p_{k0},w^+_{k0}]).
\endaligned\end{equation}
Here the sum is over the basis $[p_{k0},w^+_{k0}]$ of
$CF((L_k,\gamma_k),(L_0,\gamma_0))$, where
$\vec p = (p_{0k},p_{k(k-1)},\cdots,p_{10})$,
$B$ is as in Definition \ref{classB}, and
$w^+_{(i+1)i}(s,t) = w^-_{i(i+1)}(1-s,t)$.
\par
The formula (\ref{misdeg1}) implies that $\frak m_k$ above has
degree one.
\par
In general the operator $\frak m_k$ above does {\it not} satisfy the
$A_{\infty}$ relation by the same reason as that of the case of
boundary operators (see Section \ref{chaincomplex}). We need to use
bounding cochains $b_i$ of $L_i$ to deform $\frak m_k$ in the same
way as the case of $A_\infty$-bimodules (Subsection
\ref{subsec:morphisms}), whose explanation is now in order.
\par
Let $m_0,\cdots,m_k \in \Z_{\ge 0}$ and $\CM_{m_0,\cdots,m_k}(\frak L,
\vec p;B)$ be the moduli space obtained from the set of $((D^2,\vec
z),(\vec z^{(0)},\cdots,\vec z^{(k)}),w))$ by taking the quotient by
$PSL(2,\R)$-action and then by taking the stable map
compactification as before. Here $ z^{(i)} =
(z^{(i)}_1,\cdots,z^{(i)}_{k_i})$ and $z^{(i)}_{j} \in
\overline{z_{(i+1)i}z_{i(i-1)}}$ such that
$z_{(i+1)i},z^{(i)}_1,\cdots,z^{(i)}_{k_i}, z_{i(i-1)}$ respects the counter
clockwise cyclic ordering.
$$
((D^2,\vec z),(\vec z^{(0)},\cdots,\vec z^{(k)}),w))
\mapsto (w(z^{(0)}_1),\cdots,w(z^{(k)}_{m_k}))
$$
induces an evaluation map:
$$
ev=(ev^{(0)},\cdots,ev^{(k)}): \CM_{m_0,\cdots,m_k}(\frak L, \vec p;B)
\to \prod_{i=0}^k L_i^{m_i}.
$$
Let $P^{(i)}_j$ be smooth singular chains of $L_i$ and put
$$
\vec P^{(i)} = (P^{(i)}_1,\cdots,P^{(i)}_{m_i}),
\qquad
\vec{\vec P} = (\vec P^{(0)},\cdots,\vec P^{(k)})
$$
We then take the fiber product to obtain:
$$
\CM_{m_0,\cdots,m_k}(\frak L, \vec p;\vec{\vec P};B) =
\CM_{m_0,\cdots,m_k}(\frak L, \vec p;B) \times_{ev} \vec{\vec P}.
$$
We use this to define
$$
\aligned &\frak m_{k;m_0,\cdots,m_k} : B_{m_k}(CF(L_k)) \otimes
CF((L_k,\gamma_k),(L_{k-1},\gamma_{k-1}))
\otimes \cdots\\
&\quad\otimes CF((L_{1},\gamma_{1}),(L_0,\gamma_{0}))
\otimes B_{m_0}(CF(L_0))
\to CF((L_{k},\gamma_{k}),(L_0,\gamma_{0}))
\endaligned
$$
by
$$
\aligned
\frak m_{k;m_0,\cdots,m_k}
(\vec P^{(k)},[p_{k(k-1)},w^+_{k(k-1)}],&\cdots,[p_{10},w^+_{10}],
\vec P^{(0)})
\\
& = \sum \#(\CM_{k+1}(\frak L;\vec p;\vec{\vec P};B)) \, [p_{k0},w_{k0}].
\endaligned$$
Finally for each given $b_i \in CF(L_i)[1]^0$ ($b_i \equiv 0 \mod \Lambda_+$),
$\vec b =(b_0,\cdots,b_k)$, and $x_i \in CF((L_i,\gamma_i),(L_{i-1},\gamma_{i-1}))$, we put
\begin{equation}\label{mkcorrected}
\frak m_k^{\vec b}(x_k,\cdots,x_1) = \sum_{m_0,\cdots,m_k} \frak
m_{k;m_0,\cdots,m_k} (b_k^{m_k},x_k,b_{k-1}^{m_{k-1}},\cdots,x_1,b_0^{m_0}).
\end{equation}
\begin{thm}
If $b_i$ satisfies the Maurer-Cartan equation $(\ref{eq:MC})$ then
$\frak m_k^{\vec b}$ in $(\ref{mkcorrected})$ satisfies the
$A_{\infty}$ relation
\begin{equation}\label{Ainftyrel}
\sum_{k_1,k_2,i}
(-1)^* \frak m_{k_1}(x_k,\cdots,\frak m_{k_2}(x_{k-i-1},\cdots,x_{k-i-k_2}),\cdots,x_1) = 0
\end{equation}
where we take sum over $k_1+k_2=k+1$, $i=-1,\cdots,k-k_2$.
(We write $\frak m_k$ in place of $\frak m^{\vec b}_k$ in $(\ref{Ainftyrel})$.)
The sign $*$ is
$
* = i + \deg x_k +\cdots + \deg x_{k-i}.
$
\end{thm}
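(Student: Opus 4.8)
The plan is to run the usual cobordism argument for $A_\infty$-relations, adapted to the polygonal and anchored setting, along the lines of the proof of Theorem 3.7.21 in \cite{fooo08} and of the Fukaya category construction in \cite{Fuk02II}. First I would fix $k$, an anchored Lagrangian chain $\CE$ with underlying chain $\frak L$, a tuple $\vec p$ of pairwise intersection points, and input chains $\vec{\vec P}$, and then for each admissible class $B \in \pi_2^{ad}(\CE;\vec p)$ and each tuple $(m_0,\cdots,m_k)$ consider the moduli space $\CM_{m_0,\cdots,m_k}(\frak L,\vec p;\vec{\vec P};B)$ in the case where its virtual dimension is $1$. By Theorem \ref{58.21}, together with its evident extension to the moduli spaces carrying boundary marked-point insertions, this space has an oriented Kuranishi structure with boundary and corners; after choosing a compatible system of multisections its virtual fundamental chain is a compact oriented $1$-manifold, so the signed count of its boundary points is zero. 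I would then read off \eqref{Ainftyrel} by identifying the codimension-one boundary strata.

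Next I would enumerate those strata. They come in two types. The first is \emph{polygon splitting}, where the domain disc degenerates into two discs joined at a boundary node, with the marked points $z_{j(j-1)}$ and the interior insertion points distributed between the two components and the map splitting as $B = B_1 \# B_2$. Using the overlapping property \eqref{eq:ellij} and Definition \ref{classB}, one checks that $B_1$ and $B_2$ are again admissible for the appropriate sub-chains and that the intermediate labels $[q,w^+]$ glue consistently, so that such a stratum is a fiber product of two polygon moduli spaces of the kind entering $\frak m_{k_1}$ and $\frak m_{k_2}$ with $k_1 + k_2 = k+1$. The second type is \emph{disc bubbling}, where a holomorphic disc with boundary on some $L_i$ splits off along the edge mapped to $L_i$, giving a fiber product $\CM_{\ell+1}(\beta;L_i) \times_{L_i} (\text{a smaller polygon moduli space})$. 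Sphere bubbling, and collisions of interior marked points, occur in codimension $\ge 2$ and contribute nothing.

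I would then sum over all $(m_0,\cdots,m_k)$ and over all splittings. After this summation the polygon-splitting strata should assemble, via the orientation compatibility of Theorem \ref{thm:fooo-ori2} and the degree-shift conventions $\deg' = \deg - 1$, into precisely the signed sum $\sum(-1)^*\frak m_{k_1}(x_k,\cdots,\frak m_{k_2}(\cdots),\cdots,x_1)$ appearing in \eqref{Ainftyrel}, with $* = i + \deg x_k + \cdots + \deg x_{k-i}$. For the disc-bubbling strata, I would note that for each fixed $L_i$ the inputs on the bubbled-off disc are forced by the expansion $b_i^{m_i}$ to be copies of $b_i$, so that the total contribution of bubbling on $L_i$ equals the image under the cut-down polygon operator of $\sum_\ell \frak m_\ell(b_i,\cdots,b_i)$, which is $0$ because $b_i$ solves the Maurer-Cartan equation \eqref{eq:MC}. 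Hence only the polygon-splitting strata remain, and the vanishing of their total signed count is exactly \eqref{Ainftyrel}. Along the way I would record that the sums in \eqref{mkcorrected} converge in the non-Archimedean topology, which follows from the energy filtration and the finiteness condition built into $\Lambda(L_i)$.

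The step I expect to be the main obstacle is the orientation bookkeeping: showing that the induced boundary orientation of each polygon-splitting stratum agrees with the product orientation up to exactly the sign $(-1)^*$. This requires propagating the orientations fixed in Situation \ref{pairdeta} through the gluing of the operators $\delbar_{w;\frak L}$ and $\delbar_{([p,w];\lambda_{01})}$ as in Theorem \ref{thm:fooo-ori2}, while tracking the orientation of the parameter space $\CM^{\text{main},\circ}_{k+1}$, the degree shifts, and the Koszul signs coming from reordering the $x_i$ and the bounding cochains $b_i$. This is the kind of computation carried out in Chapter 8 of \cite{fooo08}, and I would invoke it rather than redo it; the one genuinely new check, that admissibility of $B$ passes to $B_1$ and $B_2$ in every degeneration, is immediate from the constructions of Section \ref{sec:gluinghomotopy}.
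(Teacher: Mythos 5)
Your overall strategy (one-dimensional polygon moduli spaces, boundary strata of polygon-splitting and disc-bubbling type, Maurer--Cartan cancelling the bubbling terms, orientations imported from Chapter 8 of \cite{fooo08}) is the standard argument and is essentially what the paper relies on, except that the paper does not re-run it: it simply quotes the non-anchored $A_\infty$ relation from Theorem 4.17 of \cite{Fuk02II} and isolates what is genuinely new in the anchored setting. The problem is that you dismiss exactly that new ingredient. You write that ``admissibility of $B$ passes to $B_1$ and $B_2$ in every degeneration'' is ``immediate from the constructions of Section \ref{sec:gluinghomotopy}.'' It is not, and the paper devotes a separate statement (Lemma \ref{lem:split}) and a genuine limiting argument to it. Nothing in Section \ref{sec:gluinghomotopy} gives a splitting property for admissible classes; indeed the paper stresses there that not every class in $\pi_2(\CE;\vec p)$ is admissible. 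When a one-dimensional space $\CM(\frak L,\vec p;B)$ with $B$ admissible degenerates into $\CM(\frak L',\vec p\,';B_1)\times\CM(\frak L'',\vec p\,'';B_2)$, a new intersection point $p_{ij}\in L_i\cap L_j$ appears which is not among the original corners, and one must produce bounding strips $w^-_{ij}\in\pi_2(\ell_{ij};p_{ij})$ and $w^-_{ji}\in\pi_2(\ell_{ji};p_{ji})$ compatible with the given anchors $\gamma_i,\gamma_j$, and moreover such that $[w^-_{ij}\# w^-_{ji}]$ is the class of the constant strip, so that regluing recovers $B$. A priori $p_{ij}$ need not even be an admissible intersection point for the anchored pair, in which case $[p_{ij},w]$ would not be a generator of the anchored complex and the boundary stratum could not be expressed as a composition $\frak m_{k_1}\circ\frak m_{k_2}$ at all.

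The paper's proof of Lemma \ref{lem:split} handles this by exploiting admissibility of $B$ to choose, on each polygon $u_a$ of the degenerating sequence, a point $x_a\in u_a(D^2)$ together with paths $\gamma_a$ from the base point $y$ to $x_a$ and paths $\gamma_{a,i}$ inside $u_a(D^2)$ to $L_i$, with $\gamma_a*\gamma_{a,i}$ homotopic to the anchor $\gamma_i$; Gromov convergence then lets one push $x_a\to p_{ij}$, make $\gamma_{a,i}$ collapse to the constant path at $p_{ij}$, and conclude that the constant strips $\widehat p_{ij}$ serve as the required bounding strips, giving $B_1=[w^-_{01}\#\widehat p_{ij}\#\cdots]$ etc. This is a short but non-trivial geometric argument, not a formality; without it (or some substitute) your cobordism bookkeeping does not close up within the anchored operations, because the sum in \eqref{mkcorrected} runs only over admissible classes. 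So the gap in your proposal is precisely the step you declared immediate: you need to state and prove the analogue of Lemma \ref{lem:split} before the boundary strata can be identified with the terms of \eqref{Ainftyrel}. The rest of your outline, including the sign and convergence remarks, is consistent with the paper's (cited) argument.
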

The non-anchored version is proved in Theorem 4.17 \cite{Fuk02II}.
In order to translate it to the anchored version we only need to
show the following.
\begin{lem}\label{lem:split}
Let $\CE = ((L_0,\gamma_0),\cdots,(L_i,\gamma_i),\cdots,
(L_j,\gamma_j),\cdots,(L_k,\gamma_k)), \, \vec p = (p_{k(k-1)},\cdots, p_{j(j-1)},\cdots, p_{i(i-1)},\cdots
p_{10})$ and $B \in \pi_2^{ad}(\CE;\vec p)$ be admissible.
Suppose that the sequence $u_i \in \CM(\frak L, \vec p;B)$ converges
to an element in the product $\CM(\frak L', \vec p\,';B_1) \times
\CM({\frak L^{\prime\prime}},\vec p^{\,\prime\prime};B_2)$ where
$$\frak L' = (L_0,\cdots,L_i,L_j,\cdots,L_k), \quad
\frak L^{\prime\prime} = (L_i,L_{i+1},\cdots,L_j)$$
and
$$\vec p\,' = (p_{0k},p_{k(k-1)},\cdots,p_{(j+1)j},p_{ji},p_{i(i-1)},\cdots,p_{10}),
\quad
\vec p^{\,\prime\prime} = (p_{ij},p_{j(j-1)}\cdots,p_{(i+1)i})
$$
for some $p_{ij}= p_{ji} \in L_i \cap L_j$.
\par
Then $B_1$, $B_2$ are $\CE'$, $\CE^{\prime\prime}$
admissible, respectively.
\end{lem}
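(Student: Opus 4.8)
The plan is to argue directly from Definition \ref{classB}: a class is admissible exactly when it is a cyclic gluing of bounding strips attached to the anchors by the boundary conditions \eqref{3.5form}--\eqref{3.6form}. First I would recast the hypothesis $B\in\pi_2^{ad}(\CE;\vec p)$ geometrically. Fix bounding strips $w^-_{m(m+1)}$ ($m=0,\dots,k$) with $B=[w^-_{01}]\#\cdots\#[w^-_{k0}]$ and, on a polygon $v:D^2\to M$ representing $B$, an \emph{anchor fan}: an interior hub $\star$ with $v(\star)=y$ and legs from $\star$ to the $L_m$-edges along which $v$ is the anchor $\gamma_m$, so that cutting $D^2$ along the legs recovers the $w^-_{m(m+1)}$. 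The legs meet the boundary in the cyclic order of the edges $L_0,\dots,L_k$, so the concatenation $a$ of the $\gamma_i$-leg with the $\gamma_j$-leg passes through $\star$, joins the $L_i$-edge to the $L_j$-edge, and has on one side exactly the edges $L_{i+1},\dots,L_{j-1}$ and the strips $w^-_{i(i+1)},\dots,w^-_{(j-1)j}$; by the overlapping property \eqref{eq:ellij}, $v$ restricted to $a$ is $\overline{\gamma_i}*\gamma_j=\ell_{ij}$. On the other hand the degenerating sequence produces, for large index, a thin neck near $p_{ij}$, and in the limit $B=B_1\# B_2$ with the gluing along a node mapping to $p_{ij}$; the separating arc $\delta$ (a cross-section of the neck) joins the $L_i$-edge to the $L_j$-edge and has on one side precisely the edges $L_{i+1},\dots,L_{j-1}$ and the vertices $p_{(i+1)i},\dots,p_{j(j-1)}$.

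The heart of the argument is that $a$ and $\delta$ induce the same partition of the marked points, hence are isotopic in $D^2$; arranging them disjoint, they cobound an embedded collar $K$, and $w'_{ij}:=v|_K$ (taken along the approximants) stabilizes to a map of a strip with sides on $L_i$ and on $L_j$, one end equal to $\ell_{ij}$ and the other equal to the constant $p_{ij}$. After matching the orientation conventions this is exactly a bounding strip for the anchored pair $((L_i,\gamma_i),(L_j,\gamma_j))$; in particular $\pi_2(\ell_{ij};p_{ij})\neq\emptyset$, so $p_{ij}$ is admissible, and the orientation-reversed reparametrization $w''_{ji}$ is a bounding strip for the closing pair of $\CE''$, tracing $\ell_{ji}=\overline{\ell_{ij}}$.

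Finally one reads off the decompositions. The region $D_1$ splitting off $\frak L'$ is the union of the $\frak L'$-part of the fan with the collar $K$, so $B_1=v|_{D_1}=[w^-_{01}]\#\cdots\#[w^-_{(i-1)i}]\#[w'_{ij}]\#[w^-_{j(j+1)}]\#\cdots\#[w^-_{k0}]$ -- a gluing of bounding strips for $\frak L'$ with $w'_{ij}$ in the slot of the adjacent pair $(L_i,L_j)$ and the rest inherited -- whence $B_1\in\pi_2^{ad}(\CE';\vec p\,')$. On the $\frak L''$ side, the $\frak L''$-part of the fan together with $K$ is $[w^-_{i(i+1)}]\#\cdots\#[w^-_{(j-1)j}]=B_2\#[w'_{ij}]$ (gluing at $p_{ij}$); gluing the reflected strip $w''_{ji}$ back along the $\ell$-edge cancels the $[w'_{ij}]$-cap, giving $B_2=[w^-_{i(i+1)}]\#\cdots\#[w^-_{(j-1)j}]\#[w''_{ji}]\in\pi_2^{ad}(\CE'';\vec p^{\,\prime\prime})$. (When $\frak L'$ or $\frak L''$ is a single pair this reduces to Lemma \ref{admissibilityfor2}; the only manipulations used are the gluing maps \eqref{eq:pi2pell0} and their polygonal analogues.)

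I expect the main obstacle to be making the anchor fan of $B$ compatible with the degeneration -- i.e.\ choosing the fan (along the approximants) so that two of its legs reproduce the pinch locus up to isotopy, so that the collar $K$ and the new bounding strips $w'_{ij}$, $w''_{ji}$ are genuinely well defined and satisfy \eqref{3.5form}--\eqref{3.6form}. Once that compatibility and the attendant orientation bookkeeping are settled, extracting the two admissible presentations of $B_1$ and $B_2$ is formal.
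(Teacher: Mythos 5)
Your proposal is correct and is in essence the paper's own argument recast on the domain side: the paper likewise transports the anchors to the node through the degeneration --- choosing a hub point $x_a$ in the image of $u_a$, a path $\gamma_a$ from $y$ to $x_a$, and legs $\gamma_{a,i}$ inside $u_a(D^2)$ with $\gamma_a * \gamma_{a,i}$ homotopic to the given anchor $\gamma_i$, so that in the limit the $L_i$- and $L_j$-legs collapse to the constant path at $p_{ij}$ --- and then takes the new bounding strips at $p_{ij}$ to be the resulting degenerate ones and reassembles $B_1$ and $B_2$ exactly as you do. The one adjustment your wording needs, and which the paper's formulation already builds in, is that the pseudo-holomorphic approximants need not pass through $y$ nor literally trace the anchors in their images, so the ``anchor fan'' can only be imposed up to homotopy (a hub in the domain, an auxiliary path from $y$ to its image, and legs whose images become the anchors only after this concatenation and up to homotopy) rather than with $v(\star)=y$ and legs equal to $\gamma_m$; this is precisely the compatibility issue you flag in your final paragraph, and resolving it as above reduces your collar construction to the paper's proof.
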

\begin{proof}
For simplicity of notations, we only consider the case $k=3$, $i=1$,
$j=3$. Let $u_a \in \CM(\frak L,\vec p;B)$ which converges to
$u_{\infty} = (u_{\infty,1},u_{\infty,2})$ where
$$\aligned
&u_{\infty,1} \in \CM((L_0,L_1,L_3),(p_{03},p_{31},p_{10});B_1),
\\
&u_{\infty,2} \in \CM((L_1,L_2,L_3),(p_{13},p_{32},p_{21});B_2),
\endaligned
$$
and $p_{13} = p_{31}$. By definition of $\CE$-admissibility of $B$,
there exist homotopy classes $[w^-_{i(i+1)}] \in \pi_2(L_{i(i+1)};\ell_{i(i+1)})$
for $0 \leq i \leq 3$ such that
$B = [w^-_{01}\# w^-_{12} \# w^-_{23} \# w^-_{30}]$.
To prove the required admissibility of $B_1, \, B_2$,
we need to prove the existence of homotopy classes $[w^-_{13}] \in \pi_2(\ell_{13};p_{13})$
and $[w^-_{31}] \in \pi_2(\ell_{31};p_{31})$ such that
$$
[w^-_{01}\# w^-_{13} \# w^-_{30}] = B_1, \, [w^-_{12}\# w^-_{23} \# w^-_{31}] = B_2
$$
and $[w^-_{13}\# w^-_{31}] = [\widehat p_{13},p_{13}]$ where $\widehat p_{13}$ is
the constant map to $p_{13}$. In fact, we will select both
homotopy classes to be that of $\widehat p_{13}=\widehat p_{31}$.
\par
Exploiting $\CE$-admissibility of $B$, we can take the sequences of
points $x_a \in u_a(D^2) \subset M$, of
paths $\gamma_a : [0,1] \to M$ and $\gamma_{a,i} : [0,1] \to
u_a(D^2)$ such that
\beastar
\gamma_a(0) = y, \quad \gamma_a(1) = x_a, \\
\gamma_{a,i}(0) = x_a, \quad \gamma_{a,i}(1) \in L_i
\eeastar
and that
$\gamma_a*\gamma_{a,i}$ is homotopic to the given anchor $\gamma_i$.
\par
Deforming these choices further, we may use the convergence hypothesis to achieve the following
additional properties of $x_a, \, \gamma_a$ and $\gamma_{a,i}$'s:
\begin{enumerate}
\item $\lim_{a \to \infty} x_a = p_{13}$,
\item $\lim_{a\to\infty} \gamma_{a,i}(t) \equiv p_{13}$ for $i=1, \, 3$,
\item $\gamma_{a,i}$ converges to a path $\gamma_{\infty,i}$ as $a \to \infty$,
\item and $\gamma_a$ converges to a path $\gamma_{\infty}$ as $a \to \infty$.
\end{enumerate}
From this and by construction of $\gamma_i, \, \gamma_{a,i}$, it is easy to see that $B_1$ and
$B_2$ are $((L_0,\gamma_{\infty,0}),(L_1,\gamma_{\infty}),(L_3,\gamma_{\infty}))$ admissible
and $((L_1,\gamma_{\infty}),(L_2,\gamma_{2,\infty}),(L_3,\gamma_{\infty}))$ admissible respectively.
In fact, since $\gamma_{\infty}$ is homotopic to $\gamma_i$ for $i=1,3$,
$\gamma_{\infty,j}$ is homotopic to $\gamma_j$ for $j = 0,2$, we can express
$$
B_1 = [w^-_{01}\# \widehat p_{13} \# w^-_{30}], \,
B_2 = [w^-_{12}\# w^-_{23} \# \widehat p_{31}].
$$
This finishes the proof.
\end{proof}
We summarize the above discussion as follows:
\begin{thm}\label{anchoredAinfty}
We can associate an filtered $A_{\infty}$ category to a
symplectic manifold $(M,\omega)$ such that:
\begin{enumerate}
\item Its object is $((L,\gamma,\lambda),b,sp)$ where
$(L,\gamma,\lambda)$ is a graded anchored Lagrangian submanifold, $[b]
\in \CM(CF(L))$
is a bounding cochain and $sp$ is a spin structure of $L$.
\par
\item The set of morphisms is $CF((L_1,\gamma_1),(L_0,\gamma_0))$.
\par
\item $\frak m^{\vec b}_k$ are the operations defined in $(\ref{mkcorrected})$.
\end{enumerate}
\end{thm}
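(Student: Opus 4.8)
The plan is to observe that the statement is a bookkeeping assembly of the constructions carried out in Sections~\ref{sec:gluinghomotopy}--\ref{subsecprod}, so the proof reduces to checking that the pieces already in place satisfy the axioms of a filtered $A_\infty$ category. First I would fix, for each ordered pair of objects $((L_1,\gamma_1,\lambda_1),b_1,sp_1)$, $((L_0,\gamma_0,\lambda_0),b_0,sp_0)$, the base path $\ell_{01}=\overline\gamma_0*\gamma_1$ and the grading $\lambda_{01}=\overline{\lambda_0}*\lambda_1$, and recall from Section~\ref{chaincomplex} that $CF((L_1,\gamma_1),(L_0,\gamma_0))$ is then a well-defined graded $\Lambda(L_0,L_1;\ell_{01})$-module; this is the morphism complex. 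The relative spin structures $sp_0,sp_1$ (with a fixed $st\in H^2(M;\Z_2)$) and the index orientations $o_p$ of Situation~\ref{pairdeta} supply exactly the data needed to orient all the moduli spaces that enter the operations.

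Next I would verify that the operators $\frak m_k^{\vec b}$ of $(\ref{mkcorrected})$ are well defined. By Theorem~\ref{58.21} each moduli space $\CM_{m_0,\cdots,m_k}(\frak L,\vec p;B)$ carries an oriented Kuranishi structure, and Theorem~\ref{thm:fooo-ori2} (with the conventions of Remark~\ref{fooo8-5}) orients it from the $o_{p_{ij}}$ in a way independent of the anchors; the virtual fundamental chain technique then yields the rational numbers $\#\big(\CM_{k+1}(\frak L;\vec p;\vec{\vec P};B)\big)$ in virtual dimension zero. Gromov compactness bounds the set of admissible $B\in\pi_2^{ad}(\CE;\vec p)$ with $\omega(B)\le E_0$ that contribute, so the sum in $(\ref{mkcorrected})$ converges in the non-Archimedean topology and lands in $CF((L_k,\gamma_k),(L_0,\gamma_0))$; that the output $[p_{k0},w^+_{k0}]$ lies in the correct component of the Novikov covering uses the overlapping property $(\ref{eq:ellij})$ of the $\ell_{ij}$. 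Additivity of $\omega$ under gluing — the abstract-index property of $I_\omega$ proved in Section~\ref{sec:pointed} — then gives that $\frak m_k^{\vec b}$ is filtration preserving in the sense of Lemma~\ref{filtpres}, and Lemma~\ref{dimanddeg} shows that $\dim\CM(\frak L,\vec p;B)=0$ is equivalent to $(\ref{misdeg1})$, i.e.\ that $\frak m_k^{\vec b}$ has shifted degree $+1$.

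It then remains to check the $A_\infty$ relation $(\ref{Ainftyrel})$, which is the analytic heart of the matter and where the real work lies, although it is largely imported from the non-anchored theory. One analyzes the codimension-one boundary of $\CM_{m_0,\cdots,m_k}(\frak L,\vec p;B)$: it consists of (i) strata where the domain splits along an interior boundary node into a configuration in $\CM(\frak L',\vec p\,';B_1)\times\CM(\frak L^{\prime\prime},\vec p^{\,\prime\prime};B_2)$, giving the quadratic terms $\frak m_{k_1}(\cdots,\frak m_{k_2}(\cdots),\cdots)$, and (ii) strata where a pseudo-holomorphic disc bubbles off some $L_i$, which are killed by the bounding cochains $b_i$ once $(\ref{eq:MC})$ holds. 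Summing boundary orientations to zero yields $(\ref{Ainftyrel})$ with the stated signs; for the non-anchored version this is Theorem~4.17 of~\cite{Fuk02II}. The only genuinely new point in the anchored setting is that in case (i) the classes $B_1,B_2$ must themselves be admissible for $\CE'$, $\CE^{\prime\prime}$ — this is precisely Lemma~\ref{lem:split} — and that the sub-chain base paths agree with the restrictions of the $\ell_{ij}$ (again $(\ref{eq:ellij})$), so the moduli-space boundary identification of~\cite{Fuk02II} transports verbatim. The delicate transversality, gluing and exponential-decay estimates underlying this boundary description, inside the Kuranishi/virtual fundamental chain framework, are identical to those of~\cite{fooo08,Fuk02II} and are not repeated; the new content is just the homotopical bookkeeping of admissible classes.

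Finally I would record that the data $\big(\{((L,\gamma,\lambda),b,sp)\},\ \{CF((L_1,\gamma_1),(L_0,\gamma_0))\},\ \{\frak m_k^{\vec b}\}\big)$ satisfies the definition of a filtered $A_\infty$ category in the sense of~\cite{Fuk02II}: the morphism modules are filtered, the $\frak m_k^{\vec b}$ are filtered multilinear maps of degree $+1$ obeying $(\ref{Ainftyrel})$, for $k=1,2$ they reduce to the deformed differential $\delta_{b_1,b_0}$ of Subsection~\ref{subsec:morphisms} and to the Floer product, and a homotopy unit in $CF((L,\gamma),(L,\gamma))$ is provided by the fundamental chain of $L$ as in~\cite{fooo08}. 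As in the non-anchored case the resulting category is independent, up to filtered $A_\infty$ equivalence, of the auxiliary almost complex structure and perturbations; since the theorem asserts only existence I would mention this only in passing. I expect the principal difficulty to be organizational rather than conceptual — namely keeping the admissibility constraints, the overlapping property, and the orientation/grading conventions consistent throughout — since all the hard analysis and the key combinatorial Lemma~\ref{lem:split} are already available.
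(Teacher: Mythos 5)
Your proposal matches the paper's own treatment: the theorem is proved there simply as a summary of the preceding constructions, with the $A_\infty$ relation imported from Theorem~4.17 of \cite{Fuk02II} and the single genuinely new ingredient being Lemma~\ref{lem:split} on admissibility of the split classes $B_1,B_2$ at the moduli-space boundary, exactly as you identify. Your additional checks (orientations via Theorems~\ref{thm:fooo-ori2} and \ref{58.21}, degree via Lemma~\ref{dimanddeg}, filtration via Lemma~\ref{filtpres}) are the same bookkeeping the paper performs before stating the theorem, so the approach is essentially identical.
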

\begin{rem}
In Situation \ref{pairdeta}, beside the choices spelled out in $((L,\gamma,\lambda),b,sp)$,
the choice of orientations $o_p$ of $\text{Index}\, \overline{\partial}_{\lambda_p}$
is included. This choice in fact does not affect the module structure
$CF((L_1,\gamma_1),(L_0,\gamma_0))$ up to isomorphism:
if we take an alternative choice $o'_p$ at $p$, then all the
signs appearing in the operations $\frak m_k$ that involves $[p,w]$ for some $w$
will be reversed. Therefore $[p,w] \mapsto -[p,w]$ gives the required isomorphism.
\end{rem}
\begin{rem}
In \cite{Fuk02II}, the filtered $A_{\infty}$ category is defined over $\Lambda_{0,nov}$.
The situation of Theorem \ref{anchoredAinfty} is slightly
different in that $\Lambda(L_0,L_1;\ell_{01})$ or $\Lambda(L)$
are used as the coefficient rings and hence the coefficient rings vary depending on
the objects involved. It is easy to see that the notion of filtered $A_{\infty}$ category
can be generalized to this context.
\par
We can also change the coordinate ring to $\Lambda_{nov}$ by using
the map
$
[p,w] \mapsto T^{\int w^*\omega}e^{\mu(w)/2}\langle p \rangle
$
(Subsection 5.1.3 \cite{fooo08}).
The resulting filtered $A_{\infty}$ category is still different from the non-anchored
version in the case $M$ is not simply connected.
\end{rem}
\begin{rem}
In Theorem \ref{anchoredAinfty}, we assume that our Lagrangian submanifold $L$ is spin.
We can slightly modify the construction to accommodate the relatively spin case as follows:
We will construct the filtered $A_{\infty}$ category of $((M,\omega),st)$
for each choice of $st \in H^2(M;\Z_2)$.
Its objects consist of $((L,\gamma,\lambda),b,sp)$ where
$L$ satisfies $w_2(L) = i^*(st)$ and $(L,\gamma,\lambda),\, b$
are as before. Finally $sp$ is the
stable conjugacy class of relative spin structure of $L$.
(See Definition 8.1.5 \cite{fooo08} for its definition.)
In this way we obtain a filtered $A_{\infty}$ category.
The same remark applies to the non-anchored case.
\end{rem}
The operations $\frak m_k$ are compatible with the filtration.
Namely we have
\begin{prop}\label{filprod}
If $x_i \in F^{\lambda_i}CF((L_i,\gamma_i),(L_{i-1},\gamma_{i-1}))$,
then
$$
\frak m_k^{\vec b}(x_k,\cdots,x_1)
\in F^{\lambda}CF((L_k,\gamma_k),(L_0,\gamma_0))
$$
where
$
\lambda = \sum_{i=1}^{k} \lambda_i.
$
\end{prop}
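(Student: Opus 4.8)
The plan is to trace the filtration level through the definition of $\frak m_k^{\vec b}$ in \eqref{mkcorrected}, reducing everything to the additivity of the symplectic area under gluing of bounding strips. First I would recall from Section~\ref{chaincomplex} that the filtration level of a generator $[p,w]$ is $\CA([p,w]) = \int w^*\omega$, so that the assertion $x_i \in F^{\lambda_i}CF((L_i,\gamma_i),(L_{i-1},\gamma_{i-1}))$ means each generator $[p_{i(i-1)},w^+_{i(i-1)}]$ occurring in $x_i$ satisfies $\int (w^+_{i(i-1)})^*\omega \ge \lambda_i$. I would then fix one such collection of generators, one nonzero structure constant $\#(\CM_{k+1}(\frak L;\vec p;B))$ appearing in \eqref{catAinifwob} (or its bounding-cochain-deformed analogue), and the corresponding output generator $[p_{k0},w^+_{k0}]$, and simply compute $\int (w^+_{k0})^*\omega$ in terms of the $\int (w^+_{i(i-1)})^*\omega$ and $\omega(B)$.

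The key identity is the one recorded in Definition~\ref{classB}: an admissible class decomposes as $B = [w^-_{01}]\#\cdots\#[w^-_{k0}]$, and since $w^+_{(i+1)i}(s,t) = w^-_{i(i+1)}(1-s,t)$ reverses only the $s$-variable, $\int (w^-_{i(i+1)})^*\omega = -\int (w^+_{(i+1)i})^*\omega$. Additivity of the area under $\#$ (which is just additivity of an integral over a subdivided square, the same fact underlying the abstract index $I_\omega$ of the Proposition at the end of Section~\ref{sec:pointed}) gives $\omega(B) = \sum_{i=0}^{k}\int(w^-_{i(i+1)})^*\omega = -\sum_{i=0}^{k}\int(w^+_{(i+1)i})^*\omega$. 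Since $w^+_{k0}$ is glued from the input strips and $B$ in the bookkeeping of \eqref{catAinifwob}, the output area is $\int(w^+_{k0})^*\omega = \sum_{i=1}^{k}\int(w^+_{i(i-1)})^*\omega + \omega(B_{\mathrm{out}})$ where $\omega(B_{\mathrm{out}})\ge 0$ is the area of the holomorphic polygon $u$ counted in $\CM_{k+1}(\frak L;\vec p;B)$; the latter is $\int u^*\omega \ge 0$ by positivity of area for $J$-holomorphic maps. Combining, $\int(w^+_{k0})^*\omega \ge \sum_{i=1}^k \lambda_i = \lambda$, which is exactly the claim. For the deformed operations $\frak m_k^{\vec b}$ one adds the insertions $b_i$, each of which lies in $CF(L_i)$ with $v(b_i)>0$, hence contributes nonnegative area through the chains $P^{(i)}_j$ in $\CM_{m_0,\dots,m_k}(\frak L,\vec p;\vec{\vec P};B)$; this only increases the area, so the inequality is preserved.

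The only mildly delicate point—and the step I would write out most carefully—is confirming that the gluing bookkeeping in \eqref{catAinifwob}–\eqref{mkcorrected} really does identify the output strip $w^+_{k0}$ with the concatenation of the input strips and the holomorphic polygon, so that the area literally adds with no stray contributions; this is the polygon-analogue of the two-sided computation $\langle \partial\partial[p,w],[r,w\# B]\rangle$ displayed via $T^{\omega(B_1)}T^{\omega(B_2)}$ in Section~\ref{subsec:obstruction}, and it follows from the overlapping property \eqref{eq:ellij} of the base paths $\ell_{ij}$ together with admissibility. Once that identification is in hand the proof is a one-line inequality, and I would present it essentially as above. I do not expect any real obstacle here; the content is entirely in setting up the area bookkeeping, which the paper has already done in the construction of the abstract index $I_\omega$.
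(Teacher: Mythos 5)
Your proof is correct and is exactly the intended justification: the paper states Proposition \ref{filprod} without proof, and the argument it relies on is precisely your combination of area additivity under the admissible decomposition (the abstract index $I_\omega$), positivity $\omega(B)=\int u^*\omega\ge 0$ for the classes with nonempty moduli space, and $v(b_i)>0$ for the bounding cochain insertions. The only bookkeeping to spell out is the point you flagged: since $w^+_{k0}(s,t)=w^-_{k0}(1-s,1-t)$ is a double reversal, the output capping enters the additivity identity with a plus sign, yielding $\CA([p_{k0},w^+_{k0}])=\sum_{i=1}^{k}\CA([p_{i(i-1)},w^+_{i(i-1)}])+\omega(B)$, from which the filtration estimate follows at once.
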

Studying the behavior of filtration under the $A_\infty$ operations,
one can define (higher-order) spectral invariants of Lagrangian
Floer theory in a way similar to the one carried out in
\cite{oh:spectre}. Then Proposition \ref{filprod} implies a similar
estimate as Theorem I(4) \cite{oh:spectre}. This is a subject of
future study.
\section{Comparison between anchored and non-anchored versions}
\label{sec:relation}
The anchored Lagrangian Floer theory presented in this paper is somewhat different
from the one developed in \cite{fooo00, fooo06} (for one and two Lagrangian submanifolds)
\cite{Fuk02II} (for 3 or more Lagrangian submanifolds)
in several points. In this section we examine their relationship
and make some comments on some aspects of their applications.

We first point out the following obvious fact:
\begin{prop}
If $M$ is simply connected the anchored version of Floer homology is isomorphic
to non-anchored version together with all of its multiplicative structures.
\end{prop}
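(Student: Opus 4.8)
The plan is to trace through the constructions of Sections \ref{chaincomplex}--\ref{subsecprod} and observe that, once $\pi_1(M)=0$, the only feature that distinguishes them from the non-anchored constructions of \cite{fooo00,fooo06,Fuk02II} --- namely the restriction to \emph{admissible} intersection points and to \emph{admissible} homotopy classes --- becomes vacuous. After composing with the change-of-coefficient-ring map $[p,w]\mapsto T^{\int w^*\omega}e^{\mu(w)/2}\langle p\rangle$ of the Remark following Theorem \ref{anchoredAinfty}, the anchored and non-anchored chain complexes, boundary maps, bimodule operations, and higher products $\frak m_k$ are then defined by counting the same moduli spaces with the same orientations, so the identity map on generators will be the desired isomorphism of $A_\infty$ structures.

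The geometric input is connectedness of path spaces. For connected Lagrangians $L_i,L_j$, the evaluation map
$$
ev:\Omega(L_i,L_j)\to L_i\times L_j,\qquad \ell\mapsto(\ell(0),\ell(1)),
$$
is a fibration whose fibre over $(x,y)$ is the space $P_{x,y}(M)$ of paths in $M$ from $x$ to $y$. Since $M$ is path-connected, $P_{x,y}(M)$ is homotopy equivalent to the based loop space $\Omega_x(M)$, so $\pi_0(P_{x,y}(M))\cong\pi_1(M,x)=0$ and the fibre is connected; the homotopy exact sequence of the fibration then shows that $\pi_0$ of $\Omega(L_i,L_j)$ maps bijectively to $\pi_0(L_i)\times\pi_0(L_j)$, so $\Omega(L_i,L_j)$ is connected. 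The same argument, applied to the space of paths from the base point $y$ to $L$, shows that the space of anchors of $L$ is connected; hence any two anchors of $L$ are homotopic, a graded anchor always exists, and none of the anchored structures depends, up to canonical isomorphism, on the chosen (graded) anchors.

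It remains to check that admissibility is then automatic. Every $p\in L_i\cap L_j$ is admissible with respect to $((L_i,\gamma_i),(L_j,\gamma_j))$: the constant path at $p$ and the base path $\ell_{ij}$ of \eqref{ellij} lie in the unique component of $\Omega(L_i,L_j)$, so $\pi_2(\ell_{ij};p)\ne\emptyset$. For $k=1$, Lemma \ref{admissibilityfor2} then gives $\pi_2^{ad}(\CE;(p,q))=\pi_2(\CE;(p,q))$ for all $p,q$. For a chain $\CE$ of length $k+1\ge 3$ and any $B\in\pi_2(\frak L;\vec p)$, pick for each $i$ an anchored bounding strip $w^-_{i(i+1)}$ satisfying \eqref{wanch9or} --- possible since each $p_{(i+1)i}$ is admissible --- and let $B_0$ be the admissible class obtained by gluing them. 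A straightforward obstruction argument, using $\pi_1(M)=0$ to absorb the difference $B-B_0$ into sphere bubbles and discs attached along the $L_i$ (each of which can be glued into one of the strips $w^-_{i(i+1)}$), shows $B$ is admissible too; hence $\pi_2^{ad}(\CE;\vec p)=\pi_2(\CE;\vec p)$ for every anchored Lagrangian chain $\CE$.

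Assembling the pieces: the generators $[p,w]$ of $CF((L_1,\gamma_1),(L_0,\gamma_0))$ are then exactly those of the Novikov-covering model of $CF(L_1,L_0)$, the Novikov covering and its deck group $\Pi(L_0,L_1;\ell_{01})$ do not depend on the anchor, and in \eqref{eq:boundary}, in the operations $\frak n_{k_1,k_0}$, and in \eqref{mkcorrected} the summation sets $\pi_2(p,q)$ and $\pi_2^{ad}(\CE;\vec p)$ range over precisely the homotopy classes of the non-anchored theory; the moduli spaces $\CM(p,q;B)$ and $\CM(\frak L,\vec p;B)$ and the orientations of Theorems \ref{thm:fooo-ori} and \ref{thm:fooo-ori2} coincide in the two pictures. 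Hence, after the coefficient-ring change above, the identity on generators intertwines $\partial$, all the $\frak n$'s and all the $\frak m_k$'s, which proves the claim. The step that genuinely uses $\pi_1(M)=0$, and the one requiring the most care, is the vanishing of the obstruction to admissibility of \emph{homotopy classes} (not merely of intersection points), since for a general anchored chain $\pi_2^{ad}$ is a proper subset of $\pi_2$ (cf.\ the discussion after Definition \ref{classB}); the case of disconnected Lagrangians is handled componentwise, the non-anchored complex being the direct sum of the anchored ones over compatible systems of components.
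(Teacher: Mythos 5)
Your overall strategy is sound, and it is worth noting that the paper itself offers no argument at all here --- the proposition is introduced with ``We first point out the following obvious fact'' --- so your write-up is a legitimate fleshing-out of what the authors treat as evident: connectedness of $\Omega(L_i,L_j)$ and of the space of anchors when $\pi_1(M)=0$, hence vacuousness of admissibility, hence identical summation ranges, moduli spaces and orientations (the paper itself remarks that the orientation scheme is the same in both versions), followed by the change of coefficients $[p,w]\mapsto T^{\int w^*\omega}e^{\mu(w)/2}\langle p\rangle$.

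The one step you should tighten is the claim $\pi_2^{ad}(\CE;\vec p)=\pi_2(\CE;\vec p)$ for chains of length $\ge 3$. Your ``straightforward obstruction argument'' asserting that the difference $B-B_0$ can be absorbed into sphere bubbles and discs attached along the $L_i$ is not formal: for a general ambient manifold the difference of two polygon classes with the same corners is \emph{not} generated by $\pi_2(M)$ and $\pi_2(M,L_i)$ (the $T^2$ example of Proposition \ref{prodcomp}, where $\pi_2(M)=\pi_2(M,L_i)=0$ yet $\pi_2(\frak L;\vec p)\cong\Z$ with a single admissible class, shows exactly this), so the statement genuinely uses $\pi_1(M)=0$ and needs a proof, not an appeal to ``straightforward.'' A cleaner route avoids the obstruction analysis altogether: by Lemma \ref{existanchor2} every $B\in\pi_2(\frak L;\vec p)$ is admissible for \emph{some} choice of anchors; admissibility depends only on the homotopy classes of the anchors, because if $\gamma_i\simeq\gamma_i'$ via homotopies $h_i$ one replaces each bounding strip $w^-_{i(i+1)}$ by its concatenation with the band $\overline{h_i}*h_{i+1}$, and in the glued polygon the two copies of $h_i$ occurring in the adjacent strips cancel, so the glued class is unchanged; and when $\pi_1(M)=0$ and $L_i$ is connected the space of anchors of $L_i$ is connected (your own fibration argument), so any two anchors are homotopic. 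Combining the three statements gives $\pi_2^{ad}(\CE;\vec p)=\pi_2(\frak L;\vec p)$ for the \emph{given} anchors, which is what your assembly step requires. With that substitution your proof is complete.
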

\par
We also remark that the way how we treat the orientation for the anchored version
in Section \ref{sec:orient} is actually the same as the one used for the non-anchored version
in \cite{fooo06} and \cite{Fuk02II}.
\subsection{Examples}
\label{subsec:exa}
We start with simple examples that illustrate some difference
between the two.

We consider the symplectic manifold
$(T^2,dx\wedge dy)$, where $T^2 = \R^2/\Z^2$ and $(x,y)$ is the standard coordinate
of $\R^2$.
Let $L_0 = \{[x,0] \mid x \in \R \}$, $L_1 = \{ [x,3x] \mid x\in \R\}$.
$$
L_0 \cap L_1 = \{[0,0],[1/3,0],[2/3,0]\}.
$$
Let $[1/2,0]$ be the base point and take anchors
$$
\gamma_0(t) = [(1-t)/2,0],
\quad
\gamma_1^0(t) = [(1-t)/2,0],
$$
of $L_0$ and $L_1$ respectively.
It is easy to see that $[0,0]$ is $((L_0,\gamma_0),(L_1,\gamma_1^0))$ admissible.
It is also easy to see by drawing pictures that $[1/3,0]$, $[2/3,0]$
are not $((L_0,\gamma_0),(L_1,\gamma_1^0))$ admissible.
The set of homotopy classes of anchors of $L_1$ is identified with $\Z$ and
$[k/3,0]$ is $((L_0,\gamma_0),(L_1,\gamma_1^{\ell}))$ admissible if $k\equiv \ell
\mod 3$.
Here anchor $\gamma_1^i$ is a concatenation of $\gamma_1^0$ and
$
t \mapsto [it/3,0]
$.
We also remark that $\Pi(L_0), \Pi(L_1), \Pi(L_0,L_1)$ are trivial.
Therefore $\Lambda(L_0) = \Lambda(L_1) = \Lambda(L_0,L_1) = \Q$.
Thus we have
$$
HF((L_1,\gamma_1^i),(L_0,\gamma_0)) \cong \Q
$$
for any choice of anchor $\gamma_1^i$.
\begin{rem}
In this subsection, we always take $0$ as the bonding cochain $b$ and
we omit it from the notation of Floer cohomology.
\end{rem}

On the other hand we have
$$
HF(L_1,L_0;\Lambda_{nov}) \cong \Lambda_{nov}^{\oplus 3}.
$$
It is easy to see that
$
\pi_0(\Omega(L_0,L_1))
$
consists of $3$ elements, which we denote $\ell_{01}^i$ $(i=0,1,2)$.
Moreover
$
[\overline{\gamma_0}*\gamma_1^j] = \ell_{01}^i
$
with $i \equiv j \mod 3$.
\par
Hence we have the decomposition
$$
HF(L_1,L_0;\Lambda_{nov})
\cong \bigoplus_{i=0}^2 HF(L_1,L_0;\ell_{01}^i;\Lambda_{nov}).
$$
This is the decomposition given in Remark 3.7.46 \cite{fooo08}.
(We note that we have the isomorphism
$$
HF(L_1,L_0;\ell_{01}^i;\Lambda_{nov})
\cong
HF((L_1,\gamma_1^i),(L_0,\gamma_0)) \otimes \Lambda_{nov}.)
$$
\par\medskip
We next consider the same $T^2$ and
$$
L_0 = \{ [0,y] \mid y \in \R \}, \quad L_1 = \{[x,0] \mid x \in \R\}.
$$
Then $L_0 \cap L_1$ consists of one point $[0,0]$.
Therefore
$$
HF((L_1,\gamma_1),(L_0,\gamma_0)) \cong \Q
$$
for any anchor $\gamma_0$ and $\gamma_1$.
In fact $\Omega(L_0,L_1)$ is connected in this case.
We next consider the third Lagrangian submanifold
$
L_2 = \{ [x,-x] \mid x \in \R\}.
$
It is also easy to see that
$$
HF(L_2,L_i;\Lambda_{nov}) \cong HF((L_2,\gamma_2),(L_i,\gamma_i))
\otimes \Lambda_{nov} \cong \Lambda_{nov},
$$
for $i=0,1$ and any anchor $\gamma_j$ of $L_j$.
\par
We take $[0,0]$ as base point and take anchors
$$
\gamma_0^k(t) = [kt,0], \quad \gamma_1^k(t) = [0,kt], \quad \gamma_2^k(t) = [kt/2,kt/2].
$$
of $L_i$ for $i=0,1,2$.
For each $k,\ell \in \Z$ and $i,j$ ($i,j \in \{0,1,2\}$, $i \ne j$) we have
$HF((L_i,\gamma_i^k),(L_j,\gamma_j^\ell)) \cong \Q$.
Let $x_{ij}^{k\ell} = [[0,0],w_{ij;k\ell}]$ be its canonical
generator.
Here $[w_{ij;k\ell}]$ represents the unique element of $\pi_2(\overline{\gamma_i^k}*\gamma_{j}^{\ell},[0,0])$.
\par
Let $x_{ij}=\langle[0,0]\rangle$ be also the canonical generator of the (non-anchored) Floer
homology $HF(L_i,L_j)$. (We refer readers to Section \ref{redbynonanchor} of
present paper and Subsection 5.1.3 \cite{fooo08} for the definition of $\langle[0,0]\rangle$.)
\par
Now the product $\frak m_2$ is described as follows:
\begin{prop}\label{prodcomp}
In the case of non-anchored version we have
\begin{equation}\label{theta}
\frak m_2(x_{21},x_{10}) = \left(\sum_{k\in \Z} T^{k^2/2} \right) x_{20}.
\end{equation}
In the anchored version we have
\begin{equation}\label{oneterm}
\frak m_2(x_{21}^{m\ell},x_{10}^{\ell k})
=
x_{20}^{mk}.
\end{equation}
\end{prop}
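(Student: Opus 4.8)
The plan is to make the almost complex structure completely explicit and reduce both identities to the classification of affine triangles in $\R^2$.

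\emph{Step 1: the triangle moduli space.} I would take for $J$ the flat complex structure induced by $z=x+\sqrt{-1}\,y$, which is compatible with $dx\wedge dy$ and makes $L_0,L_1,L_2$ totally geodesic. Since $\pi_2(T^2)=0$ and $\pi_2(T^2,L_i)=0$, there are no sphere or disc bubbles, and any $J$-holomorphic triangle with boundary on $L_0\cup L_1\cup L_2$ lifts, through the covering $\pi\colon\R^2\to T^2$, to an orientation-preserving conformal map of $D^2$ onto a convex affine polygon; having exactly three corners it is (Schwarz--Christoffel) the conformal map onto an affine triangle $\Delta\subset\R^2$ whose edges are parallel to $\{x=0\}$, $\{y=0\}$, $\{x+y=0\}$ (the directions of $L_0,L_1,L_2$) and whose vertices lie in $\pi^{-1}([0,0])=\Z^2$. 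A direct check shows that up to $\Z^2$-translation $\Delta$ has vertices $(0,0),(a,0),(0,a)$ for a unique $a\in\Z$ (with $a=0$ the constant map to $[0,0]$), has symplectic area $a^2/2$, and, for the cyclic placement of the three marked points dictated by the $\frak m_2$-convention, carries the disc orientation. The linearized $\delbar$-operator along such a map is a Cauchy--Riemann operator on a trivial bundle with locally constant Lagrangian boundary data, hence surjective; so $\CM(\frak L,\vec p)$, with $\frak L=(L_0,L_1,L_2)$ and $\vec p$ the triple $([0,0],[0,0],[0,0])$, is already transverse, equals $\bigsqcup_{a\in\Z}\{\mathrm{pt}\}$, and each point is rigid (consistent with the dimension formula of Theorem~\ref{58.21}).

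\emph{Step 2: (\ref{theta}).} Summing $T^{\text{area}}$ over $\CM(\frak L,\vec p)$ gives $\frak m_2(x_{21},x_{10})=\sum_{a\in\Z}\epsilon_a\,T^{a^2/2}\,x_{20}$, with $\epsilon_a=\pm1$ the sign attached by the orientation scheme of Section~\ref{sec:orient}; over $\Z_2$ this is already $\bigl(\sum_a T^{a^2/2}\bigr)x_{20}$. To pin down the signs I would run the comparison of Section~\ref{sec:orient}: all Maslov classes vanish ($c_1(T^2)=0$, the $L_i$ linear), the pair-orientations $o_{[0,0]}$ are fixed once and for all, and each affine triangle is a boundary-oriented point in a trivially framed index bundle; this forces $\epsilon_a=+1$ for all $a$ and yields (\ref{theta}).

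\emph{Step 3: (\ref{oneterm}).} Now I would track the capping data through the universal cover. Lift the anchors to paths in $\R^2$ from the origin: $\gamma_0^k$ ends at $(k,0)\in\{x=k\}$, $\gamma_1^\ell$ at $(0,\ell)\in\{y=\ell\}$, and $\gamma_2^m$ at $(m/2,m/2)\in\{x+y=m\}$, lying on lifted components of $L_0$, $L_1$, $L_2$ respectively. A capped generator $[\,[0,0],w\,]$ determines a lift of its corner to $\Z^2$, namely the location of the constant edge of the lifted capping strip, whose two remaining sides are forced onto the lifted Lagrangians issuing from the lifted anchor endpoints. Carrying this out, the corners of $x_{10}^{\ell k}$, $x_{21}^{m\ell}$, $x_{20}^{mk}$ lift to $(k,\ell)$, $(m-\ell,\ell)$, $(k,m-k)$; these are precisely the three vertices of one member of the Step~1 family, since the edge joining the first two lies on $\{y=\ell\}$, that joining the second and third on $\{x+y=m\}$, and that joining the third and first on $\{x=k\}$ --- exactly the edge--Lagrangian labels required by $\frak m_2$; it is correctly oriented (the relevant signed area is $(m-k-\ell)^2\ge 0$, matching the $\frak m_2$-convention), and its three bounding strips are the halves cut out by the anchors, so it is $\CE$-admissible in the sense of Definition~\ref{classB}. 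By formula (\ref{catAinifwob}) only this triangle contributes to the coefficient of $x_{20}^{mk}$; it is rigid, carries the sign $+1$ of Step~2, and --- as $\Lambda(L_i,L_j)=\Q$ for all the pairs here, so no $T$-weight decorates the anchored $\frak m_2$ --- contributes exactly $x_{20}^{mk}$. This is (\ref{oneterm}).

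\emph{The hard part.} I expect the real obstacle to be the two global inputs to Step~1 rather than the lattice bookkeeping of Step~3: showing that \emph{every} $J$-holomorphic triangle is affine (so that $\CM(\frak L,\vec p)$ is exactly $\bigsqcup_{a\in\Z}\{\mathrm{pt}\}$, with automatic regularity), and, in tandem, checking that the orientation recipe of Section~\ref{sec:orient} assigns the sign $+1$ to every triangle in the family, including the degenerate constant one appearing at $a=0$ (equivalently when $m=k+\ell$).
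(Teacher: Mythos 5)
Your proposal is correct and follows essentially the same route as the paper: both pass to the universal cover $\R^2\to T^2$, identify the holomorphic triangles with the affine triangles of area $a^2/2$ bounded by the three families of lifted lines (giving (\ref{theta})), and then lift the anchors so that the boundary is pinned onto the specific lines $x=k$, $y=\ell$, $x+y=m$, whence exactly one admissible class/triangle with vertices $(k,\ell),(m-\ell,\ell),(k,m-k)$ survives (giving (\ref{oneterm})). You spell out regularity, signs and the classification of holomorphic triangles in more detail than the paper's terse remarks, but the underlying mechanism is the same.
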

\begin{proof}
We first remark that
$\pi_2((L_0,L_1,L_2),(p_{02},p_{21},p_{10})) \cong \Z$.
Moreover each of the homotopy class is realized by
holomorphic disc uniquely. This implies (\ref{theta}).
\par
To prove (\ref{oneterm}).
it suffices to see that for each
$\gamma_0^k$, $\gamma_1^{\ell}$, $\gamma_2^m$ the set
\begin{equation}\label{3tuadmissible}
\pi^{adm}_2((L_0,\gamma_0^k),(L_1,\gamma_2^{\ell}),(L_2,\gamma_2^{m})),(p_{02},p_{21},p_{10}))
\end{equation}
of admissible class consists of one element.
We will prove it below.
\par
Let $B$ be an element of (\ref{3tuadmissible}).
We write
$
B = [w^-_{01}] \# [w^-_{12}] \# [w^-_{20}]
$
as in Definition \ref{classB}.
Let $\R^2 \to T^2$ be the universal covering.
We lift anchors $\gamma_0^k$, $\gamma_1^{\ell}$, $\gamma_2^m$
to $\widetilde\gamma_0^k$, $\widetilde\gamma_1^{\ell}$, $\widetilde\gamma_2^m$
such that
$\widetilde\gamma_0^k(0) = \widetilde\gamma_1^{\ell}(0) = \widetilde\gamma_2^m(0) = 0$.
\par
We then lift $w_{01}$ such that (a part of) its boundary is $\widetilde\gamma_0^k$
and $\widetilde\gamma_1^{\ell}$. We lift $w_{12}$ and $w_{20}$ in a similar
way. We thus obtain a lift $\widetilde w$ of $w$.
It is easy to see that the boundary of $\widetilde w(D^2)$ is contained in
$\widetilde L_0^k \cup \widetilde L_1^{\ell} \cup \widetilde L_2^m$ where
$$
\widetilde L_0^{k} = \{ (k,y) \mid y \in \R\}, \quad
\widetilde L_1^{\ell} = \{ (x,\ell) \mid x \in \R\}, \quad
\widetilde L_2^{m} = \{ (x,m-x) \mid x\in \R\}.
$$
Thus the admissible homotopy class of $B$ is unique.
\end{proof}

\begin{rem}
We remark that (\ref{theta}) is the formula appearing in Kontsevich \cite{konts:hms}
where the homological mirror symmetry proposal first appeared.
So it seems that the anchored version is not suitable for the
application to mirror symmetry, when $M$ is not simply connected.
On the other hand, the anchored version is more closely related to
the variational theoretical origin of Floer homology and so
seems more suitable to study spectral invariant for example.
\end{rem}

The above proof also implies the following:
\begin{lem}
If $B$ is $(L_0,\gamma_0^k),(L_1,\gamma_2^{\ell}),(L_2,\gamma_2^{m}))$
admissible then
\begin{equation}
B\cap \omega = \frac{(m-k-\ell)^2}{2}.
\end{equation}
\end{lem}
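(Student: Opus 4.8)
The plan is to reuse the lifting argument already carried out in the proof of Proposition \ref{prodcomp}. Given an admissible class $B$, represent it by a polygon $w : D^2 \to T^2$ obtained by gluing bounding strips $w^-_{01}, w^-_{12}, w^-_{20}$ as in Definition \ref{classB}, and lift it to $\widetilde w : D^2 \to \R^2$ (possible since $D^2$ is simply connected) so that the lift is compatible with the fixed lifts $\widetilde\gamma_0^k, \widetilde\gamma_1^{\ell}, \widetilde\gamma_2^m$ of the anchors, all emanating from $0$. As recorded there, $\widetilde w$ maps $\partial D^2$ onto a curve running along the three affine lines
$$
\widetilde L_0^{k} = \{(k,y) \mid y\in\R\}, \qquad \widetilde L_1^{\ell} = \{(x,\ell) \mid x\in\R\}, \qquad \widetilde L_2^{m} = \{(x,m-x) \mid x\in\R\},
$$
meeting them at the three pairwise intersection points.

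Next I would compute those corners by solving the three pairs of linear equations: $\widetilde L_0^k\cap \widetilde L_1^\ell = (k,\ell)$, $\widetilde L_1^\ell\cap\widetilde L_2^m = (m-\ell,\ell)$, and $\widetilde L_2^m\cap\widetilde L_0^k = (k,m-k)$. Since the admissible homotopy class is unique (again by the proof of Proposition \ref{prodcomp}), I may take $\widetilde w$ to restrict on $\partial D^2$ to the boundary of the affine triangle $\Delta$ with these three vertices, traversed once. Because $dx\wedge dy = d(x\,dy)$ is exact on $\R^2$, Stokes' theorem gives
$$
B\cap\omega = \int_{D^2}\widetilde w^*(dx\wedge dy) = \int_{\partial D^2}\widetilde w^*(x\,dy),
$$
which is the signed Euclidean area enclosed by $\partial\Delta$; with the standard orientation this equals $\operatorname{Area}(\Delta)$.

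Finally, $\Delta$ is a right triangle with the right angle at $(k,\ell)$: the leg to $(m-\ell,\ell)$ is horizontal of length $\abs{m-k-\ell}$, and the leg to $(k,m-k)$ is vertical of the same length $\abs{m-k-\ell}$. Hence $\operatorname{Area}(\Delta)=\frac12(m-k-\ell)^2$, which is the assertion. The one step that needs a little care — and which I expect to be the only real obstacle — is justifying that the symplectic area is computed by this straight triangle: this combines the uniqueness of the admissible class from Proposition \ref{prodcomp} with the exactness of $dx\wedge dy$ on the universal cover, but one should check the orientation conventions so that the sign comes out $+$ rather than $-$ (it does, by the shoelace computation, independently of the sign of $m-k-\ell$).
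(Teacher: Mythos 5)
Your proof is correct and is essentially the paper's own argument: the paper disposes of this lemma by the single remark that the lifting argument in the proof of Proposition \ref{prodcomp} (the lift of $\partial(\widetilde w(D^2))$ lies on $\widetilde L_0^{k}\cup\widetilde L_1^{\ell}\cup\widetilde L_2^{m}$) implies it, and you reuse exactly that lift, merely supplying the explicit corner points and the elementary area computation via Stokes. Your closing sign remark is also sound, since replacing $m-k-\ell$ by its negative rotates the lifted triangle by $180^\circ$ about $(k,\ell)$ and hence preserves the signed area, while positivity is also forced by the holomorphic representative of the unique admissible class.
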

\begin{rem}
In the case of Lemma \ref{prodcomp}
we obtain the non-anchored version by summing up
anchored versions appropriately. In general the non-anchored version is
an appropriate sum of anchored versions.
However the way of summing up anchored versions to obtain the
non-anchored one does not look so simple to describe.
\end{rem}

\subsection{Relationship with the grading of Lagrangian submanifolds.}
\label{subsec:reldeg}
In \cite{Fuk02II} the first named author followed the method of
Seidel \cite{seidel:grading} (and Kontsevich) to define a grading of
Floer cohomology. In this section we discuss its relation to the
formulation of Section \ref{sec:grading}.
\par
We first briefly recall the notion of gradings in the sense of
\cite{seidel:grading}. Consider the tangent space $T_pM$ and let $Lag^+(T_pM)$
be the set of oriented Lagrangian subspaces.
The union $Lag^+(M): = \cup_{p \in M} Lag^+(T_pM)$ forms a
fiber bundle over $M$. If $L$ is an oriented Lagrangian submanifold,
the Gauss map $p \mapsto T_pL$ provides a canonical section
of the restriction $Lag^+(M)|_L \to L$. We denote the canonical section
by $\overline s_L$.
\par
We first consider the case $(M,\omega)$ with $c^1(M) = 0$.
\par
The fundamental group of $Lag^+(T_pM)$ is $\Z$.
The condition $c^1(M) = 0$ is equivalent to the
condition that there exists a (global) $\Z$ fold
covering $\widetilde{Lag}(M)$ of ${Lag}^+(M)$, which restricts to
the universal covering on each fiber ${Lag}^+(T_pM)$.
(See Lemma 2.6 \cite{Fuk02II}.)
\par
The section $s_L$ lifts to a section $\widetilde s$ of
$\widetilde{Lag}(M)|_L$ if and only if the Maslov class $\mu_L \in H^1(L;\Z)$ of $L$ is zero.
(Recall if $c^1(M) = 0$, then the Maslov class $\mu_L$ is well-defined.)
For each Lagrangian submanifold $L$ with $\mu_L = 0$
a lift $\widetilde s$ of $s_L$ is said to be a {\it grading} of $L$.
The pair $(L,\widetilde s)$ of Lagrangian submanifold $L$ and
grading $\widetilde s$ is called a {\it graded
Lagrangian submanifold}.
\par
Let $(L_i,\widetilde s_i)$ be a graded Lagrangian submanifold.
Then for $p \in L_0 \cap L_1$ we consider any
path $\widetilde{\lambda}$ from $\widetilde s_0(p)$ to $\widetilde s_1(p)$
in $\widetilde{Lag}(T_pM)$ and denote its projection to $Lag(T_pM)$
by $\lambda$. Then we compute the intersection number of
$\lambda$ with the Maslov cycle $Lag_1(T_pM;T_pL_0)$
(relative to $T_pL_0$) to define a degree
$\deg p \in \Z$ for each $p \in L_0 \cap L_1$.
This definition is independent of the choice of $\widetilde{\lambda}$ with
$\widetilde{\lambda}(0) = \widetilde s_0(p)$, $\widetilde{\lambda}(1) = \widetilde s_1(p)$.
(See \cite{seidel:grading}, \cite{Fuk02II} for the details.)
\par
Now we explain how the grading $\lambda$ of $(L,\gamma)$ and
the grading $\widetilde s$ of $L$ are related to each other.
For this purpose, we fix, once and for all, an element $\widetilde V_y$ of $\widetilde{Lag}(T_yM)$
which projects to $V_y \in {Lag}(T_yM)$ at the base point $y$
in Definition \ref{ancgrade}.
\par
First, we go from $\widetilde s$ to $\lambda$.
We consider any anchored Lagrangian submanifold $(L,\gamma)$
with $\mu_L = 0$. Let $\widetilde s$ be a grading of $L$.
We take a section of $\widetilde{\lambda}_{i}$ of the pull-back
$\gamma_{i}^*(\widetilde{Lag}(TM)) \to [0,1]$ such that
$$
\widetilde{\lambda}_{i}(0) = \widetilde V_y, \quad
\widetilde{\lambda}_{i}(1) = s_i(\lambda_i(1)).
$$
Such path is unique up to homotopy because $[0,1]$ is
contractible and so $\gamma_{i}^*\widetilde{Lag}(TM)$ is simply connected.
We push it out and obtain a section $\lambda$ in $\gamma^*Lag(TM)$.
In this way, a graded Lagrangian submanifold $(L,\widetilde s)$
canonically determines a grading $\lambda$ of an anchored Lagrangian submanifold
$(L,\gamma)$. Namely $(\gamma,\lambda)$ becomes a graded anchor
of $L$ in the sense of Definition \ref{ancgrade}.
\par
We remark that the path $\lambda_{01}$ induced by
these graded anchors lifts to $\widetilde{\lambda}_{01}$
joining $s_0(\ell_{01}(0))$ to $s_1(\ell_{01}(1))$.
\par
We then define
$\mu([p,w])$ using this path $\lambda_{01}$ as in Section \ref{sec:grading}.
\begin{lem}\label{degcomp}
$\mu([p,w])$ is independent of $w$. Moreover we have
\begin{equation}\label{eqdegcomp}
\mu([p,w]) = \deg(p).
\end{equation}
\end{lem}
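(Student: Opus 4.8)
The plan is to establish first that $\mu([p,w])$ does not depend on $w$, and then to identify the resulting invariant with $\deg(p)$ by a linear‑symplectic computation in $(T_pM,\omega_p)$.

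\emph{Independence of $w$.} Any two representatives $w,w'$ of elements of $\pi_2(\ell_{01};p)$ differ by the insertion of a loop $g$ in the path space $\Omega(L_0,L_1;\ell_{01})$ based at $\ell_{01}$. Since the Maslov--Morse index of Definition \ref{MMindex2} is by construction the Maslov index of a Lagrangian loop along $\partial[0,1]^2$, inserting $g$ merely concatenates onto that loop the Lagrangian loop attached to $g$ as in Definition \ref{novcov} (with $\lambda_{01}$ as reference path), so
\[
\mu([p,w'];\lambda_{01}) - \mu([p,w];\lambda_{01}) = \mu(g).
\]
By \cite{fooo00}, $\mu(g)$ is the index of a bundle pair over the annulus, and the homomorphism $g\mapsto \mu(g)$ on $\pi_1(\Omega(L_0,L_1;\ell_{01}))$ is determined by $c^1(M)$ together with the Maslov classes $\mu_{L_0},\mu_{L_1}$. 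All three vanish here, so $\mu(g)=0$ for every $g$, and $\mu([p,w];\lambda_{01})$ depends only on $p$ and the graded anchors.

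\emph{Comparison with $\deg(p)$.} Fix any $w$. By Definition \ref{MMindex2}, $\mu([p,w];\lambda_{01})$ is the Maslov index, computed in a trivialization of $w^*TM$, of the Lagrangian loop $\alpha^{\Phi}_{[p,w];\lambda_{01}}$ running around $\partial[0,1]^2$ with boundary data $\lambda_{01}$ on $\{0\}\times[0,1]$, the Gauss sections $T_{w(s,0)}L_0$ and $T_{w(s,1)}L_1$ on the bottom and top edges, and the canonical path $\alpha_p^+$ of Proposition \ref{2.1.3} on $\{1\}\times[0,1]$. Because $c^1(M)=0$ we may pass to the $\Z$‑cover $\widetilde{Lag}(M)\to Lag^+(M)$. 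By construction of the graded anchors, and the remark preceding the lemma, $\lambda_{01}$ lifts to a path $\widetilde\lambda_{01}$ from $\widetilde s_0(\ell_{01}(0))$ to $\widetilde s_1(\ell_{01}(1))$; the Gauss sections on the bottom and top lift to the gradings $\widetilde s_0$ of $L_0$ and $\widetilde s_1$ of $L_1$; and these three lifts agree at the two left corners. Consequently $\mu([p,w];\lambda_{01})$ equals the deck transformation measuring the failure of the lift of $\alpha_p^+$ started at $\widetilde s_0(p)$ to end at $\widetilde s_1(p)$. By the definition of the degree following Seidel \cite{seidel:grading} (cf. \cite{Fuk02II}), this same integer is $\deg(p)$: it is the intersection number with the Maslov cycle $Lag_1(T_pM;T_pL_0)$ of the projection to $Lag(T_pM)$ of any path from $\widetilde s_0(p)$ to $\widetilde s_1(p)$ in $\widetilde{Lag}(T_pM)$, the boundary intersection at $T_pL_0$ being counted with the ``positively directed'' convention of Definition \ref{defn:+directed} — and by Proposition \ref{2.1.3} the path $\alpha_p^+$ is exactly the one normalizing this boundary behaviour. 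Together with the first part this gives $\mu([p,w])=\deg(p)$ for every $w$.

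\emph{Main obstacle.} The only delicate point is the bookkeeping of signs and of the ``half‑integer'' convention at $T_pL_0$: one must verify that the normalization built into $\alpha_p^+$ (Proposition \ref{2.1.3}) and Definition \ref{defn:+directed} matches Seidel's normalization of $\deg(p)$ exactly, and not merely up to an additive constant. I would handle this by reducing, via the lifting argument above, to a purely linear‑symplectic computation in $(T_pM,\omega_p)$ and comparing it term by term with the definitions in \cite{seidel:grading} and \cite{Fuk02II}.
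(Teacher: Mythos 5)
Your argument is correct and follows essentially the same route as the paper, whose own proof is only a two-line sketch: independence of $w$ from the vanishing of the Maslov classes of $L_0,L_1$ (well defined since $c_1(M)=0$), and the identity $\mu([p,w])=\deg(p)$ by directly comparing Definition \ref{MMindex2} with Seidel's definition of the degree via the lift to $\widetilde{Lag}$. You merely supply the details the paper omits (the loop-insertion computation and the lifting argument), and the sign/normalization check you flag at the end is exactly the part the paper also leaves to the reader.
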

\begin{proof}
Independence of the degree of $w$ is a consequence of our
assumption that Maslov index of $L_0, L_1$ are zero.
Then the equality (\ref{eqdegcomp}) follows easily by
comparing the definitions.
We omit the detail.
\end{proof}
Thus the degree of \cite{Fuk02II} and of this paper coincides under the assumption
that is Maslov index is $0$.
\par
Now we go from $\lambda$ to $\widetilde s$. For any given grading $\lambda$
of $(L,\gamma)$, we lift $\lambda$ to a section of $\gamma^*\widetilde{Lag}(TM)$
so that $\widetilde\lambda(0) = \widetilde V_y$. Then $\widetilde \lambda(1)$
is a lifting of $\lambda(1)$ in $\widetilde{Lag}(T_{\gamma(1)}M$. Since the lifting
of $\widetilde \lambda$ of $\lambda$ is homotopically unique,
$\widetilde \lambda(1)$ depends only on $(L,\gamma)$ and the fixed $\widetilde V_y$.
Therefore if $\mu_L =0$, then this determines a unique grading $\widetilde s$
of $L$ with $\widetilde s(\gamma(1)) =\widetilde \lambda(1)$.
\par
The above discussion can be generalized to the case of $\Z_{2N}$-grading
where the Maslov index is divisible by $2N$ for some positive integer
$N$ rather than being zero. We leave this discussion to the readers.

\section{Reduction of the coefficient ring and Galois symmetry}
\label{sec:Galois}
In this section, we study a reduction of the coefficient ring $\Lambda_{nov}$ to
the subring $\Lambda_{nov}^{\text{\rm rat}}$ or to the ring $\Q[[T^{1/N}]][T^{-1}]$.

\begin{defn}
We put
$$
\Lambda_{nov}^{\text{\rm rat}}
= \left\{ \sum_{i=1}^{\infty} T^{\lambda_i}e^{\mu_i/2}a_i \in \Lambda_{nov}\,
\Big| \, \lambda_i \in \Q \right\}
$$
We also define $\Lambda_{0,nov}^{\text{\rm rat}}$ in a similar way.
\end{defn}
\par
This problem was studied by the first named author in
\cite{fukaya:Galois} in relation to the Galois symmetry of Floer
cohomology over rational symplectic manifolds. Theorem 2.4 in
\cite{fukaya:Galois} is Theorem \ref{GStheorem} of the present
paper. Its proof was given in \cite{fukaya:Galois} as far as $\frak
m_k$ ($k=0,1$) concerns. The case for $k\geq 2$ was `left to the
reader' in \cite{fukaya:Galois}. In this section we give the detail
of the discussion for the case $k\geq 2$.
\subsection{Rational versus BS-rational Lagrangian submanifolds}
In this subsection, we first clarify somewhat confusing usages of
the terminology `rational' Lagrangian submanifolds in the literature
(e.g. in \cite{oh:cyclic}, \cite{fukaya:Galois} etc.).
\par
First we assume that there exists an integer $m_{\text{\rm amb}}$ with $m_{\text{\rm amb}}\omega \in H^2(M;\Z)$,
i.e., $(M,m_{\text{\rm amb}}\omega)$ is integral or pre-quantizable.
Then we choose a complex line bundle $\mathcal P$ with a unitary connection $\nabla$
such that its curvature $F_\nabla$ satisfies
\begin{equation}\label{eq:Fnabla}
F_\nabla = 2\pi \sqrt{-1} m_{\text{\rm amb}}\omega.
\end{equation}
The pair $(\mathcal P,\nabla)$ is called a {\it pre-quantum bundle} of
$(M,m_{\text{\rm amb}}\omega)$.
We note that the connection is flat on any Lagrangian submanifold by
\eqref{eq:Fnabla}.
\begin{defn} We say that a Lagrangian submanifold $L$ is
\emph{Bohr-Sommerfeld $m$-rational} or simply BS \emph{$m$-rational} if
the image of the holonomy group $(\mathcal P|_L,\nabla|_L)$ is contained
in $\{\exp(2\pi\sqrt{-1}km_{\text{amb}}/m) \mid k \in \Z\}$.
We say $L$ is \emph{Bohr-Sommerfeld rational} or simply {\it BS-rational}
if it is BS-rational for some $m$.
We denote the smallest such integer by $m_L$.
\par
When $(\mathcal P,\nabla)$ is trivial on $L$ and $m_{\text{amb}}=1$,
we call $L$ a \emph{Bohr-Sommerfeld orbit}.
\end{defn}
By definition, it is easy to see
$
m_{\text{amb}} \vert m_L
$
for any BS-rational Lagrangian submanifold $L$.
\begin{rem} In \cite{oh:cyclic} and \cite{fukaya:Galois},
the corresponding notion is called \emph{cyclic} and just \emph{rational}
respectively. In this paper, we adopt the name Bohr-Sommerfeld rational
which properly reflects the kind of rationality of
holonomy group of the quantum line bundle $(\mathcal P, \nabla)$
relative to the Lagrangian submanifold.
\end{rem}
A Lagrangian submanifold is often called (spherically) rational in
literature when $\{\omega(\pi_2(M,L))\} \subset \R$ is discrete.
This is related to but not exactly the same as the BS-rationality
in the above definition.
\begin{defn} Let $(M,\omega)$ be rational.
We say that a Lagrangian submanifold is \emph{rational} if
$\Gamma_\omega(L):=\{\omega(\alpha) \mid \alpha \in \pi_2(M,L)\} \subset \R$ is discrete.
\end{defn}
The following lemma shows the relationship between the BS rationality and the rationality.
\begin{lem}\label{lem:cyclic-rational} If $L$ is BS $m$-rational,
then $L$ is rational. Moreover $\Gamma_\omega(L) \subseteq \{\exp(2\pi\sqrt{-1}km_{\text{\rm amb}}/m) \mid k \in \Z\}$.
\end{lem}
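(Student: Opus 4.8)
The plan is to deduce everything from the curvature–holonomy relation for the pre-quantum bundle. Fix $\alpha \in \pi_2(M,L)$ and represent it by a smooth map $u : (D^2,\del D^2) \to (M,L)$, so that $\omega(\alpha) = \int_{D^2} u^*\omega$. First I would pull back the pre-quantum data to the disc, forming $(u^*\mathcal P, u^*\nabla) \to D^2$; its curvature is $u^*F_\nabla = 2\pi\sqrt{-1}\,m_{\text{amb}}\,u^*\omega$ by \eqref{eq:Fnabla}. Since $D^2$ is contractible, $u^*\mathcal P$ is trivial and $u^*\nabla = d + A$ for a global connection $1$-form $A$ with $dA = u^*F_\nabla$; hence Stokes' theorem gives that the holonomy of $u^*\nabla$ around the boundary loop $\del D^2$ equals
$$
\exp\!\Big(-\!\int_{D^2} u^*F_\nabla\Big) \;=\; \exp\!\big(-2\pi\sqrt{-1}\,m_{\text{amb}}\,\omega(\alpha)\big).
$$

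Next I would identify this with a holonomy computed inside $L$: the restriction of $u^*\nabla$ to $\del D^2 \cong S^1$ is the pullback under $u|_{\del D^2}$ of $\nabla|_L$, which is flat because $\omega|_L = 0$; by naturality of holonomy under pullback, the left-hand side above also equals $\operatorname{hol}_{\nabla|_L}(u|_{\del D^2})$. Since $L$ is BS $m$-rational this last quantity lies in $\{\exp(2\pi\sqrt{-1}\,k\,m_{\text{amb}}/m)\mid k\in\Z\}$. Equating, we obtain $\exp(-2\pi\sqrt{-1}\,m_{\text{amb}}\,\omega(\alpha)) = \exp(2\pi\sqrt{-1}\,k\,m_{\text{amb}}/m)$ for some $k\in\Z$, i.e. $m_{\text{amb}}\,\omega(\alpha) \in \tfrac{m_{\text{amb}}}{m}\Z + \Z$. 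Using $m_{\text{amb}}\mid m$ (so that $\tfrac{m_{\text{amb}}}{m}\Z + \Z = \tfrac{m_{\text{amb}}}{m}\Z$) and dividing by $m_{\text{amb}}$ yields $\omega(\alpha) \in \tfrac1m\Z$. As $\alpha$ was arbitrary, $\Gamma_\omega(L) \subseteq \tfrac1m\Z$, which is discrete; this simultaneously establishes that $L$ is rational and the quantitative inclusion asserted in the Moreover clause.

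The only delicate point — and the main thing I would be careful about — is the bookkeeping of sign and orientation conventions in the curvature–holonomy formula, together with the claim that there is no residual $\Z$-ambiguity in the holonomy of $u^*\nabla$ around $\del D^2$. The latter holds precisely because $\del D^2$ bounds the disc $D^2$ over which the curvature is integrated, so the holonomy is pinned down exactly by $\int_{D^2}u^*F_\nabla$ with no homotopy choice involved; and the sign is immaterial here, since the target set $\{\exp(2\pi\sqrt{-1}\,k\,m_{\text{amb}}/m)\}$ is invariant under inversion. No input beyond \eqref{eq:Fnabla}, the Lagrangian condition $\omega|_L=0$, and the definition of BS $m$-rationality is needed.
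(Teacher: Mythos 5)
Your argument is correct, and in fact the paper states Lemma \ref{lem:cyclic-rational} without proof, so there is nothing to compare it against except the surrounding text; your curvature--holonomy computation is exactly the mechanism the authors do spell out later in the proof of Proposition \ref{prop:E'} (flat sections of $\mathcal P^{\otimes N/m_{\text{\rm amb}}}$ over the boundary, curvature $2N\pi\omega$, Stokes), so yours is clearly the intended route. The chain ``trivialize $u^*\mathcal P$ over $D^2$, write $u^*\nabla = d+A$, Stokes gives $\operatorname{hol}_{\nabla|_L}(u|_{\partial D^2}) = \exp(-2\pi\sqrt{-1}\,m_{\text{\rm amb}}\,\omega(\alpha))$, compare with the BS $m$-rationality constraint'' is sound, and your remark that the sign ambiguity is harmless because the target set is a group (closed under inversion) is the right disposal of the convention issue.

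Two small caveats, neither of which is a genuine gap. First, the ``Moreover'' clause as printed is a type mismatch ($\Gamma_\omega(L)\subset\R$ versus a set of unit complex numbers); your reading of it as the quantitative statement $\exp(2\pi\sqrt{-1}\,m_{\text{\rm amb}}\,\omega(\alpha))\in\{\exp(2\pi\sqrt{-1}\,k\,m_{\text{\rm amb}}/m)\}$, i.e.\ $\omega(\alpha)\in\frac{1}{m}\Z+\frac{1}{m_{\text{\rm amb}}}\Z$, is the only sensible one, and your computation proves it. Second, the divisibility $m_{\text{\rm amb}}\mid m$ that you invoke to collapse this to $\frac1m\Z$ is not among the hypotheses of the lemma (the paper's assertion that $m_{\text{\rm amb}}\mid m_L$ is itself only justified under its later convention, as in the $N$-rational setting where $m_{\text{\rm amb}}\mid N$ is imposed); without it you still get $\Gamma_\omega(L)\subseteq\frac{1}{\operatorname{lcm}(m,m_{\text{\rm amb}})}\Z$, which is discrete, so the rationality conclusion is untouched and only the exact lattice changes. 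It would be cleaner to state the conclusion in that form and note that it refines to $\frac1m\Z$ when $m_{\text{\rm amb}}\mid m$.
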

The converse does not hold in general when $L$ is not simply connected.
In fact for the case $M = (T^2,dx\wedge dy)$ and $L_t = \{ [t,y] \mid y \in \R\}$,
(Here we regard $T^2 = \R^2/\Z^2$.)
every $L_t$ is rational but only countably many of $L_t$'s
are BS-rational. (The question on which $L_t$ becomes BS-rational depends on the
choice of pre-quantum bundle $(\mathcal P,\nabla)$.
It is easy to see that we may choose the
pre-quantum bundle so that $L_t$ is BS-rational if and only if $t \in \Q$.)
\par
Using Lemma \ref{lem:cyclic-rational}, it is easy to show that the
coefficient ring of the filtered $A_\infty$ algebra $C(L;\Lambda_{0,nov})$ can be
reduced to the ring $\Q[[T^{1/N}]][e,e^{-1}] \subset
\Lambda^{\text{\rm rat}}_{0,nov}$.
In particular, if we define
\begin{equation}
C(L;\Lambda_{0,nov}^{\text{\rm rat}})
= C(L;\Q)\,\, \widehat{\otimes}_{\Q} \,\,\Lambda_{0,nov}^{\text{\rm rat}}
\subset C(L;\Lambda_{0,nov})
\end{equation}
the operations $\frak m_k$ induce a filtered $A_{\infty}$ structure on
$C(L;\Lambda_{0,nov}^{\text{\rm rat}})$.
\subsection{Reduction of the coefficient ring: non-anchored version}
\label{redbynonanchor}
In this subsection, we explain the way to reduce the coefficient ring
of the filtered $A_{\infty}$ category associated
to a symplectic manifold to the ring $\Lambda_{0,nov}^{\text{\rm rat}}$.
\par
Let $(M,\omega)$ be a symplectic manifold with
$m_{\text{\rm amb}}\omega \in H^2(M;\Z)$.
We fix a prequantum bundle $(\mathcal P,\nabla)$ of $(M,m_{\text{\rm amb}}\omega)$.
\par
We fix any integer $N \in \Z_+$
and consider the set of BS $N$-rational Lagrangian submanifolds .
\begin{defn}
The $N$-{\it rationalization} of $L$ (in $(M,\omega)$,
$(\mathcal P,\nabla)$) is a global section $S_L$
of $\mathcal P^{\otimes N/m_{\text{\rm amb}}}$ such that
$$
\Vert S_L\Vert \equiv 1, \quad \nabla^{\otimes N/m_{\text{\rm amb}}} S_L = 0.
$$
\end{defn}
The following lemma is easy to show.
\begin{lem}\label{BSrational}
$L$ is BS $N$-rational if and only if it has an $N$-rationalization.
\end{lem}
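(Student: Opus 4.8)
The plan is to recognize both conditions in the lemma as two faces of a single fact about the flat Hermitian line bundle $(\mathcal P^{\otimes N/m_{\text{amb}}}|_L,\nabla^{\otimes N/m_{\text{amb}}})$, and then to invoke the classical dictionary between flat unitary line bundles on a connected space and their holonomy representations $\pi_1(L)\to U(1)$. Throughout one uses, tacitly as in the rest of this section, that $m_{\text{amb}}\mid N$, so that $N/m_{\text{amb}}$ is a positive integer and $\mathcal P^{\otimes N/m_{\text{amb}}}$ makes sense.

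First I would record the flatness. Since $L$ is Lagrangian we have $\omega|_L=0$, so by \eqref{eq:Fnabla} the curvature of $\nabla|_L$ vanishes and $(\mathcal P|_L,\nabla|_L)$ is a flat unitary line bundle; hence so is the tensor power $(\mathcal P^{\otimes N/m_{\text{amb}}}|_L,\nabla^{\otimes N/m_{\text{amb}}})$, and for every loop $\gamma$ in $L$ its holonomy is $\rho_\gamma^{N/m_{\text{amb}}}$, where $\rho_\gamma\in U(1)$ is the holonomy of $\nabla|_L$ around $\gamma$. Next I would unwind the definition of BS $N$-rationality: it asserts $\rho_\gamma\in\{\exp(2\pi\sqrt{-1}km_{\text{amb}}/N)\mid k\in\Z\}$ for every $\gamma$, and an element of $U(1)$ lies in that finite cyclic subgroup exactly when its $(N/m_{\text{amb}})$-th power is $1$; so BS $N$-rationality is equivalent to $\rho_\gamma^{N/m_{\text{amb}}}=1$ for all $\gamma$, i.e.\ to triviality of the holonomy of $(\mathcal P^{\otimes N/m_{\text{amb}}}|_L,\nabla^{\otimes N/m_{\text{amb}}})$.

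Finally I would close the argument with the standard fact that, on a connected manifold, a flat unitary line bundle has trivial holonomy if and only if it admits a global parallel section: trivial holonomy yields a nowhere-vanishing parallel section, which has constant norm because $\nabla$ is metric-compatible, so after rescaling one obtains a parallel section $S_L$ with $\Vert S_L\Vert\equiv 1$, that is, an $N$-rationalization; conversely an $N$-rationalization is precisely a global parallel unit section, and parallel transport along any loop fixes it, forcing the holonomy to be trivial. If $L$ is disconnected one simply applies this componentwise. The step that needs the most care — though it is still entirely routine — is the last one, identifying ``trivial holonomy'' with ``admits a parallel unit section''; beyond that there is no genuine obstacle, as the lemma is essentially the definition of $N$-rationalization read through the holonomy representation.
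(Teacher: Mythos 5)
Your proof is correct, and since the paper omits the argument entirely (it only declares the lemma ``easy to show''), what you wrote is exactly the intended standard reasoning: $\omega|_L=0$ makes $(\mathcal P^{\otimes N/m_{\text{amb}}}|_L,\nabla^{\otimes N/m_{\text{amb}}})$ flat, BS $N$-rationality is equivalent (using $m_{\text{amb}}\mid N$) to triviality of its holonomy, and trivial holonomy of a flat unitary line bundle is equivalent to the existence of a global parallel unit-norm section, i.e.\ an $N$-rationalization. No gaps; the componentwise remark for disconnected $L$ and the use of metric-compatibility for $\Vert S_L\Vert\equiv 1$ are the right points of care.
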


Let $L_i$, $i=0, \, 1$ be a pair of $N$ BS-rational Lagrangian submanifolds.
Let $S_{L_i}$ be $N$-rationalizations $L_i$.
For each $p \in L_0 \cap L_1$,
we define $c(p)$ to be the smallest nonnegative real number such that
\begin{equation}\label{defcp}
\exp(2\pi N c(p) \sqrt{-1}/m_{\text{\rm amb}}) S_{L_0}(p) = S_{L_1}(p).
\end{equation}

\begin{prop}\label{prop:E'} Let $\operatorname{length}(\frak L) = k+1 \geq 2$. Define
\begin{equation}\label{eq:E'}
E'(B): = \int_B\omega - \sum_{i=0}^k c(p_{(i+1)i})
\end{equation}
for $B \in \pi_2(\frak L;\vec p)$.
Then $E'$ has values in $\Z[1/N]$ and satisfies the gluing rule
$E'(B) = E'(B_1) + E'(B_2)$ whenever $B = B_1\# B_2$ in the sense of
Lemma \ref{lem:split}.
\end{prop}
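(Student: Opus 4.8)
The plan is to interpret $\exp\!\big(2\pi\sqrt{-1}\,N\,E'(B)\big)$ as a holonomy of the pre-quantum bundle and to compute it in two ways. First I would fix a smooth representative $v : D^2 \to M$ of the class $B$, so that $v(\overline{z_{(i+1)i}z_{i(i-1)}}) \subset L_i$ and $v(z_{i(i-1)}) = p_{i(i-1)}$, and pull back the Hermitian line bundle with connection $(\mathcal P^{\otimes N/m_{\text{\rm amb}}},\nabla^{\otimes N/m_{\text{\rm amb}}})$ along $v$. By \eqref{eq:Fnabla} its curvature is $\tfrac{N}{m_{\text{\rm amb}}}F_\nabla = 2\pi\sqrt{-1}\,N\,\omega$; since a bundle over the contractible disc $D^2$ is trivial, we may write $v^*\nabla^{\otimes N/m_{\text{\rm amb}}} = d + A$ with $dA = v^*(2\pi\sqrt{-1}N\omega)$, and then Stokes' theorem gives that the holonomy around $\partial D^2$ equals $\exp\!\big(-\oint_{\partial D^2}A\big) = \exp\!\big(-2\pi\sqrt{-1}\,N\!\int_B\omega\big)$.

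On the other hand, \eqref{eq:Fnabla} forces $\nabla^{\otimes N/m_{\text{\rm amb}}}$ to be flat on each $L_i$, and by definition the $N$-rationalization $S_{L_i}$ is a parallel unit section of $(\mathcal P^{\otimes N/m_{\text{\rm amb}}})|_{L_i}$; hence $v^*S_{L_i}$ is parallel along the boundary arc $\overline{z_{(i+1)i}z_{i(i-1)}}$. Recomputing the same holonomy by transporting a reference vector in the fiber over one of the corners once around $\partial D^2$, but now reading it off in the piecewise parallel frames given by the $v^*S_{L_i}$, the transport along each arc is trivial, so the whole monodromy is the ordered product of the $k+1$ jumps $S_{L_i}(p_{(i+1)i})\mapsto S_{L_{i+1}}(p_{(i+1)i})$ at the corners, each of which is the scalar prescribed by \eqref{defcp}. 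Equating this product with the curvature computation above, one reads off that $N\,E'(B)\in\Z$, hence $E'(B)\in\tfrac{1}{N}\Z\subset\Z[1/N]$.

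For the gluing rule, let $B = B_1\#B_2$ be a splitting of the type produced by Lemma \ref{lem:split}, with new vertex $p_{ij}=p_{ji}\in L_i\cap L_j$. As in the proof of Lemma \ref{lem:split}, one may represent $B_1$ and $B_2$ by polygons obtained from a representative of $B$ by inserting constant strips at $p_{ij}$, so that $\int_B\omega = \int_{B_1}\omega+\int_{B_2}\omega$ by additivity of the symplectic area. Comparing the marked-point lists $\vec p$, $\vec p\,'$, $\vec p^{\,\prime\prime}$, the corners of $B_1$ and $B_2$ together are, as a multiset, all the corners of $B$ together with the glued vertex, which appears once in $B_1$ as the transition $L_i\to L_j$ and once in $B_2$ as the transition $L_j\to L_i$. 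By \eqref{defcp} the corner scalars attached to these two transitions are mutually inverse, so the corresponding $c$-values cancel; hence the corner sums of $B_1$ and of $B_2$ add up to that of $B$, and $E'(B)=E'(B_1)+E'(B_2)$.

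The one genuinely delicate point is the second holonomy computation: one must keep careful track of the orientation of $\partial D^2$, of which parallel section $v^*S_{L_i}$ is used on which boundary arc, and of the resulting ordered product of the scalars of \eqref{defcp}, so that it assembles to exactly $\exp\!\big(2\pi\sqrt{-1}\,N\,E'(B)\big)$ with the correct sign; the other ingredients---homotopy invariance and finiteness of $\int_B\omega$, additivity of area under gluing, and Stokes' theorem---are routine. I would also note that the argument uses the Lagrangians $L_i$ only through their $N$-rationalizations $S_{L_i}$, not through the anchors, and that it goes through verbatim in the bigon case $k=1$, where Lemma \ref{lem:split} is not needed for the rationality statement.
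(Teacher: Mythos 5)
Your rationality argument is the same as the paper's own proof: there, too, one pulls $\mathcal P^{\otimes N/m_{\text{\rm amb}}}$ back over a representative of $B$, builds a section over $\partial D^2$ that is parallel along each boundary arc using the rationalizations $S_{L_i}$ and jumps at the corners by the scalars of (\ref{defcp}), and compares the resulting total phase with the boundary holonomy computed from the curvature $2\pi\sqrt{-1}N\omega$ via Stokes. So for the statement $E'(B)\in\Z[1/N]$ there is nothing to add (the stray normalization factor $m_{\text{\rm amb}}$ in (\ref{defcp}) creates the same bookkeeping wrinkle in the paper's own proof, so I do not count it against you).

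The divergence is in the gluing rule, which the paper dismisses as immediate from (\ref{eq:E'}) and which you try to justify by cancellation of the two corner constants at the glued point $p_{ij}$. That cancellation does not follow from (\ref{defcp}) as stated. The two corner scalars are indeed mutually inverse, but (\ref{defcp}) takes $c$ to be the \emph{smallest nonnegative} solution, so the constant attached to the transition $L_i\to L_j$ at $p_{ij}$ (appearing in $E'(B_1)$ through $\vec p\,'$) and the one attached to $L_j\to L_i$ (appearing in $E'(B_2)$ through $\vec p^{\,\prime\prime}$) each lie in $[0,m_{\text{\rm amb}}/N)$, and inverseness of the scalars only forces their sum to lie in $\frac{m_{\text{\rm amb}}}{N}\Z$: the sum is $0$ exactly when $S_{L_i}(p_{ij})=S_{L_j}(p_{ij})$, and equals $m_{\text{\rm amb}}/N$ otherwise. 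Hence your computation gives $E'(B)-E'(B_1)-E'(B_2)\in\{0,\,m_{\text{\rm amb}}/N\}$, not the exact identity. To close this you must either adopt the convention that the constant attached to the reversed ordered pair is $-c(p)$ (i.e.\ work with the signed phase rather than its minimal nonnegative representative), or normalize so that the two rationalizations agree at the glued corner, or settle for additivity modulo $\frac{1}{N}\Z$ — which is all that is used in the application (rationality of the exponents in Proposition \ref{rationality}), but is weaker than the stated gluing rule. To be fair, the paper's ``obvious from the definition'' elides exactly the same point; the rest of your gluing discussion (area additivity under insertion of constant strips, the corner bookkeeping between $\vec p$, $\vec p\,'$, $\vec p^{\,\prime\prime}$) and your closing remarks that anchors play no role here and that $k=1$ is included are fine.
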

\begin{proof}
Let $w\in C^\infty(\dot
D^2,\frak L;\vec p)$ be a map such that $[w] = B$. We consider the pull back bundle
$w^* \mathcal P^{\otimes N/m_{amb}}$.
Let $\gamma : [0,1] \to \partial D^2$ be the map $t \mapsto e^{2\pi\sqrt{-1} t}$.
Using $S_{L_{i}}$ we can construct a section $s$
on $\gamma^*w^* \mathcal P^{\otimes N/m_{\text{\rm amb}}}$ such that
$$\nabla s = 0, \qquad
s(1) = \exp(2\pi N c(p) \sqrt{-1}/m_{\text{\rm amb}}) s(0).
$$
Using the fact that the curvature of $\mathcal P^{\otimes N/m_{\text{\rm amb}}}$ is
$2N\pi\omega$, we conclude $E'(B) \in \Z[1/N]$. The gluing rule for $E'$ is obvious
from its definition \eqref{eq:E'}.
\end{proof}
We put
\beastar
[[ p ]]_{\ell_{01}} & = & T^{- \int w^*\omega + c(p)}[p,w] \\
&{}& \quad \in CF(L_1,L_0;\ell_{01}) \otimes_{\Lambda(L_0,L_1;\ell_{01})} \Lambda_{nov}
\subset C(L_1,L_0;\Lambda_{nov}).
\eeastar
\begin{lem}\label{independetofell01}
If we change the choice of the base point $\ell_{01}$ then there exists
$k \in \Z$ such that
$
[[p]]_{\ell_{01}} = e^k [[p]]_{\ell'_{01}}
$
where $e$ is the formal parameter encoding the degree.
\end{lem}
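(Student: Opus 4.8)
The plan is to expand both $[[p]]_{\ell_{01}}$ and $[[p]]_{\ell'_{01}}$ inside the universal complex $C(L_1,L_0;\Lambda_{nov})$ and to compare them termwise. The assertion is vacuous unless both sides are defined, i.e.\ unless $p$ is admissible for both pairs; by the characterization recorded just before Lemma~\ref{admissibilityfor2} ($p$ is admissible iff $\pi_2(\ell_{01};p)\neq\emptyset$), this forces $\ell_{01}$ and $\ell'_{01}$ into the same connected component of $\Omega(L_0,L_1)$, the one containing $\widehat p$. Fix cappings $w\in\pi_2(\ell_{01};p)$ and $w'\in\pi_2(\ell'_{01};p)$ and apply the embedding $[p,u]\mapsto T^{\int u^*\omega}e^{\mu([p,u];\lambda_{01})/2}\langle p\rangle$ used to define $[[p]]$: the area weight cancels against the prefactor $T^{-\int w^*\omega}$, so that
\[
[[p]]_{\ell_{01}}=T^{c(p)}\,e^{\mu([p,w];\lambda_{01})/2}\,\langle p\rangle,\qquad
[[p]]_{\ell'_{01}}=T^{c(p)}\,e^{\mu([p,w'];\lambda'_{01})/2}\,\langle p\rangle .
\]
Here $c(p)$ is read off from the rationalizations $S_{L_0},S_{L_1}$ and from $p$ alone, and $\langle p\rangle$ is the generator of the non-anchored complex $C(L_1,L_0;\Lambda_{nov})$ attached to $p$ (fixed once and for all), so the two $T$-exponents already coincide and $c(p)$ merely rides along. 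Hence $[[p]]_{\ell_{01}}=e^{k}[[p]]_{\ell'_{01}}$ with $2k=\mu([p,w];\lambda_{01})-\mu([p,w'];\lambda'_{01})$, and the whole content of the lemma is that this difference of Maslov--Morse indices is \emph{even}.

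To prove the parity I would bridge the two cappings through the base paths. Pick a path $w_0$ in $\Omega(L_0,L_1)$ from $\ell_{01}$ to $\ell'_{01}$ together with a grading $\lambda_0$ along $w_0$ joining $\lambda_{01}$ to $\lambda'_{01}$ (any other grading of $\ell'_{01}$ differs from the one transported along $w_0$ by a loop in $Lag^+$, which changes all indices below by an even amount, so this is harmless). Then $w_0\#w'$ is a capping of $p$ relative to $\ell_{01}$. Using that $\mu([p,-];-)$ is the index of the operator $\delbar_{([p,-];-)}$ (Proposition~\ref{prop:delbarindex}) and that such indices add under gluing along a strip end, together with the vanishing of the index of the operator on the strip determined by $(w_0,\lambda_0)$---its boundary loop of Lagrangian subspaces bounds in $Lag^+$ through the section $\lambda_0$, hence has Maslov index $0$---one gets $\mu([p,w_0\#w'];\lambda_{01})=\mu([p,w'];\lambda'_{01})$. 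On the other hand $[p,w]$ and $[p,w_0\#w']$ are basis elements of the same complex $CF(L_1,L_0;\ell_{01})$ lying in one orbit of $\Pi(L_0,L_1;\ell_{01})$, so $[p,w_0\#w']=[g]\cdot[p,w]$ for a unique deck transformation $g$, whence $\mu([p,w_0\#w'];\lambda_{01})-\mu([p,w];\lambda_{01})=\mu(g)$. Combining the two identities, $\mu([p,w];\lambda_{01})-\mu([p,w'];\lambda'_{01})=-\mu(g)$, so it remains to check $\mu(g)\in 2\Z$. Now $g$ is represented by a map $v:[0,1]^2\to M$ with $v(0,\cdot)=v(1,\cdot)=\ell_{01}$, $v(s,0)\in L_0$, $v(s,1)\in L_1$, and $\mu(g)$ is the Maslov index of the associated loop $\alpha_{\lambda_{01};\lambda_{01}}$ of \eqref{condw3}; since $L_0$ and $L_1$ are oriented and $\lambda_{01}$ takes values in $Lag^+$, this loop lifts to $Lag^+(\R^{2n})$ after a trivialization of $v^*TM$, and every loop in $Lag^+$ has even Maslov index because the covering $Lag^+\to Lag$ is multiplication by $2$ on $\pi_1$. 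Hence $\mu(g)$ is even, and we may take $k=-\mu(g)/2$.

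The genuinely delicate points all lie in the index bookkeeping of the second paragraph: verifying that the two vertical sides of the strip piece for $(w_0,\lambda_0)$ cancel so that its index is zero, and making precise that the embeddings of $CF(L_1,L_0;\ell_{01})$ and of $CF(L_1,L_0;\ell'_{01})$ into $C(L_1,L_0;\Lambda_{nov})$ use literally the same generator $\langle p\rangle$, so that the comparison is legitimate. Both are routine given Section~\ref{sec:grading} and the construction of Subsection~\ref{redbynonanchor}, but they are where care is needed; everything else reduces to formal identities among Novikov exponents.
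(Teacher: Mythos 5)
The paper gives no argument for this lemma (it is declared ``easy and left to the reader''), so there is no in-paper proof to compare with; judged on its own, your argument is correct and supplies exactly the missing content. You rightly reduce the statement to a parity claim: after the $e$-weighted identification $[p,u]\mapsto T^{\int u^*\omega}e^{\mu([p,u];\lambda_{01})/2}\langle p\rangle$ of the remark following Theorem \ref{anchoredAinfty}, the $T$-exponents of $[[p]]_{\ell_{01}}$ and $[[p]]_{\ell'_{01}}$ both become $c(p)$, and the lemma amounts to $\mu([p,w];\lambda_{01})\equiv\mu([p,w'];\lambda'_{01}) \pmod 2$. Your three steps -- transporting the grading along a homotopy $w_0$ of base paths (index contribution zero because the boundary loop extends over the square through the section $\lambda_0$), comparing the cappings $w$ and $w_0\# w'$ over $\ell_{01}$ by a deck transformation $g$, and proving $\mu(g)\in 2\Z$ -- are all sound. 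Two remarks. First, for the evenness of $\mu(g)$ you only need orientability of $L_0,L_1$: any path in the Lagrangian Grassmannian lifts to $Lag^+$, and the two tangent-plane pieces of the loop \eqref{condw3} lift to closed paths because the $L_i$ are oriented, so the whole loop closes up in $Lag^+$; that $\lambda_{01}$ lies in $Lag^+$, or that its endpoint orientations agree with those of the $L_i$, is not actually needed there. The same observation shows that $[[p]]_{\ell_{01}}$ itself is well defined up to integer powers of $e$ under change of the capping $w$, a point the loose definition tacitly requires and which you could state explicitly. Second, the caveat you flag about both inclusions into $C(L_1,L_0;\Lambda_{nov})$ using literally the same generator $\langle p\rangle$ is real: the unweighted formula $\langle p\rangle=T^{-\int w^*\omega}[p,w]$ written in Subsection \ref{redbynonanchor} agrees with the $e$-weighted identification only in the graded setting of Theorem \ref{GStheorem}, where $\mu\equiv 0$ on $\Pi(L_0,L_1;\ell_{01})$; in that setting one can also conclude faster via Lemma \ref{degcomp}, since with the gradings induced from $\widetilde s_i$ both Maslov--Morse indices equal $\deg p$ and any other choice of grading path shifts them by even integers. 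Your argument has the advantage of not using $c_1(M)=0$ or vanishing Maslov classes at all, which matches the generality in which the lemma is stated.
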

The proof of the lemma is easy and left to the reader.
\par
This lemma together with the discussion on the degree in the last section shows
that the role of the choice of the base point $\ell_{01}$ is
to fix a connected component of $\Omega(L_0,L_1)$ and does not play
an essential role in Lagrangian Floer theory.
\begin{defn}
We consider the $\Lambda_{nov}^{\text{\rm rat}}$-submodule of $C(L_1,L_0;\Lambda_{nov})$
generated by $[[p]]$ ($p \in L_0 \cap L_1$) and denote it by
$C(L_1,L_0;\Lambda_{nov}^{\text{\rm rat}})$.
We define the module
$C(L_1,L_0,;\Q[[T^{1/N}]][T^{-1}][e,e^{-1}])$ in the same way.
\end{defn}
\begin{rem}
(1)
We note that the $\Lambda_{nov}^{\text{\rm rat}}$-sub-module
$C(L_1,L_0;\Lambda_{nov}^{\text{\rm rat}})$ of
$C(L_1,L_0;\Lambda_{nov})$ depends on the choice of
the rationalizations $S_{L_i}$. We omit them from notation however.
\par
(2)
In Subsection 5.1.3 \cite{fooo08} we put
$$
\langle p\rangle = T^{- \int w^*\omega}[p,w].
$$
Then $C(L_1,L_0;\Lambda_{0,nov})$
is a free $\Lambda_{0,nov}$ module over the basis
$\{ \langle p\rangle \mid p \in L_1 \cap L_0\}.$
\par
The difference of $\langle p\rangle$ from the present basis $[[p]]$ is
$T^{-c(p)}$. Namely
\begin{equation}\label{<>and[[]]}
\langle p\rangle = T^{-c(p)}[[p]]
\end{equation}
\par
$\langle p\rangle$ coincides with the
identity in $
Hom(\mathcal L_p,\mathcal L_p)
$
which appeared in
(2.30) \cite{Fuk02II} and is used there to construct
a filtered $A_{\infty}$ category.
\end{rem}
We now consider the operator
$$
\aligned
&\frak m_{k;m_0,\cdots,m_k}
:
B_{m_k}(C(L_k;\Lambda_{0,nov})) \otimes C(L_k,L_{k-1};\Lambda_{nov})
\otimes \cdots\\
&\quad\otimes C(L_{1},L_0;\Lambda_{nov})
\otimes B_{m_0}(C(L_0;\Lambda_{0,nov}))
\to C(L_{k},L_0;\Lambda_{nov})
\endaligned
$$
\begin{prop}\label{rationality}
The image of
$$
\aligned
&B_{m_k}(C(L_k;\Lambda_{0,nov}^{\text{\rm rat}})) \otimes C(L_k,L_{k-1};\Lambda_{nov}^{\text{\rm rat}})
\otimes \cdots\\
&\quad\otimes C(L_{1},L_0;\Lambda_{0,nov}^{\text{\rm rat}})
\otimes B_{m_0}(C(L_0;\Lambda_{0,nov}^{\text{\rm rat}}))
\endaligned
$$
by $\frak m_{k;m_0,\cdots,m_k}$
is in $C(L_{k},L_0;\Lambda_{nov}^{\text{\rm rat}})$.
\par
The same conclusion holds for $\Q[[T^{1/N}]][T^{-1}][e,e^{-1}]$.
\end{prop}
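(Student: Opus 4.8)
The plan is to track exponents of $T$ through the definition of $\frak m_{k;m_0,\cdots,m_k}$ and show that, after re-expressing everything in the rationalized bases $[[p]]$ and the $A_\infty$-algebra generators over $\Lambda_{0,nov}^{\text{\rm rat}}$, every output coefficient lies in $\Q$ (or in $\Z[1/N]$). First I would record the change-of-basis formula $\langle p\rangle = T^{-c(p)}[[p]]$ from \eqref{<>and[[]]} and recall that, by definition, $[[p]]_{\ell_{01}} = T^{-\int w^*\omega + c(p)}[p,w]$, so that a Floer trajectory $B$ contributing to $\frak m_k$ from $[[p_{k(k-1)}]],\dots,[[p_{10}]]$ to $[[p_{k0}]]$ carries the scalar $T^{E'(B)}e^{\mu(B)/2}$ times the count $\#(\CM_{k+1}(\frak L;\vec p;\vec{\vec P};B))$, where $E'(B) = \int_B\omega - \sum_{i=0}^k c(p_{(i+1)i})$ is exactly the quantity of Proposition \ref{prop:E'}. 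The point of introducing $E'$ is precisely that Proposition \ref{prop:E'} already tells us $E'(B) \in \Z[1/N]$, so the $T$-exponents coming from the ``Floer strip part'' of the moduli space are rational with bounded denominators.

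Next I would handle the $A_\infty$-algebra insertions $b_i^{\otimes m_i}$ and the disc bubble contributions. Each disc $\beta \in \pi_2(M,L_i)$ attached along $L_i$ contributes $T^{\omega(\beta)}e^{\mu(\beta)/2}$; since $L_i$ is BS $N$-rational, Lemma \ref{lem:cyclic-rational} gives $\omega(\beta) \in \frac{1}{N}\Z$ (more precisely $\Gamma_\omega(L_i) \subseteq \frac1N\Z$ after rescaling by $m_{\text{\rm amb}}$), so these exponents are again in $\Z[1/N] \subset \Q$. By the previous subsection, the operations $\frak m_k$ on $C(L_i;\Lambda_{0,nov})$ already restrict to $C(L_i;\Lambda_{0,nov}^{\text{\rm rat}})$; feeding in $b_i \in C(L_i;\Lambda_{0,nov}^{\text{\rm rat}})$ therefore keeps all bubble contributions rational. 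Combining this with the gluing decomposition of the boundary of $\CM_{m_0,\cdots,m_k}(\frak L,\vec p;\vec{\vec P};B)$ — whose strata are products of lower moduli spaces with $B = B_1\#B_2$ and $E'(B) = E'(B_1)+E'(B_2)$ by the gluing rule of Proposition \ref{prop:E'} — shows that the total exponent of $T$ in any term of $\frak m_{k;m_0,\cdots,m_k}$ evaluated on rationalized inputs is a finite sum of elements of $\Z[1/N]$, hence lies in $\Z[1/N]$. The rational coefficients $\#(\CM_{k+1}(\cdots))\in\Q$ (obtained from the virtual fundamental chain) pose no problem since the target ring contains $\Q$.

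The main obstacle I expect is not the counting of $T$-powers but bookkeeping the \emph{interaction} between the $c(p)$-shifts sitting in the definition of $[[p]]$ and the way the marked points $p_{(i+1)i}$ are glued when a Floer polygon degenerates as in Lemma \ref{lem:split}: one must check that the intermediate intersection point $p_{ij}$ produced in the splitting $B = B_1\#B_2$ contributes a shift $c(p_{ij})$ to \emph{both} $E'(B_1)$ and $E'(B_2)$ with opposite signs, so that it cancels and reproduces $E'(B)$ — this is the content of the gluing identity in Proposition \ref{prop:E'}, but applying it requires knowing that $B_1$ and $B_2$ are themselves admissible, which is exactly what Lemma \ref{lem:split} guarantees. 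Once this consistency is in place, the argument for the subring $\Q[[T^{1/N}]][T^{-1}][e,e^{-1}]$ is identical: the exponents live in $\frac1N\Z$ and are bounded below (by the energy/Gromov compactness built into the Kuranishi structures), the $e$-powers are integers by the grading formula of Section \ref{sec:grading}, and the infinite sums converge in the non-Archimedean topology, so the image lands in the claimed smaller ring. I would close by remarking that only finitely many $B$ contribute below any given energy level, which is what makes the resulting element a legitimate element of the Novikov-type ring rather than a formal divergent sum.
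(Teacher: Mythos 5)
Your proposal is correct and follows essentially the same route as the paper: rewrite the matrix coefficients in the basis $[[p]]$ via $\langle p\rangle = T^{-c(p)}[[p]]$, observe that each contribution carries $T^{E'(B)}e^{\mu(B)/2}$, and invoke Proposition \ref{prop:E'} to conclude $E'(B)\in\Z[1/N]$; the paper even treats only the case $m_0=\cdots=m_k=0$ ``for simplicity,'' so your explicit handling of the $b_i$-insertions via Lemma \ref{lem:cyclic-rational} and the rationality of the $A_\infty$ structure on $C(L_i;\Lambda_{0,nov}^{\text{\rm rat}})$ fills in what the paper omits. The discussion of the splitting $B=B_1\#B_2$ and Lemma \ref{lem:split} is not actually needed for this proposition (it matters for the compatibility of the $A_\infty$ relations with the reduction, not for the statement that each exponent is rational), but including it does no harm.
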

\begin{proof}
Let $B \in \pi_2(\frak L,\vec p)$
and $\CM(\frak L, \vec p;B)$ be as in Section \ref{chaincomplex}.
\par
For simplicity, we will prove the proposition for the case $m_0 =\cdots=m_k =0$.
Let $\langle p_{ij}\rangle = T^{-\int w_{ij}^*\omega} [p_{ij},w_{ij}]$.
By (\ref{catAinifwob}), we have
\begin{equation}\label{mkcoefficient}
\langle\frak m_k(\langle p_{k(k-1)}\rangle,\cdots,\langle p_{10}\rangle),\langle p_{k0}\rangle\rangle
= \sum_{B \in \pi_2(\mathfrak L,\vec p)} T^{B \cap \omega} e^{\mu(B)/2}
\#\CM(\frak L, \vec p;B).
\end{equation}
Here the left hand side denotes the $\langle p_{k0}\rangle$-coefficient of
$\frak m_k(\langle p_{k(k-1)}\rangle,\cdots,\langle p_{10}\rangle)$.
Therefore by (\ref{<>and[[]]}) we have the matrix coefficients
\begin{equation}\label{mkcoefficient2}
\aligned
&\langle\frak m_k([[p_{k(k-1)}]],\cdots,[[p_{10}]]),[[p_{k0}]]\rangle \\
&= \sum_{B \in \pi_2(\mathfrak L,\vec p)} T^{B \cap \omega - \sum_{i=0}^{k}c(p_{(i+1)i})} e^{\mu(B)/2}
\#\CM(\frak L, \vec p;B)
\endaligned
\end{equation}
for $k \geq 1$.
Since $B \cap \omega - \sum_{i=0}^{k}c(p_{(i+1)i})= E'(B)$ is rational by
Proposition \ref{prop:E'}, the right hand side of (\ref{mkcoefficient2}) lies
in $C(L_{k},L_0;\Lambda_{nov}^{\text{\rm rat}})$ as required.
\end{proof}
Now we are ready to wrap up the proof of Theorem \ref{GStheorem}.
By the assumption $c_1(M) = 0$ and vanishing of Maslov indices of
Lagrangian submanifolds, all Lagrangian submanifolds in the discussion
below carry a grading $\widetilde s$. We just denote $s$ for $\widetilde s$ below
to simplify the notation.
\par
For each given $N$, with $m_{\text{\rm amb}}\vert N$, we construct a filtered $A_{\infty}$ category
over $\Q[[T^{1/N}]][T^{-1}]$.
Its object is $(L, sp, b, s,S_L)$ where $L$ is a BS $N$-rational Lagrangian submanifold
$sp$ its spin structure, $s$ a grading, $b$ is a bounding cochain, and
$S_L$ is $N$-rationalization.
We assume that $b \in C^1(L;\Q[[T^{1/N}]])$.
\par
For two such objects we obtain a $\Q[[T^{1/N}]][T^{-1}]$ module
$$
C(L_1,L_0;\Q[[T^{1/N}]][T^{-1}]).
$$
By Proposition \ref{rationality}, the operation $\frak m_k^{\vec b}$
is defined over this $\Q[[T^{1/N}]][T^{-1}]$.
\par
We have thus obtained a filtered $A_{\infty}$ category
over $\Q[[T^{1/N}]][T^{-1}]$, which we denote by
$$
{\mathcal Fuk}_{N}(M,\omega).
$$
\par
To include all the BS-rational Lagrangian submanifolds and
obtain a filtered $A_{\infty}$ category
over $\Lambda_{nov}^{\text{\rm rat} (0)}$ we proceed as follows.
Let $L_0$, $L_1$ be Lagrangian submanifolds
which are $m_0$-BS rational and $m_1$-BS rational, respectively.
We take there $m_0$ (resp. $m_1$)
rationalization $S_{L_0}$ (resp. $S_{L_0}$).
Take any $N$ such that $ m_0, \, m_1 \mid N$. $S_{L_0}$ (resp. $S_{L_0}$)
induce an $N$ rationalization
Let $S_{L_0}^{N}$ (resp. $S_{L_1}^{N}$) in an obvious way.
(Namely $S_{L_0}^{N} = (S_{L_0})^{ \otimes N/m_0}$.)
\par
For $p \in L_0 \cap L_1$, we use (\ref{defcp}) to obtain $c(p)$.
To make $N$-dependence of $c(p)$ explicit, we write
$c_N(p)$ for $c(p)$. Then for each given $(N,N')$ with
$N | N'$, we have
$
N'c_{N}(p) - c_{N'}(p)=: \Delta(p) \in \Z_{\ge 0}.
$
We put
\begin{equation}
c_{N'}(p) = c_{N}(p) - \frac{\Delta(p)}{N'}.
\end{equation}
We write $[[p]]_{N}$ and $[[p]]_{N'}$ to distinguish the generators of
Floer chain complex over
$\Q[[T^{1/N}]][T^{-1}]$ and over $\Q[[T^{1/N'}]][T^{-1}]$. We consider the map
$$
[[p]]_{N} \mapsto T^{-1/\Delta(p)}[[p]]_{N'},
$$
that induces an isomorphism
$$\aligned
C(L_1,L_0;\Q[[T^{1/N}]][T^{-1}])
&\otimes_{\Q[[T^{1/N}]]} \Q[[T^{1/N'}]][T^{-1}] \\
&\longrightarrow
C(L_1,L_0;\Q[[T^{1/N'}]][T^{-1}])
\endaligned$$
which respect all the $A_{\infty}$ operations.
\par
Therefore the system $({\mathcal Fuk}_{N}(M,\omega); <)$ with
respect to the partial order `$N < N'$ if and only if $N \mid N'$'
forms an inductive system. We define the $A_\infty$-category
${\mathcal Fuk}_{\text{\rm rat}}(M,\omega)$ to be the associated
inductive limit.
\par
Now let $\widehat \Z$ be the profinite completion of $\Z$. As in \cite{fukaya:Galois}, we
will define an action of $\widehat \Z$ on ${\mathcal Fuk}_{\text{\rm rat}}(M,\omega)$.
To define a $\widehat \Z$ action we need to include a flat line bundle $\mathcal L$ over $L$
and take $R = \C$ in place of $R=\Q$.
Namely we take $(L,\mathcal L,sp,b,s,S_L)$
where $(L,sp,b,s,S_L)$ is as before and $\mathcal L$ is a flat $U(1)$ bundle
over $L$.
We say $\mathcal L$ is $N$-rational if
the image of the holonomy representation $\pi_1(L) \to U(1)$ is
contained in $\{ \exp(2\pi\sqrt{-1}k/N) \mid k \in \Z\}$.
\par
Now let $(L_i, \mathcal L_i,sp_i,b_i,s_i,S_{L_i})$ be as above
such that $\mathcal L_i$ are $N$-rational. We put
\begin{equation}
\aligned
C((L_1,\mathcal L_1),&(L_0,\mathcal L_0);\C[[T^{1/N}]][T^{-1}]) \\
&= \bigoplus_{p \in L_0\cap L_1} \Q[[T^{1/N}]][T^{-1}][[p]]
\otimes_{\Q} Hom_{\C}((\mathcal L_1)_p,(\mathcal L_0)_p).
\endaligned\end{equation}
We then modify operations $\frak m_k$ by using the holonomy of $\mathcal L_i$:
Namely we incorporate the holonomy weight in $U(1)$ as defined in (3.28)
\cite{Fuk02II} into the right hand side of (\ref{mkcoefficient2}).
Taking an inductive limit in the same way,
we obtain a filtered $A_{\infty}$ category over $\Lambda_{nov}^{\text{\rm rat}\,(0)\,\C}
=\Lambda_{nov}^{\text{\rm rat}\,(0)} \otimes_{\Q} \C$.
\par
The $m_{\text{\rm amb}}\widehat \Z$ action on it is defined as follows:
Let $m_{\text{\rm amb}} \in m_{\text{\rm amb}}\Z/(N\Z)$ be the standard generator.
We define an action on the set of object
by
$$
m_{\text{\rm amb}} \cdot (L,\mathcal L,sp,b,s,S_L)
= (L,\mathcal L \otimes \mathcal P_L,sp,b,s,S_L)
$$
Since $\mathcal P^{\otimes N/m_\text{\rm amb} }$ is a trivial bundle on $L$,
this defines an action of $m_\text{\rm amb}\Z/N\Z$.
\par
In the same way as \cite{fukaya:Galois} this induces an
action of $m_\text{\rm amb}\Z/N\Z$ on the category over
$\C[[T^{1/N}]][T^{-1}]$.
\begin{rem}
Note the (Galois) action of $1$ on $\C[[T^{1/N}]][T^{-1}]$
is
$$
T^{1/N} \mapsto \exp{(2\pi\sqrt{-1}/N)}T^{1/N}.
$$
This action is consistent with the above action, as was shown in \cite{fukaya:Galois}.
\end{rem}
We then take the inductive limit and obtain an action of $m_\text{\rm amb}\widehat \Z$
on the category over $\Lambda_{nov}^{\text{\rm rat}\,(0)\,\C}$.
The proof of Theorem \ref{GStheorem} is complete.
\qed
\begin{rem}
We may take the maximal abelian extension of $\Q$, that is
the field adding all the roots of unity to $\Q$, in place of $\C$.
\end{rem}
\begin{rem}
We use the coefficient ring $\Lambda_{nov}^{\text{\rm rat}\,(0)\,\C}$
which is the subring of $\Lambda_{nov}^{\text{\rm rat}\,\C}$ consisting of
the series not involving the grading parameter $e$.
This is because we include grading $s$ in the object of our category and so
the Floer cohomology has absolute $\Z$ grading such that
all the operations $\frak m_k$ is of degree $1$ (after shifted).
(We also choose the bounding cochain $b$ so that it is degree $1$.)
\par
We may consider the $\Z_N$-grading instead, then the category is defined over
$\Lambda_{nov}^{\text{\rm rat}\,(0)\,\C}[e^N,e^{-N}]$.
\par
If we take $(L,\mathcal L,sp,b,S_L)$ as an object (that is we do not
include grading at all) then the category is defined over $\Lambda_{nov}^{\text{\rm rat}\,\C}$
\par
We note that the $\widehat{\Z}$ action exists in all of these versions.
\end{rem}
\subsection{The reduction of coefficient ring: anchored version}
\label{subsec:seredanchor}

To see the relation between the
construction of the last subsection to the critical value, it is
useful to consider anchor. Let $y$ be the base point of $M$ we also
fix $V_y \in Lag(T_yM,\omega)$. We take and fix an element $S_y \in
V_y$ such that $\Vert S_y\Vert = 1$. Let $(L_i,\gamma_i)$ be
anchored Lagrangian submanifolds. We assume $L_i$ are $N$-rational.
Then it is easy to see that there exists a unique $N$-rationalization
$S_N$ such that $S_N(\gamma_i(1))$ is a parallel
transport of $S_y$ along $\gamma_i$. Using this rationalization we
discuss in the same way as the last subsection to obtain a non-anchored version:
More specifically, we have the following anchored version of
Proposition \ref{prop:E'}.

\begin{prop}\label{prop:anchoredE'} Let $\CE$ be an anchored Lagrangian chain
of length $\geq 2$. Define a map $E'_k: \pi_2^{ad}(\CE;\vec p) \to \R$
\begin{equation}\label{eq:E'}
E'_k(B): = \int_B\omega - \sum_{i=0}^k c(p_{(i+1)i})
\end{equation}
for $k \geq 2$, and
$$
E'_1(\alpha) = \int_\alpha \omega - c(p_{10})
$$
for $k=1$ and $\alpha \in \pi_2(\ell_{01};p_{01})$.
Then $E'_k$ have their values in $\Z[1/N]$ for all $k = 1, \cdots$ and the collection $E' = \{E'_\ell\}$
defines an abstract index on the collection of
$N$-rational anchored Lagrangian submanifolds.
\end{prop}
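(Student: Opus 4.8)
The plan is to deduce the Proposition from Proposition~\ref{prop:E'} together with the one feature special to the anchored setting, namely that the $N$-rationalization $S_N$ attached to $(L_i,\gamma_i)$ is normalized so that $S_N(\gamma_i(1))$ is the parallel transport of the fixed unit element $S_y\in V_y$ along $\gamma_i$ (the uniqueness recorded just before the Proposition). Two things must be shown: that each $E'_k$ takes values in $\Z[1/N]$, and that the family $E'=\{E'_\ell\}$ satisfies the gluing rule of Definition~\ref{abstractindex}. The gluing rule is essentially formal: if $B=[w^-_{01}]\#\cdots\#[w^-_{k0}]\in\pi_2^{ad}(\CE;\vec p)$ is admissible in the sense of Definition~\ref{classB}, then $\int_B\omega=\sum_{i=0}^k\int_{w^-_{i(i+1)}}\omega$ because symplectic area is additive under the polygon gluing (this is the abstract-index property of $I_\omega$ recorded above), hence
\[
E'_k(B)=\int_B\omega-\sum_{i=0}^kc(p_{(i+1)i})=\sum_{i=0}^k\Bigl(\int_{w^-_{i(i+1)}}\omega-c(p_{(i+1)i})\Bigr)=\sum_{i=0}^kE'_1([w^-_{i(i+1)}]),
\]
once one checks that the $c(p_{(i+1)i})$ on the left agrees with the one in $E'_1([w^-_{i(i+1)}])$. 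This is where the anchors do the work: by the cited uniqueness, $S_N$ on $L_i$ depends only on $(L_i,\gamma_i)$, so $c(p_{(i+1)i})$ defined through \eqref{defcp} depends only on the pair $((L_i,\gamma_i),(L_{i+1},\gamma_{i+1}))$ and not on the surrounding chain, while the overlapping property \eqref{eq:ellij} for the $\ell_{ij}$ guarantees these pairwise data are restrictions of one coherent choice. So the identity above is exactly the gluing rule.

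For integrality with $k\geq2$ there is nothing new: the argument of Proposition~\ref{prop:E'} applies verbatim, with $S_N|_{L_i}$ in the role of $S_{L_i}$. One picks $w\in C^\infty(\dot D^2,\frak L;\vec p)$ with $[w]=B$, pulls back $\mathcal P^{\otimes N/m_{\text{\rm amb}}}$, assembles a flat section of $w^*\mathcal P^{\otimes N/m_{\text{\rm amb}}}$ over $\partial D^2$ out of the flat sections $S_N|_{L_i}\circ w$ on the consecutive boundary arcs (with the jump at the marked point $z_{i(i-1)}$ given by the holonomy factor $\exp(2\pi Nc(p_{(i+1)i})\sqrt{-1}/m_{\text{\rm amb}})$ of \eqref{defcp}), and compares the resulting monodromy with the holonomy determined by the curvature $2\pi\sqrt{-1}N\omega$ of $\mathcal P^{\otimes N/m_{\text{\rm amb}}}$ — this uses \eqref{eq:Fnabla} — to conclude $E'_k(B)\in\Z[1/N]$.

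The genuinely new case is $k=1$: here $\alpha\in\pi_2(\ell_{01};p_{01})$ is represented by a strip $w\colon[0,1]^2\to M$ with $w(0,t)=\ell_{01}(t)$, $w(1,t)\equiv p$, $w(s,0)\in L_0$, $w(s,1)\in L_1$, and I would run the same monodromy computation around $\partial[0,1]^2$. On the sides $t=0$ and $t=1$ one propagates $S_N|_{L_0}\circ w$ and $S_N|_{L_1}\circ w$; the constant side $w(1,\cdot)\equiv p$ contributes the identity; and on the remaining side, carrying $\ell_{01}=\overline{\gamma_0}*\gamma_1$, parallel transport sends $S_N|_{L_0}(\ell_{01}(0))$ to $S_N|_{L_1}(\ell_{01}(1))$ precisely because both of these are parallel transports of the \emph{same} $S_y$ along $\gamma_0$, respectively $\gamma_1$ — this is the one step where the anchor-normalization of $S_N$ is indispensable. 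Thus only the corner at $p$ survives, contributing the factor of \eqref{defcp}, and comparison with the curvature-holonomy gives $E'_1(\alpha)\in\Z[1/N]$ as before. The main obstacle I anticipate is organizational rather than analytic: making the $k=1$ normalization (strips in $\pi_2(\ell_{01};p)$) and the $k\geq2$ normalization (polygons in $\pi_2^{ad}(\CE;\vec p)$) sit inside a single abstract index with consistent orientation conventions — the direction $\partial D^2$ is traversed, the clockwise placement of the $L_i$ against the counter-clockwise placement of the $z_{i(i-1)}$, and the gluing identification $w^+_{(i+1)i}(s,t)=w^-_{i(i+1)}(1-s,t)$ of \eqref{wplus} used to break a polygon into strips — and in verifying that the prescription ``$c(p)$ the smallest nonnegative real number'' in \eqref{defcp} pins down a definite representative so that $E'$ is an honest $\Z[1/N]$-valued function rather than merely well defined modulo $\Z$. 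With \eqref{eq:Fnabla} and Proposition~\ref{prop:E'} in hand, these reduce to short checks.
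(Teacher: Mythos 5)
Your proposal is correct and follows essentially the route the paper intends: the paper gives no separate argument for Proposition \ref{prop:anchoredE'}, saying only that one ``discusses in the same way as the last subsection,'' i.e.\ the monodromy-versus-curvature argument of Proposition \ref{prop:E'}, which is exactly what you run, together with the observation that the anchor-normalized rationalization makes $c(p)$ depend only on the pair and makes parallel transport along $\ell_{01}=\overline{\gamma_0}*\gamma_1$ match the two sections, so that the $k=1$ case and the gluing rule of Definition \ref{abstractindex} follow. Your spelled-out treatment of the $k=1$ strip and of the additivity check simply supplies details the paper leaves implicit.
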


The module of morphisms is
$$
CF((L_1,\gamma_1),sp,b_1,\lambda_{1}),((L_0,\gamma_0),sp,b_0,\lambda_{0}))
\otimes_{\Lambda(L_0,L_1;\ell_{01})} \Lambda_{nov}^{\text{\rm rat}}
$$
and higher products are defined in the same way as before.
To define a $\hat\Z$ action on the corresponding $A_\infty$ category,
we include flat $U(1)$ bundles with finite holonomy on $L_i$ as before.
\par
We remark that we obtain the same filtered $A_{\infty}$ category as
the non-anchored version when
$M$ is simply connected. However in general the two are different.


\begin{thebibliography}{FOOO00}
\bibitem[A]{arnold} Arnold, V., {\em On a characteristic class entering
in quantization conditions}, Funct. Anal. Appl. \textbf{1} (1967), 1-14.

\bibitem[Fl1]{floer:intersect} Floer, A., {\em Morse theory for Lagrangian
interesections}, J. Differ. Geom. \textbf{28} (1988), 513--547.

\bibitem[Fl2]{floer:index} Floer, A.,{\em A relative Morse index for the symplectic action},
Comm. Pure Appl. Math. \textbf{41} (1988), 393--407.

\bibitem[Fl3]{floer:action} Floer, A.,
{\em The unregularized gradient flow of the symplectic action}, Comm. Pure Appl. Math.
\textbf{41} (1988), 775--813.

\bibitem[Fu1]{Ainfold} Fukaya, K.,
{\em Morse homotopy, $A\sp \infty$-category, and Floer homologies},
Proceedings of GARC Workshop on Geometry and Topology '93, 1--102, Lecture
Notes Ser., \textbf{18}, Seoul Nat. Univ., Seoul, (1993).

\bibitem[Fu2]{Fuk02II} Fukaya, K., {\em Floer homology and mirror symmetry II},
Adv. Stud. Pure Math., \textbf{34}, 31--127, Math. Soc. Japan,
Tokyo, (2002).

\bibitem[Fu3]{fukaya:Galois} Fukaya, K., {\em Galois symmetry on Floer cohomology},
Turkish J. Math. \textbf{27},
11--32 (2003).

\bibitem[FOOO00]{fooo00} Fukaya, K., Oh, Y.-G., Ohta, H., Ono, K.,
Lagrangian Intersection Floer Theory - anomaly and obstruction,
preprint 2000.

\bibitem[FOOO06]{fooo06} Fukaya, K., Oh, Y.-G., Ohta, H., Ono, K.,
Lagrangian Intersection Floer Theory - anomaly and obstruction,
a revision of [FOOO00], 2006.

\bibitem[FOOO08]{fooo08} Fukaya, K., Oh, Y.-G., Ohta, H., Ono, K.,
Lagrangian Intersection Floer Theory - anomaly and obstruction, vol. I \& II,
to appear in AMS/IP Studies in Advanced Math., International Press.

\bibitem[HS]{HoSa95}
Hofer, H., and Salamon, D.,
{\em Floer homology and Novikov ring},
in ``The Floer Memorial Volume''
Progr. Math. \textbf{133},
ed. by H. Hofer, C. Taubes, A. Weinstein and E. Zehnder,
Birkh\"auser,
Basel
(1995)
483--524

\bibitem[KS]{KaSh90}
Kashiwara, M., and Schapira, J.P.,
Sheaves on Manifolds,
Grundlehren der Mathematischen Wissenschaften
\textbf{292}
Springer-Verlag, Berlin,
1990.

\bibitem[Ko]{konts:hms} Kontsevich, M., {\em Homological algebra of mirror symmetry},
Proceedings of the International Congress of Mathematicians, Vol. 1,
2 (Z\"uich, 1994), 120--139, Birkh\"aser, Basel, 1995.

\bibitem[N]{Nov81}
Novikov, S.,
{\em Multivalued functions and functional - an
analogue of the Morse theory},
Sov. Math. Dokl.
\textbf{24}
(1981)
222--225

\bibitem[Oh1]{oh:cyclic} Oh, Y.-G. {\em Mean curvature vector and
symplectic topology of Lagrangian submanifolds in Einstein-K\"ahler
manifolds}, Math. Z. \textbf{216} (1994), 471 -- 482.

\bibitem[Oh2]{oh:spectre}
Oh, Y.-G. {\em Construction of spectral invariants of Hamiltonian
paths on closed symplectic manifolds}, in `The Breadth of Symplectic
and Poisson Geometry', 525--570, Progr. Math., \textbf{232},
Birkh\"auser Boston, Boston, MA, 2005.

\bibitem[Oh3]{oh:seidel} Oh, Y.-G., {\em Seidel's long exact sequence on Calabi-Yau
manifolds}, in preparation.

\bibitem[On]{Ono96}
Ono, K.,
{\em On the Arnol'd conjecture for weakly monotone symplectic manifolds},
Invent. Math.
\textbf{119}
(1995)
519--537

\bibitem[Se1]{seidel:grading} Seidel, P., {\em Graded Lagrangian
submanifolds}, Bull. Soc. Math. France \textbf{128} (2000), 103--149.

\bibitem[Se2]{seidel:top} Seidel, P., {\em A long exact sequence for symplectic Floer
cohomology}, Topology \textbf{42} (2003), 1003 -- 1063.

\bibitem[V]{viterbo} Viterbo C. , {\em Intersection des sou-vari\'et\'es
lagrangians, fonctionelles d'action et indices
des system Hamiltoniennes}, Bull. Soc. Math. France \textbf{115} (1987), 361--390.

\bibitem[W]{Welschinger}
Welschinger, J.-Y., {\em Open strings, Lagrangian conductors and Floer functor},
preprint, arXiv:0812.0276.

\end{thebibliography}
\end{document}